\documentclass[a4paper,10pt]{amsart}

\usepackage{amsthm,amsmath,amssymb,amsfonts}
\usepackage{libertine}
\usepackage{libertinust1math}
\usepackage{natbib}
\usepackage{tikz-cd}
\usepackage{appendix}
\usepackage{stmaryrd}
\usepackage{quiver}
\usepackage[T1]{fontenc}
\usepackage[utf8]{inputenc}
\usepackage{verbatim}
\usepackage{marvosym}
\usepackage{stackrel}
\usepackage{graphicx}
\usepackage{eucal}
\usepackage{bbold}
\usepackage{lipsum}
\usepackage{svg}
\usepackage{bbm}
\usepackage{caption}
\usepackage{enumerate}
\usepackage{mathtools}
\usepackage{epigraph}
\usepackage{color}
\usepackage{comment}
\usepackage{microtype}
\usepackage{relsize}
\usepackage{amsfonts}
\usepackage{adjustbox}
\usepackage{subfig}
\usepackage{framed}
\usepackage{hyperref}
\usepackage{orcidlink}
\usepackage[capitalise]{cleveref}
\usepackage{trimclip}

\DeclareFontFamily{U}{min}{}
\DeclareFontShape{U}{min}{m}{n}{<-> udmj30}{}

\tikzcdset{pullback/.style = {"\lrcorner"{anchor = center, pos = 0.125}, draw = none}}
\tikzcdset{pushout/.style = {"\lrcorner"{anchor = center, rotate=180, pos = 0.125}, draw = none}}

\newtheorem{theorem}{Theorem}[section]
\newtheorem*{theorem*}{Theorem}

\newtheorem{lemma}[theorem]{Lemma}

\newtheorem{corollary}[theorem]{Corollary}

\newtheorem{proposition}[theorem]{Proposition}

\theoremstyle{definition}
\newtheorem{example}[theorem]{Example}
\newtheorem{definition}[theorem]{Definition}
\newtheorem{remark}[theorem]{Remark}

\newcommand{\mc}[1]{\mathcal{#1}}
\newcommand{\mb}[1]{\mathbf{#1}}
\newcommand{\mbb}[1]{\mathbb{#1}}

\newcommand{\rg}{\mc U}

\newcommand{\mr}[1]{\mathrm{#1}}

\newcommand{\ms}[1]{\mathsf{#1}}
\newcommand{\Ind}{\mathbf{Ind}}
\newcommand{\Pro}{\mathbf{Pro}}
\newcommand{\Set}{\mb{Set}}

\newcommand{\sub}{\mr{Sub}}

\newcommand{\Lex}{\mb{Lex}}
\newcommand{\Pos}{\mb{Pos}}
\newcommand{\Kp}{\mb{Kp}}

\newcommand{\IKp}{\mb{IKp}}

\newcommand{\Kpp}{\Kp_{*}}

\newcommand{\DL}{\mb{DL}}

\newcommand{\HA}{\mb{HA}}

\newcommand{\sh}{\mb{Sh}}
\newcommand{\psh}{\mb{Psh}}

\newcommand{\op}{^{\mathrm{op}}}
\newcommand{\inv}{^{\mathrm{-1}}}
\newcommand{\pf}[1]{\widehat{#1}}
\newcommand{\qsi}[1]{\widetilde{#1}}

\newcommand{\geo}[1]{\left|#1\right|}
\newcommand{\ov}[1]{\overline{#1}}
\newcommand{\set}[1]{\{\,#1\,\}}
\newcommand{\eff}{\Leftrightarrow}
\newcommand{\conjt}{\;\&\;}
\newcommand{\pair}[1]{\left\langle#1\right\rangle}

\newcommand{\elem}{\int\!\!}
\newcommand{\nt}{\Rightarrow}
\newcommand{\scomp}[2]{\{\,#1\mid#2\,\}}
\newcommand{\yon}{\mathtt{y}}
\newcommand{\surj}{\twoheadrightarrow}
\newcommand{\inj}{\rightarrowtail}
\newcommand{\hook}{\hookrightarrow}

\newcommand{\fn}{_{\mr{f.}}}

\newcommand{\dv}{\operatorname{\uparrow}}

\newcommand{\fa}[2]{\forall #1\!\in\! #2.\ }
\newcommand{\ex}[2]{\exists #1\!\in\! #2.\ }

\newcommand{\uv}[1]{\underline{#1}}

\newcommand{\spec}{\operatorname{Spec}}

\newcommand{\kspec}{\operatorname{\ov{Spec}}}

\newcommand{\lan}{\ms{lan}}

\DeclareFontFamily{U}{dmjhira}{}
\DeclareFontShape{U}{dmjhira}{m}{n}{ <-> dmjhira }{}
\DeclareRobustCommand{\yon}{\text{\usefont{U}{dmjhira}{m}{n}\symbol{"48}}}

\newcommand{\um}{\Omega}

\newcommand{\plo}{\!+\!1}

\newcommand{\Stone}{\mb{Stone}}
\newcommand{\KHaus}{\mb{KHaus}}

\newcommand{\exreg}{_{\mr{ex/reg}}}

\newcommand{\Forall}{\operatorname{\ms A}}
\newcommand{\Exists}{\operatorname{\ms E}}

\makeatletter
\newcommand{\ct@}[2]{%
  \vtop{\m@th\ialign{##\cr
    \hfil$#1\operator@font lim$\hfil\cr
    \noalign{\nointerlineskip\kern1.5\ex@}#2\cr
    \noalign{\nointerlineskip\kern-\ex@}\cr}}%
}
\newcommand{\ct}{%
  \mathop{\mathpalette\ct@{\rightarrowfill@\textstyle}}\nmlimits@
}
\makeatother
\makeatletter
\newcommand{\lt@}[2]{%
  \vtop{\m@th\ialign{##\cr
    \hfil$#1\operator@font lim$\hfil\cr
    \noalign{\nointerlineskip\kern1.5\ex@}#2\cr
    \noalign{\nointerlineskip\kern-\ex@}\cr}}%
}
\newcommand{\lt}{%
  \mathop{\mathpalette\lt@{\leftarrowfill@\textstyle}}\nmlimits@
}
\makeatother

\title{Coexact completion of profinite Heyting algebras and uniform interpolation}

\author{Lingyuan Ye \orcidlink{0000-0002-8983-0099}}

\address{
Lingyuan \textsc{Ye} \newline
Department of Computer Science and Technology\newline
University of Cambridge\newline
Cambridge, UK\newline
}

\begin{document}

\begin{abstract} 
    This paper shows that the sheaf representation of finitely generated free Heyting algebras constructed in~\cite{ghilardi2013sheaves} can be factored as the \emph{profinite completion} of Heyting algebras, followed by identifying the dual category of profinite Heyting algebras as a full subcategory of a sheaf topos. We show that the dual category of profinite Heyting algebras is an infinitary extensive regular category, and its ex/reg-completion is exactly the aforementioned sheaf topos, which we refer to as the K-topos. We show how certain properties of uniform interpolation can be generalised to the context of arbitrary profinite Heyting algebras, and that they are consequences of the \emph{internal logic} of the K-topos. Along the way we also establish various topos-theoretic properties of the K-topos.    
    
    \smallskip \noindent 
    \textbf{Keywords.} uniform interpolation, intuitionistic logic, Heyting algebra, profinite completion, exact completion.
    \textbf{MSC2020.} 06D20, 03C40, 03B20.
\end{abstract}

\maketitle

   {
   \hypersetup{linkcolor=black}
   \tableofcontents
   }

\section{Introduction}\label{sec:intro}

\cite{pitts1992uniform} establishes an interpretation of \emph{second-order} propositional quantifiers into intuitionistic propositional logic (IPC), and this result is now called \emph{uniform interpolation}. Let $H[n]$ be the free Heyting algebra over $n$ generators. From an algebraic perspective, uniform interpolation means the canonical inclusion $\iota : H[n]\inj H[n\plo]$ between finitely generated free Heyting algebras admits both a left and right adjoint, 
\[\begin{tikzcd}
	{H[n]} & {H[n\plo]}
	\arrow[""{name=0, anchor=center, inner sep=0}, "\iota"{description}, tail, from=1-1, to=1-2]
	\arrow[""{name=1, anchor=center, inner sep=0}, "\Forall", curve={height=-18pt}, from=1-2, to=1-1]
	\arrow[""{name=2, anchor=center, inner sep=0}, "\Exists"', curve={height=18pt}, from=1-2, to=1-1]
	\arrow["\dashv"{anchor=center, rotate=-90}, draw=none, from=0, to=1]
	\arrow["\dashv"{anchor=center, rotate=-90}, draw=none, from=2, to=0]
\end{tikzcd}\]

The original proof presented in~\cite{pitts1992uniform} uses a careful analysis of an efficiently terminating sequent calculus for IPC due to~\cite{hudelmaier1992bounds} and~\cite{dyckhoff1992contraction}. Later on,~\cite{ghilardi1995sheaf} proves this result via a sheaf-theoretic representation of finitely presented Heyting algebras, and~\cite{visser1996uniform} gives a proof via bisimulation between Kripke semantics of IPC. More recently,~\cite{gool2018open} establishes this result by Esakia duality, showing maps between finitely presented Heyting algebras induce open maps between Esakia spaces. However, besides the different choices of framework -- sheaf theory, Kripke semantics, topology -- the proofs of uniform interpolation in these semantic approaches all crucially rely on a certain combinatorial fact about the existence of certain extensions of finite posets, which can be seen in all the aforementioned proofs besides Pitts' original proof-theoretic approach.

The original motivation of this paper is to obtain an \emph{algebraic} understanding of uniform interpolation. Our original proof strategy is quite na\"ive. Part of the difficulty of showing such adjoints exist lies in the fact that the free Heyting algebras $H[n]$ are not complete for $n \ge 2$. Thus, \emph{a priori}, one cannot use infinite joins or meets to define the adjoints. This way, if we can \emph{embed} these Heyting algebras into complete ones where the adjoints do exist, then we can prove uniform interpolation by showing that the relevant adjoints restrict to these free Heyting algebras. 

One of the natural choices is to embed them into their \emph{profinite completions}. For any finite $n$, there is a natural map $H[n] \inj \pf{H[n]}$ from the free Heyting algebra on $n$ generators to its profinite completion. This map is in fact an \emph{embedding} by the finite model property of IPC. Now $\pf{H[n]}$ is a complete lattice, and the natural extension $\pf{H[n]} \inj \pf{H[n\plo]}$ preserves both arbitrary meets and joins, thus admits both a left and right adjoint,
\[\begin{tikzcd}[row sep = large]
	{H[n]} & {H[n\plo]} \\
	{\pf{H[n]}} & {\pf{H[n\plo]}}
	\arrow[""{name=0, anchor=center, inner sep=0}, tail, from=1-1, to=1-2]
	\arrow[tail, from=1-1, to=2-1]
	\arrow[""{name=1, anchor=center, inner sep=0}, curve={height=-18pt}, dashed, from=1-2, to=1-1]
	\arrow[""{name=2, anchor=center, inner sep=0}, curve={height=18pt}, dashed, from=1-2, to=1-1]
	\arrow[tail, from=1-2, to=2-2]
	\arrow[""{name=3, anchor=center, inner sep=0}, tail, from=2-1, to=2-2]
	\arrow[""{name=4, anchor=center, inner sep=0}, "\Forall", curve={height=-18pt}, from=2-2, to=2-1]
	\arrow[""{name=5, anchor=center, inner sep=0}, "\Exists"', curve={height=18pt}, from=2-2, to=2-1]
	\arrow["\dashv"{anchor=center, rotate=-90}, draw=none, from=0, to=1]
	\arrow["\dashv"{anchor=center, rotate=-90}, draw=none, from=2, to=0]
	\arrow["\dashv"{anchor=center, rotate=-90}, draw=none, from=3, to=4]
	\arrow["\dashv"{anchor=center, rotate=-90}, draw=none, from=5, to=3]
\end{tikzcd}\]
Hence, the aim now is to show the adjoints $\Exists,\Forall : \pf{H[n\plo]} \to \pf{H[n]}$ on the level of profinite Heyting algebras restrict to maps between the generating free Heyting algebras $\Exists,\Forall : H[n\plo] \to H[n]$. 

However, when exploring this approach we have ultimately realised that, despite the apparent differences, the profinite completion is \emph{exactly} what the sheaf representation in~\cite{ghilardi1995sheaf} is implicitly performing. We call the sheaf topos constructed in \emph{loc.\ cit.}\ the \emph{K-topos}, and denote it as $\mc K$.\footnote{The reason for this name is that the generating site for $\mc K$ is the category of Kripke frames (cf.~\cref{sec:K-topos}).} To briefly summarise the connection, recall that~\cite{ghilardi1995sheaf} essentially constructs a functor from the dual category of Heyting algebras to the K-topos,
\[ \kspec : \HA\op \to \mc K, \]
such that for $A \in \HA$, there is a map from $A$ into the subobject lattice of its image in $\mc K$,
\[ A \to \sub_{\mc K}(\kspec A). \]
When $A$ is finitely presented, it is shown in \emph{loc.\ cit.}\ that this map is an embedding via the finite model property of IPC, and that its image can be characterised via a certain bisimulation game. In this case, \emph{loc.\ cit.} proves uniform interpolation by showing that the left and right adjoints of pullback functors of subobjects in $\mc K$ restrict to these sub-Heyting algebras,
\[\begin{tikzcd}[row sep = large]
	{H[n]} & {H[n\plo]} \\
	{\sub_{\mc K}(\kspec H[n])} & {\sub_{\mc K}(\kspec H[n\plo])}
	\arrow[""{name=0, anchor=center, inner sep=0}, tail, from=1-1, to=1-2]
	\arrow[tail, from=1-1, to=2-1]
	\arrow[""{name=1, anchor=center, inner sep=0}, curve={height=-18pt}, dashed, from=1-2, to=1-1]
	\arrow[""{name=2, anchor=center, inner sep=0}, curve={height=18pt}, dashed, from=1-2, to=1-1]
	\arrow[tail, from=1-2, to=2-2]
	\arrow[""{name=3, anchor=center, inner sep=0}, "\pi\inv"{description}, tail, from=2-1, to=2-2]
	\arrow[""{name=4, anchor=center, inner sep=0}, "\forall_{\pi}", curve={height=-18pt}, from=2-2, to=2-1]
	\arrow[""{name=5, anchor=center, inner sep=0}, "\exists_{\pi}"', curve={height=18pt}, from=2-2, to=2-1]
	\arrow["\dashv"{anchor=center, rotate=-90}, draw=none, from=0, to=1]
	\arrow["\dashv"{anchor=center, rotate=-90}, draw=none, from=2, to=0]
	\arrow["\dashv"{anchor=center, rotate=-90}, draw=none, from=3, to=4]
	\arrow["\dashv"{anchor=center, rotate=-90}, draw=none, from=5, to=3]
\end{tikzcd}\]
where $\pi = \kspec \iota : \kspec H[n\plo] \to \kspec H[n]$ is the image of $\iota : H[n] \inj H[n\plo]$ under the representation functor $\kspec$.

We will show that the map $A \to \sub_{\mc K}(\kspec A)$ for a general Heyting algebra is exactly the canonical map associated to its \emph{profinite completion}, thus our original proof strategy via profinite completion is essentially the same as the sheaf representation proof. Categorically, this can be observed by factorising the $\kspec$ functor as the following composition of adjunctions, 
\[\begin{tikzcd}
	{\mc K} & {\Pro(\HA\fn)\op} & {\HA\op}
	\arrow[""{name=0, anchor=center, inner sep=0}, "{\sub_{\mc K}(-)}", curve={height=-18pt}, from=1-1, to=1-2]
	\arrow[""{name=1, anchor=center, inner sep=0}, hook', from=1-2, to=1-1]
	\arrow[""{name=2, anchor=center, inner sep=0}, curve={height=-18pt}, tail, from=1-2, to=1-3]
	\arrow["\kspec", curve={height=-18pt}, from=1-3, to=1-1]
	\arrow[""{name=3, anchor=center, inner sep=0}, "{\pf-}"{description}, from=1-3, to=1-2]
	\arrow["\dashv"{anchor=center, rotate=-90}, draw=none, from=0, to=1]
	\arrow["\dashv"{anchor=center, rotate=-90}, draw=none, from=2, to=3]
\end{tikzcd}\]
Here the dual category $\Pro(\HA\fn)\op$ of profinite Heyting algebras is a reflective subcategory of $\mc K$, where the reflection is exactly given by taking the subobject lattice; the adjunction between $\HA$ and $\Pro(\HA\fn)$ consists of the forgetful functor and the profinite completion. Thus, the counit of the composite adjunction is exactly the profinite completion,
\[ A \to \sub_{\mc K}(\kspec A) \cong \pf A. \]
In fact, we will also observe that the sheaf representation or profinite completion is also equivalent to Bellissima's representation of Heyting algebras given in~\cite{bellissima1986finitely}. These results will be discussed in~\cref{sec:nerveonK}, where we show they are all special cases of a general \emph{nerve construction} over the K-topos $\mc K$.

Once one realises the connection between profinite completion and the sheaf representation of Heyting algebras, it is natural to study how properties of profinite Heyting algebras, or in fact the K-topos, relate to uniform interpolation. In fact, there are two natural generalisations of properties of uniform interpolation: 
\begin{itemize}
    \item It follows from this observation that the adjoints for uniform interpolation are obtained via restriction of adjoints of maps between profinite Heyting algebras. Thus, one may naturally study uniform interpolation by studying these more general adjoints. In fact, most of the known logical properties of uniform interpolants as second-order propositional quantifiers hold more generally for the adjoints between profinite Heyting algebras. 
    \item Via the embedding $\Pro(\HA\fn)\op \hook \mc K$, we can in fact \emph{internalise} the adjoints between profinite Heyting algebras as actual maps between the corresponding objects in $\mc K$. One may then ask whether the \emph{external properties} of these adjoints hold \emph{internally} in the K-topos $\mc K$. 
\end{itemize}
We emphasise that the internal properties in the K-topos are the strongest form. Properties about adjoints between profinite Heyting algebras will follow from the externalisation of these internal properties, and they in turn imply properties of uniform interpolation via restriction to the generating finitely generated free Heyting algebras. 

With this perspective, in~\cref{sec:internaluniform} we will study two important and well-known properties of uniform interpolation for IPC:
\begin{enumerate}
    \item The left uniform interpolant $\Exists$ satisfies a dual Frobenius property, which is also known in constructive mathematics as the principle of \emph{independence of premise}:
    \[ \Exists(\iota\varphi \to \psi) = \varphi \to \Exists\psi. \]
    \item The right uniform interpolant $\Forall$ preserves finite joins,
    \[ \Forall(\bigvee_{i\in n}\varphi_i) = \bigvee_{i\in n}\Forall\varphi_i. \]
\end{enumerate}
We will show that they follow from the internal properties of $\mc K$ that (1) propositions in $\mc K$ are \emph{internally projective} and the subobject classifier $\Omega$ is \emph{internally injective} (cf.~\cref{internallyinjective}); and (2) the subobject classifier $\Omega$ is \emph{internally irreducible} (cf.~\cref{irreduciblesubobj}). Not only does it provide an alternative understanding of these logical principles of uniform interpolation as geometric properties of sheaves in the K-topos, but as a corollary it also implies that these two properties hold much more generally for all relevant adjoints between profinite Heyting algebras (cf.~\cref{dualfrobenius,rightadjlocal}).

This way, a general investigation of profinite Heyting algebras and the K-topos is desirable for studying uniform interpolation of IPC. From a categorical perspective, we further show that the category of profinite Heyting algebras has a much closer connection with the K-topos, besides being a reflective subcategory. The main categorical characterisation can be summarised as follows:

\begin{theorem*}[\ref{IKppreservescoproducts}, \ref{IKppreservesregularepi}, \ref{IKppreservefiltercolim}, \ref{maintheorem}]
    The reflective subcategory $\Pro(\HA\fn)\op \hook \mc K$ satisfies the following properties:
    \begin{itemize}
        \item The embedding is accessible, i.e. it preserves filtered colimits; 
        \item The subcategory is closed under small coproducts and image factorisation;
        \item This embedding identifies $\mc K$ as the ex/reg-completion of $\Pro(\HA\fn)\op$.
    \end{itemize}
\end{theorem*}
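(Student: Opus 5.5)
Following the footnote, I take $\mc K$ to be the sheaf topos on the category $\Kp$ of finite Kripke frames (finite posets with p-morphisms), with a coverage by jointly surjective families. The first step is to identify the embedding $\Pro(\HA\fn)\op \hook \mc K$ concretely. Finite Heyting algebras are dual to finite posets, so $\HA\fn\op \simeq \Kp$, and hence $\Pro(\HA\fn)\op \simeq \Ind(\Kp)$. The embedding sends a filtered colimit $\ct_i P_i$ of finite posets to $\ct_i \yon P_i$, computed in $\mc K$. I first check that representables are sheaves. A finite poset $P$ is a finite coproduct of rooted posets, and the covering families on rooted frames are generated by surjective p-morphisms. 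This should make $\yon P$ a sheaf, and also make the functor $\Kp \to \mc K$ preserve finite coproducts. With this description, \emph{accessibility} is immediate: filtered colimits in $\Ind(\Kp)$ are formal, and the extension of $\yon$ is by definition the filtered-colimit-preserving one.

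For \emph{coproducts}, I write an arbitrary coproduct as a filtered colimit of finite coproducts. Finite coproducts of finite posets are preserved, and filtered colimits are preserved by the previous step. For \emph{image factorisation}, I first treat a map between finite posets. Its image is a p-morphic image $Q' \subseteq Q$, which is an up-set of $Q$. The epi $P \to Q'$ is a cover in the topology, and the mono $Q' \hook Q$ is a mono of representables. So the image in $\mc K$ is again representable. For a general map of Ind-objects, I present it as a filtered colimit of maps between finite posets. Images in a topos commute with filtered colimits, because filtered colimits in a topos are exact. Hence the image lies in the subcategory.

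For the \emph{ex/reg-completion}, write $\mc C = \Pro(\HA\fn)\op$. The previous steps show $\mc C$ is regular and closed in $\mc K$ under finite limits and images. I would check that $\mc C$ has finite limits, being dual to the complete category $\Pro(\HA\fn)$, and that the embedding preserves them; this uses that $\Kp$ has pullbacks and $\yon$ preserves them. I then apply the standard recognition criterion for $\mc K \simeq \mc C\exreg$: $\mc K$ is exact, and the embedding is a fully faithful regular functor. It remains to show two covering properties. First, every object of $\mc K$ is a quotient of an object of $\mc C$. This holds since every sheaf is a quotient of a coproduct of representables, and $\mc C$ is closed under coproducts. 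Second, every subobject in $\mc K$ of an object of $\mc C$ is an image of a map from an object of $\mc C$. This follows by covering the subobject with a coproduct of representables and composing with the inclusion. Together these give that every object is a coequalizer of an equivalence relation whose domain is covered by objects of $\mc C$.

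The main obstacle I expect is the claim that the embedding preserves finite limits and regular epis beyond the finite case. A surjection of Ind-objects must be shown to become an effective epi in $\mc K$, and pullbacks in $\Ind(\Kp)$ must match those in sheaves. For this I plan to use that regular epis in $\Ind(\Kp)$ are filtered colimits of surjective p-morphisms. I also need that the sheafified colimit of covers is an epi. This is where the detailed description of the topology is needed, rather than formal arguments.
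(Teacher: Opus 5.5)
\paragraph{Main gap: accessibility.} Your argument for the first bullet is circular, and your coproduct and image steps inherit the problem because they reduce to finite posets through it.

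The embedding in the statement is the fixed reflective inclusion, namely the nerve $N X = \IKp(-,X)$. You cannot simply declare it to be the filtered-colimit-preserving extension of $\yon$. That extension, $\ct_i P_i \mapsto \ct_i \yon P_i$ computed in $\mc K$, is fully faithful and agrees with $N$ only if $(\ct_i \yon P_i)(P) \cong \ct_i \Kp(P,P_i)$ for every finite $P$. In other words, the pointwise (presheaf) filtered colimit of representables must already be a $J$-sheaf, so that sheafification changes nothing. That is exactly the content of accessibility, and your proposal never addresses it.

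The missing ingredient is the one the paper uses:
\begin{itemize}
\item under $\IKp \simeq \Lex(\Kp\op,\Set)$, filtered colimits are computed pointwise;
\item filtered colimits of left exact functors are left exact;
\item left exact functors $\Kp\op \to \Set$ are $J$-sheaves. Each cover of $P$ contains a finite jointly surjective family, and $P$ is the colimit in $\Kp$ of the spans $Q_i \leftarrow Q_i \otimes_P Q_j \to Q_j$ (\cref{leftexactfuncaresheaf}).
\end{itemize}
Equivalently, $N(\ct_k X_k)(P) \cong \ct_k \IKp(P,X_k)$ by finite presentability of $P$ in $\Ind(\Kp)$. Since the left side is a sheaf, this pointwise colimit is the colimit in $\mc K$.

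\paragraph{Finite limits and regularity.} Your plan here would also fail. $\Kp$ has neither pullbacks nor binary products, as the paper notes right after \cref{creatingfincolimkp}, so ``$\Kp$ has pullbacks and $\yon$ preserves them'' is false. The step is unnecessary anyway: the inclusion is a right adjoint, so it preserves all limits.

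Together with full faithfulness and closure under images, this makes $\mc C$ regular and the inclusion a regular functor. Indeed, a regular epi $e : A \to B$ of $\mc C$ factors through its $\mc K$-image $\mr{im}(e) \hook B$. That image lies in $\mc C$ by image closure, so this mono is an iso and $e$ stays epi in $\mc K$. Alternatively, the paper shows directly that surjections admit local sections, using the monoidal pullback $P \otimes_{\dv x(*)} \dv y$.

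\paragraph{A smaller error.} A finite poset is not a finite coproduct of rooted posets: two minimal elements below a common top give a connected, unrooted poset. It is only covered by the up-sets $\dv p$. Representables are sheaves because $J$ is canonical (\cref{canonicaltopology}). $\yon$ preserves finite coproducts because the coproduct injections form a disjoint cover.

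\paragraph{Comparison once repaired.} With these fixes, your route is a genuine alternative to the paper's:
\begin{itemize}
\item for coproducts, the paper computes them pointwise on $\Kpp$, since a map from a rooted poset lands in a single summand; you instead reduce to finite coproducts via filtered colimits;
\item for subobjects, the paper gets closure from $\Omega \in \IKp$ plus closure under limits; you instead cover a subobject by $\coprod \yon P$ and take its image;
\item for the ex/reg-completion, the paper proves its own recognition result, \cref{lem:subobjectexreg}. Given full faithfulness and preservation of images, it is equivalent to the criterion you invoke.
\end{itemize}
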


These results in particular imply various exactness properties of profinite Heyting algebras. For instance, $\Pro(\HA\fn)\op$ will be regular. Dually, this means injections between profinite Heyting algebras are closed under pushouts, or in other words profinite Heyting algebras have the strong amalgamation property. $\Pro(\HA\fn)$ will also be infinitary coextensive. Dually, this means pushouts of profinite Heyting algebras preserve arbitrary products. In fact, $\Pro(\HA\fn)\op$ as a full subcategory contains the subobject classifier of $\mc K$. Dually, this means profinite Heyting algebras have a classifier of quotient objects. In fact, from a categorical perspective, $\Pro(\HA\fn)\op$ is quite close to being a topos. The only missing part is effective quotients. Though $\Pro(\HA\fn)\op$ is cocomplete (as it will be locally finitely presentable), the quotients of equivalence relations in $\Pro(\HA\fn)\op$ are not all \emph{effective} (cf.~\cref{notexact}). If we formally add effective quotients to $\Pro(\HA\fn)\op$, we will get the K-topos $\mc K$.

The structure of this paper is outlined as follows. In~\cref{sec:K-topos} we will briefly recall the construction of the K-topos. \cref{sec:property-K} will establish some topos-theoretic properties of the K-topos. \cref{sec:nerveonK} will describe the nerve construction over the K-topos, and through this explain the connection with the sheaf representation of Heyting algebras. \cref{sec:internaluniform} will investigate the internal language of $\mc K$, and use that to establish properties of adjoints of maps between profinite Heyting algebras. \cref{sec:subtopoi} will slightly digress from the main topic and classify all subtopoi of $\mc K$. \cref{sec:exactcomp} will prove various exactness properties of the category of profinite Heyting algebras, and furthermore show $\mc K$ is the ex/reg-completion of the dual category of profinite Heyting algebras.

\section{The K-topos as sheaves on Kripke frames}\label{sec:K-topos}

Let $\Pos\fn$ be the category of finite posets. It has already been established in~\cite{birkhoff1937rings} that there is a duality between finite distributive lattices and finite posets,
\[ \spec : \DL\fn\op \simeq \Pos\fn : \mc U, \]
where $\spec D = \DL(D,2)$ is the poset of models of a finite distributive lattice, and $\mc UP$ takes a finite poset $P$ to the distributive lattice of upward closed subsets of $P$. 

Each finite distributive lattice is in fact a complete lattice, and distributivity implies it will be a Heyting algebra (also a co-Heyting algebra). In this case, it is well-known that a map between finite posets $f : P \to Q$ dually corresponds to a Heyting algebra morphism iff it is \emph{open} (see e.g.~\cite[Thm. 2.1]{ghilardi2013sheaves}): for any $p\in P$ and $fp \le q$ in $Q$, there exists $p\le p'$ that $fp' = q$. We let $\Kp$ be the category of finite posets and open maps between them. This way, we have a duality
\[ \spec : \HA\fn\op \simeq \Kp : \mc U, \]
where $\HA\fn$ is the category of finite Heyting algebras. Note that since $\HA\fn$ has finite limits, the dual category $\Kp$ has finite colimits.

We thus view $\Kp$ as the category of Kripke frames. The topos $\mc K$ will be the category of sheaves on $\Kp$ for some suitable topology. To construct this topology, we first establish some categorical properties of $\Kp$.

\begin{lemma}\label{creatingfincolimkp}
  The inclusion $\Kp \inj \Pos\fn$ preserves finite colimits.
\end{lemma}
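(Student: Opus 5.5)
It suffices to show that the initial object, binary coproducts and coequalizers computed in $\Pos\fn$ are already colimits in $\Kp$. For this I will check two things: that the colimit cocone maps are open, and that whenever the legs of a cocone are open maps, the induced comparison map out of the $\Pos\fn$-colimit is open. A small reduction makes both checks easier. A monotone map $f : P\to Q$ is open iff $f(\uparrow p) = \uparrow f(p)$ for every $p$. So openness only concerns the images of principal up-sets.

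\emph{Initial object and coproducts.} The empty poset is initial in both categories, since the unique map out of it is vacuously open. The coproduct $P+Q$ in $\Pos\fn$ is the disjoint union. Each inclusion is open, because every element above an element of $P$ lies in $P$. Now let $f : P\to R$ and $g : Q\to R$ be open. The induced map $[f,g]$ sends $\uparrow x$ to $\uparrow f(x)$ or $\uparrow g(x)$, depending on which summand $x$ lies in, so $[f,g]$ is open.

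\emph{Coequalizers.} This is the main step. Let $f,g : P\rightrightarrows Q$ be open maps. In $\Pos\fn$ the coequalizer is obtained in two stages. First take the equivalence relation $\sim$ on $Q$ generated by $f(p)\sim g(p)$. Then equip $Q/{\sim}$ with the transitive closure of the relation $[a]\le[b]$ iff $a\le b$, and collapse the resulting preorder to a poset by a further quotient. Write $q : Q\to C$ for the composite.

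\emph{Key claim.} For any $a\in Q$ and any $c\ge q(a)$ in $C$, there is $a'\ge a$ with $q(a') = c$. This is exactly the openness of $q$, and I will prove it in two parts.
\begin{enumerate}
\item Take $a\sim b$ with $b\le b'$. Then some $a'\ge a$ satisfies $a'\sim b'$. It suffices to check a single generating step, say $a=f(p)$ and $b=g(p)$. Openness of $g$ gives $p'\ge p$ with $g(p') = b'$. Then $a' := f(p')\ge f(p)=a$, and $a' = f(p')\sim g(p') = b'$. The case $a = g(p)$, $b = f(p)$ is symmetric. A general zigzag is handled by induction on its length.
\item A relation $q(a)\le c$ in $C$ is witnessed by a chain $a = a_0\le b_0\sim a_1\le b_1\sim\cdots$. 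Repeatedly applying part 1 lifts each $\sim$-step upward, with inequalities composing. The collapse in the last stage only identifies elements that are $\le$ each other in both directions, so it introduces no new up-steps. This yields the claim.
\end{enumerate}
As a byproduct, the two-stage construction is simpler than it first appears. The order on $Q/{\sim}$ is just $[a]\le[b]$ iff $a\le b'$ for some $b'\sim b$. This relation is transitive by part 1, and antisymmetric because the preimage of an up-closed set under the open map $q$ is up-closed.

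\emph{Universal property in $\Kp$.} Let $h : Q\to R$ be open with $hf = hg$, and let $\bar h : C\to R$ be the induced monotone map. Given $\bar h(q(a))\le r$, openness of $h$ gives $a'\ge a$ with $h(a') = r$. Then $q(a')\ge q(a)$ and $\bar h(q(a')) = r$, so $\bar h$ is open.

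Finally, every finite colimit is built from the initial object, binary coproducts and coequalizers. Hence the inclusion preserves all finite colimits, and these colimits exist in $\Kp$, which is consistent with the duality with $\HA\fn$.
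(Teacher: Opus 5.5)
Your proof is correct. Its core is the same zigzag-lifting step as the paper's: your Part 1 is exactly the paper's first observation about $\theta$. You organise the coequaliser argument differently, though.

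\textbf{The paper's route.} It defines the order on the quotient directly: $[p]\le[q]$ iff $p\le p'\mathrel{\theta} q$ for some $p'$. It checks well-definedness and transitivity by lifting. It then proves antisymmetry using finiteness: an alternating ascending chain between the two classes must stabilise. Openness of the quotient map is built into this definition of the order. The universal property in $\Kp$ is only asserted (``hence also in $\Kp$'').

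\textbf{Your route.} You start from the generic $\Pos$ coequaliser, i.e.\ the poset reflection of the generated preorder. You never need to know that the reflection step is trivial. You prove openness of $q$ by pushing a witnessing chain upward, and you check explicitly that the mediating map is open.

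\textbf{What each buys.} Finiteness is never used in your coequaliser step. So your argument applies verbatim to arbitrary posets and open maps, for instance to $\IKp\inj\Pos$ as invoked in the proof of \cref{nerveforposet}; there image-finiteness of the quotient follows from $\dv q(a)=q(\dv a)$. The paper's route gives the sharper, explicit description of the coequaliser as the quotient by the generated equivalence relation itself, with no collapse. The paper also gives a one-line conceptual proof by duality, using that the forgetful functor $\HA\to\DL$ is a right adjoint.

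\textbf{A caution about your ``byproduct'' remark.} You claim the relation on $Q/{\sim}$ is already antisymmetric. This is true for finite $Q$, but your justification does not prove it. Preimages of up-sets are up-closed under any monotone map. That fact says nothing about whether the collapse merges distinct $\sim$-classes.

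The claim genuinely needs finiteness. Take $\mathbb{N}$ with the open maps $f=\mathrm{id}$ and $g(n)=n+2$. The classes of $0$ and $1$ are distinct, yet $0\le 1$ and $1\le 2\sim 0$. The correct argument is the paper's chain-stabilisation one.

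Your main proof never uses this remark, so the proof of the lemma stands.
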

\begin{proof}
  From a categorical perspective, the forgetful functor from Heyting algebras to distributive lattices $\HA\to\DL$ is a right adjoint (cf.~\cite{lawvere1963functorial}). Hence, when restricted to finite algebras, $\HA\fn \inj \DL\fn$ preserves finite limits, which implies that dually the inclusion $\Kp \inj \Pos\fn$ preserves finite colimits. However, for future reference an explicit proof will also be useful.

  $\Kp \inj \Pos\fn$ evidently preserves finite coproducts. For coequalisers, suppose we have a parallel pair $f_0,f_1 : R \to P$. We define an equivalence relation $\theta$ on $P$ which is the smallest one containing $\theta_0 \coloneq \scomp{(f_ir,f_jr)}{r\in R \conjt i,j\in\set{0,1}}$. We first show that for any $p\theta q$, if $p \le p'$ then there exists $q \le q'$ that $p'\theta q'$. By definition, $p\theta q$ iff we have $r_1,\cdots,r_n$ such that $p = f_{i_1}r_1$, $f_{j_k}r_k = f_{i_{k\plo}}r_{k\plo}$ and $f_{j_n}r_n = q$ for some $i_k,j_k \in \set{0,1}$ as shown by the bottom row below,
    \[\begin{tikzcd}
    	{p' = f_{i_1}r_1'} & {f_{j_1}r_1' = f_{i_2}r_2'} & {f_{j_2}r_2' = \cdots} & {\cdots = f_{j_n}r_n' =: q'} \\
    	{p = f_{i_1}r_1} & {f_{j_1}r_1 = f_{i_2}r_2} & {f_{j_2}r_2 = \cdots} & {\cdots = f_{j_n}r_n = q}
    	\arrow["\theta_0", dashed, tail reversed, from=1-1, to=1-2]
    	\arrow["\theta_0", dashed, tail reversed, from=1-2, to=1-3]
    	\arrow["\theta_0", dashed, tail reversed, from=1-3, to=1-4]
    	\arrow[from=2-1, to=1-1]
    	\arrow["\theta_0"', tail reversed, from=2-1, to=2-2]
    	\arrow[dashed, from=2-2, to=1-2]
    	\arrow["\theta_0"', tail reversed, from=2-2, to=2-3]
    	\arrow[dashed, from=2-3, to=1-3]
    	\arrow["\theta_0"', tail reversed, from=2-3, to=2-4]
    	\arrow[dashed, from=2-4, to=1-4]
    \end{tikzcd}\]
  Now given $p \le p'$, by $f_{i_1}$ being open, there is $r_1'$ that $p' = f_{i_1}r_1'$. Hence $f_{i_2}r_2 = f_{j_1}r_1 \le f_{j_1}r_1'$, which implies we can find $r_2'$ that $f_{i_2}r_2' = f_{j_1}r_1'$, and so on and so forth. Thus, we can define $q'$ to be $f_{j_n}r_n'$, and by construction $p'\theta q'$ and $q \le q'$. 
  
  This way, we can define a unique order on equivalence classes of $\theta$,
  \[ [p]\le [q]:= \ex {p'}P p \le p' \conjt p' \theta q. \]
  Firstly, this relation is well-defined: suppose $p\theta r$ and $q \theta s$, and we have $p \le p'$ that $p' \theta q$. Then as shown below, we can find $r \le r'$ that $r'\theta p'$ and thus $r' \theta s$,
    \[\begin{tikzcd}
    	{r'} & {p'} & q & s \\
    	r & p
    	\arrow["\theta", dashed, tail reversed, from=1-1, to=1-2]
    	\arrow["\theta", tail reversed, from=1-2, to=1-3]
    	\arrow["\theta", tail reversed, from=1-3, to=1-4]
    	\arrow[dashed, from=2-1, to=1-1]
    	\arrow["\theta"', tail reversed, from=2-1, to=2-2]
    	\arrow[from=2-2, to=1-2]
    \end{tikzcd}\]
  Hence, the definition does not depend on the representative we choose.

  Furthermore, we show the relation is a partial order on equivalence classes. Reflexivity is trivial. Transitivity holds by the following diagram,
    \[\begin{tikzcd}
    	& {p''} & {q'} & r \\
    	p & {p'} & q
    	\arrow["\theta", dashed, tail reversed, from=1-2, to=1-3]
    	\arrow["\theta", tail reversed, from=1-3, to=1-4]
    	\arrow[from=2-1, to=2-2]
    	\arrow[dashed, from=2-2, to=1-2]
    	\arrow["\theta"', tail reversed, from=2-2, to=2-3]
    	\arrow[from=2-3, to=1-3]
    \end{tikzcd}\]
  For anti-symmetry, suppose $[p] \le [q]$ and $[q] \le [p]$. Then we can construct the following diagram,
    \[\begin{tikzcd}
    	& \vdots && \vdots \\
    	& {p_1} & {p_0} & p \\
    	p & {q_0} & q
    	\arrow[dashed, from=2-2, to=1-2]
    	\arrow["\theta", dashed, tail reversed, from=2-2, to=2-3]
    	\arrow["\theta", tail reversed, from=2-3, to=2-4]
    	\arrow[from=2-4, to=1-4]
    	\arrow[from=3-1, to=3-2]
    	\arrow[dashed, from=3-2, to=2-2]
    	\arrow["\theta"', tail reversed, from=3-2, to=3-3]
    	\arrow[from=3-3, to=2-3]
    \end{tikzcd}\]
  In other words, we have a chain 
  \[ p \le q_0 \le p_1 \le q_1 \le p_2 \le \cdots \]
  and each $p_n \theta p$ and $q_n \theta q$. Since $P$ is \emph{finite}, this chain must stabilise at some point, say $p_n = q_{n+1}$. Then we have $p \theta p_n = q_{n+1} \theta q$, which means $[p]=[q]$. Thus, this is a partial order. Finally, it is then easy to see that the map $\pi : P \surj P/\theta$ is the coequaliser of $f_0,f_1$ in $\Pos$, hence also in $\Kp$.
\end{proof}

However, $\Kp$ does not admit all finite limits. It has a terminal object $1$, since every monotone map into $1$ is automatically open, but $\Kp$ does not in general admit pullbacks, or even products. However, the pullback operation in $\Pos\fn$ does land in $\Kp$:

\begin{lemma}\label{monoidalpullback}
    Let $f : P \to Q$ be an open map between posets. For any $g : R \to Q$, let $R \otimes_Q P$ be the pullback in $\Pos$. In this case, $h$ is open as well.
    \[
    \begin{tikzcd}
        R \otimes_Q P \ar[r, "u"] \ar[d, "h"'] & P \ar[d, "f"] \\
        R \ar[r, "g"'] & Q
    \end{tikzcd}
    \]
\end{lemma}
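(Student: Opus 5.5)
The plan is to verify openness of $h$ directly from the explicit description of pullbacks in $\Pos$. Concretely, $R \otimes_Q P$ is the set
\[ \scomp{(r,p)\in R\times P}{gr = fp}, \]
ordered componentwise, with $h$ and $u$ the two projections. Monotonicity of $h$ is immediate, so only openness needs an argument.

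For openness, take $(r,p)\in R\otimes_Q P$ and some $r'\in R$ with $h(r,p) = r \le r'$. We must produce $(r'',p'')\in R\otimes_Q P$ with $(r,p)\le(r'',p'')$ and $h(r'',p'') = r'$. This forces $r'' = r'$, so we need $p'' \ge p$ with $fp'' = gr'$. Since $g$ is monotone, $fp = gr \le gr'$. Openness of $f$ then gives some $p'' \ge p$ with $fp'' = gr'$. Hence $(r',p'')$ lies in the pullback, sits above $(r,p)$ componentwise, and is sent by $h$ to $r'$, as required.

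There is no real obstacle here. The only point worth flagging is that the argument uses only monotonicity of $g$, not openness. This matters because the statement is phrased for an arbitrary monotone $g$, and it is the reason the authors write $\otimes$ rather than claim a pullback in $\Kp$. Finiteness is likewise not needed: if $P,Q,R$ are finite, so is $R\otimes_Q P$, and the lemma holds verbatim for arbitrary posets.

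If $g$ is also open, the symmetric argument shows $u$ is open. So $R\otimes_Q P$ together with $h$ and $u$ forms a commutative square in $\Kp$, although it need not be a pullback there.
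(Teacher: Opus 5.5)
Your proof is correct and is essentially the paper's argument. Given $(r,p)$ in the pullback and $r \le r'$, you use $fp = gr \le gr'$ and openness of $f$ to find $p' \ge p$ with $fp' = gr'$, so that $(r',p')$ witnesses openness of $h$; your side remarks (only monotonicity of $g$ is used, finiteness plays no role, and $u$ is open when $g$ is) are also accurate.
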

\begin{proof}
    Take any pair $(r,p) \in R \otimes_Q P$. Suppose $h(r,p) = r \le r'$ in $R$. Then we have $fp = gr \le gr'$ in $Q$. By openness of $f$, there exists $p'\in P$ that $fp' = gr'$. By construction, $(r',p') \in R\otimes_Q P$, $(r,p) \le (r',p')$, and $h(r',p') = r'$. Hence, $h$ is open.
\end{proof}

\cref{monoidalpullback} implies that there is a symmetric semi-cartesian monoidal structure $\otimes$ on $\Kp$, which is simply given by the cartesian product of posets. In fact, $\otimes_Q$ will also provide a monoidal structure on the slice category $\Kp/Q$, and we call this the \emph{monoidal pullback}. This operation will play an important role in defining the topology on $\Kp$.


With these elementary results in place, we now recall the Grothendieck topology $J$ on $\Kp$ introduced in~\cite[Sec. 4.2]{ghilardi2013sheaves}:

\begin{definition}[The Grothendieck topology on Kripke frames]\label{topologyonKp}
  For $P\in\Kp$, a sieve $S$ on $P$ is $J$-covering iff it is jointly surjective.
\end{definition}

Given the monoidal pullback, seeing this is a well-defined topology is not so hard:

\begin{lemma}\label{topology}
  The set of sieves $J$ specified in~\cref{topologyonKp} is a Grothendieck topology on $\Kp$.
\end{lemma}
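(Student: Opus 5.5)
We check the three axioms of a Grothendieck topology directly: the maximal sieve covers, covers are stable under pullback, and covers are local (transitivity). The maximal sieve on $P$ contains $\mr{id}_P$, so it is jointly surjective. Transitivity is also immediate. Suppose $S$ is a jointly surjective sieve on $P$, and $R$ is a sieve on $P$ such that for every $f : Q \to P$ in $S$ the pulled-back sieve $f^*R = \scomp{g : Q' \to Q}{fg \in R}$ is jointly surjective on $Q$. Given $p \in P$, pick $f : Q \to P$ in $S$ and $q$ with $fq = p$. Then pick $g : Q' \to Q$ in $f^*R$ and $q'$ with $gq' = q$. Now $fg \in R$ hits $p$, so $R$ is jointly surjective.

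The only substantive axiom is stability under pullback. Here $\Kp$ lacks pullbacks, so we must use the sieve-theoretic form: for $S$ covering $P$ and any open $g : R \to P$, the sieve $g^*S = \scomp{h : R' \to R}{gh \in S}$ must be jointly surjective on $R$. This is where \cref{monoidalpullback} enters. Fix $r \in R$. Since $S$ is jointly surjective, choose $f : Q \to P$ in $S$ and $q \in Q$ with $fq = gr$. Form the poset pullback $R \otimes_P Q$ with projections $h : R\otimes_P Q \to R$ and $u : R\otimes_P Q \to Q$.

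By \cref{monoidalpullback}, since $f$ is open, $h$ is open. Symmetrically, since $g$ is open, $u$ is open. Both are monotone, and $R\otimes_P Q$ is a finite poset, so this is a commuting square in $\Kp$. Hence $gh = fu \in S$, because $S$ is a sieve and is closed under precomposition. So $h \in g^*S$, and $h(r,q) = r$, which shows $g^*S$ is jointly surjective.

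The main point to get right is the openness of both projections, and this is exactly what the monoidal pullback lemma supplies. Once that is in place, each axiom reduces to a pointwise surjectivity check.
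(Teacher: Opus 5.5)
Your proof is correct and follows the same route as the paper: the identity for the maximal sieve, the poset pullback $R\otimes_P Q$ for stability, and composition of jointly surjective families for transitivity. You also make explicit a point the paper leaves implicit. Openness of $g$ makes the projection $R\otimes_P Q \to Q$ open, so the square genuinely lives in $\Kp$ and $fu \in S$.
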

\begin{proof}
  Maximal sieves contain the identity, hence are covering sieves. Suppose we have a covering sieve $S$ on $P$. For any $g : R \to P$ in $\Kp$ and $f : Q \to P$ in $S$, consider the monoidal pullback in $\Kp$,
  \[
  \begin{tikzcd}
    R \otimes_P Q \ar[d, "h"'] \ar[r, "g"] & Q \ar[d, "f"] \\
    R \ar[r, "g"'] & P
  \end{tikzcd}
  \]
  This shows that $h\in g^*S$. Since the monoidal pullback is simply the pullback of sets, $g^*S$ will also be jointly surjective. Finally, jointly surjective families are evidently composable.
\end{proof}

$J$ is in fact the \emph{canonical topology} on $\Kp$, as observed in~\cite[Prop. 4.7]{ghilardi2013sheaves}.

\begin{proposition}\label{canonicaltopology}
  The following are equivalent for a sieve $S$ on $P$ in $\Kp$:
  \begin{enumerate}
    \item $S$ is $J$-covering;
    \item $S$ is effectively epimorphic;
    \item $S$ is universally effectively epimorphic.
  \end{enumerate}
  In particular, $J$ is the canonical topology on $\Kp$.
\end{proposition}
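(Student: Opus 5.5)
We prove the cycle $(3)\Rightarrow(2)\Rightarrow(1)\Rightarrow(3)$. The implication $(3)\Rightarrow(2)$ is trivial, since universally effectively epimorphic sieves are in particular effectively epimorphic. Once the equivalence is established, the final sentence follows. The canonical topology is the largest topology all of whose covering sieves are universally effectively epimorphic, so by $(1)\Leftrightarrow(3)$ and \cref{topology} it coincides with $J$.

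For $(2)\Rightarrow(1)$, suppose $S$ is effectively epimorphic on $P$ but not jointly surjective. Let $P'\subseteq P$ be the union of the images of the maps in $S$. Images of open maps are upward closed, so $P'$ is an up-set. We then test the colimit property against a suitable object of $\Kp$ to obtain a contradiction.

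The natural test object is $2=\set{0<1}$, together with the characteristic-type map $\chi:P\to 2$ sending $P'$ to $1$ and its complement to $0$. We must check this map is open. If $\chi p=0\le 1$, we need some $p'\ge p$ in $P'$. This can fail, so we choose the test object more carefully. Instead take the poset $T=\set{0,1}$ with the discrete order, or better still $P$ glued with a copy. Concretely, let $Q=P\sqcup_{P'}P$ be the pushout in $\Pos\fn$ of two copies of $P$ along the up-set $P'$. The two inclusions $P\to Q$ are open, because any element above an element of $P$ in $Q$ already lies in that copy of $P$ or in $P'$. These two maps agree on every member of $S$ but differ on $P\setminus P'$. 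This contradicts the uniqueness part of $S$ being epimorphic, hence effectively epimorphic.

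For $(1)\Rightarrow(3)$, \cref{topology} shows that jointly surjective sieves are stable under pullback, so it suffices to show that every jointly surjective sieve is effectively epimorphic. Since $P$ is finite, we may pick finitely many $f_i:Q_i\to P$ in $S$ that are jointly surjective. Given a compatible family $g_f:\mathrm{dom} f\to X$ for $f\in S$, we define $g:P\to X$ by $g(p)=g_{f_i}(q)$ for any $q$ with $f_iq=p$. This is well-defined: for two preimages $q\in Q_i$, $q'\in Q_j$, the monoidal pullback $Q_i\otimes_P Q_j$ of \cref{monoidalpullback} maps to $P$ openly, lies in $S$, and contains $(q,q')$, so compatibility forces equality.

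The main obstacle is to show that $g$ is monotone and open. Monotonicity and openness would follow by exhibiting $P$ as the coequaliser in $\Kp$ of $\coprod_{i,j}Q_i\otimes_P Q_j\rightrightarrows\coprod_i Q_i$. By \cref{creatingfincolimkp} this coequaliser is computed in $\Pos\fn$ as the quotient by the generated $\theta$ with the order $[a]\le[b]$ iff $a\le a'\theta b$. So the key check is that this quotient order matches the order of $P$. Suppose $p\le p'$ with $p=f_iq$. Openness of $f_i$ gives $q\le q'$ with $f_iq'=p'$, so $[q]\le[q']$. Conversely, monotone maps send the quotient order into that of $P$. Hence the quotient is $P$, and the universal property of the coequaliser yields $g$ in $\Kp$, unique by joint surjectivity.
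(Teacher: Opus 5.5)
Your proof is correct and follows the paper's structure: you define $g$ pointwise and get well-definedness from the monoidal pullback $Q_i\otimes_P Q_j$, you refute non-joint-surjectivity by a non-uniqueness argument, and you get universality from pullback-stability in \cref{topology}. You are more careful than the paper in two places. The glued poset $P\sqcup_{P'}P$ makes precise the paper's vague ``send points outside the image to an arbitrary element'' argument. The coequaliser presentation via \cref{creatingfincolimkp} checks that $g$ is monotone and open, which the paper leaves implicit. One small point: the identity $gf=g_f$ for an arbitrary $f\in S$, not only the chosen $f_i$, is not stated, but it follows by the same monoidal-pullback argument.
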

\begin{proof}
  (1) $\nt$ (2): Suppose $S$ is a $J$-covering. We show this is a colimit diagram. Suppose for any $f : Q \to P$ in $S$ we have a map $g_f : Q \to R$, such that for any $h : Q' \to Q$ we have $g_f h = g_{fh}$. We construct a map $g : P \to R$ as follows. For any $p\in P$, choose $f : Q \to P$ with $f(q) = p$. Define $g(p)$ to be $g_f(q)$. To show this is well-defined, suppose we have another $f' : Q' \to P$ with $f'(q') = p$. Consider the following diagram,
  \[\begin{tikzcd}[row sep = small]
    & {Q'} \\
    {Q\otimes_PQ'} \arrow[ur, "{\pi'}"] \arrow[dr, "\pi"'] && P & R \\
    & Q
    \arrow["{f'}"{description}, from=1-2, to=2-3]
    \arrow["{g_{f'}}", curve={height=-12pt}, from=1-2, to=2-4]
    \arrow["g"{description}, dashed, from=2-3, to=2-4]
    \arrow["f"{description}, from=3-2, to=2-3]
    \arrow["{g_f}"', curve={height=12pt}, from=3-2, to=2-4]
  \end{tikzcd}\]
  By assumption, we have 
  \[ g_f(q) = g_f(\pi(q,q')) = g_{f\pi}(q,q') = g_{f'\pi'}(q,q') = g_{f'}(q'). \]
  Hence, the map $g$ is well-defined, and uniqueness follows from the definition.

  (2) $\nt$ (1): Suppose now $S$ is a colimit cone on $P$. Now if $S$ is not jointly surjective, evidently when we have a cocone $S \to R$ for some $R\in\Kp$, we can send any $p\in P$ outside the image of $S$ to an arbitrary element in $R$ below the image of $S \to R$, a contradiction. Hence, $S$ is jointly surjective.

  Combining the above, if $S$ is effectively epic, then it is a $J$-covering, but we have shown in \Cref{topology} that $J$ is stable under pullbacks, thus $S$ is universally effectively epic. Hence (1) $\eff$ (2) $\eff$ (3).
\end{proof}

\begin{remark}[Sheaves and left exact functors]\label{leftexactfuncaresheaf}
    Since each poset $P\in\Kp$ is finite, a sieve $S$ on $P$ is a $J$-covering iff it contains a \emph{finite} family $\set{Q_i \to P}_{i\in n}$ which is already jointly surjective. By the proof of~\cref{canonicaltopology}, $P$ is already the colimit of the family of spans $\set{Q_i \leftarrow Q_i \otimes_{P} Q_j \rightarrow Q_j}_{i,j\in n}$. This way, if a functor $F : \Kp\op \to \Set$ preserves finite limits, $FP$ will be the limit of the corresponding cospans $\lt\set{FQ_i \to F(Q_i \otimes_P Q_j) \leftarrow FQ_j}_{i,j\in n}$, hence also the limit $\lt_{Q\to P\in S}FQ$ of the image of sieve under $F$. This shows left exact functors are sheaves on $\Kp$. However, we will see that not all sheaves are of this form. In fact,~\cref{sec:nerveonK} will show that these sheaves exactly correspond to profinite Heyting algebras (cf.~\cref{filteredIKp}).
\end{remark}

Henceforth, we will denote the sheaf topos $\sh(\Kp,J)$ as $\mc K$, and refer to it as the \emph{K-topos}. Since $J$ is the canonical topology, the Yoneda embedding factors through $\mc K$ and we have a fully faithful embedding $\yon : \Kp \hook \mc K$. However, we often omit explicitly mentioning $\yon$ and identify $P\in\Kp$ with the representable functor in $\mc K$.

\section{Some topos-theoretic properties of the K-topos}\label{sec:property-K}

In this section we establish some topos-theoretic properties of the K-topos $\mc K$. Many of them will have application to profinite Heyting algebras and uniform interpolation later, when we understand better the relationship between these subjects and the K-topos.

\subsection{Supercompact generation}

$\mc K$ is \emph{supercompactly generated} in the sense of~\cite{RN900}. An object $e$ in a topos $\mc E$ is \emph{supercompact} if every jointly epic family of maps $\set{e_i \to e}_{i\in I}$ contains an actual epimorphism. And $\mc E$ is supercompactly generated if it has a generating family of supercompact objects. 

To show $\mc K$ is supercompactly generated, we consider another site presentation of $\mc K$, which is also considered in~\cite[Sec. 4.2]{ghilardi2013sheaves}. Let $\Kpp$ be the full subcategory of $\Kp$ consisting of \emph{rooted} finite posets, i.e. those posets containing a minimum. We may look at the site $(\Kpp,J)$ where $J$ is the topology on $\Kp$ restricted to $\Kpp$. These two sites generate the same sheaf topos:

\begin{lemma}\label{comparisonK}
  The inclusion $\Kpp \hook \Kp$ induces equivalent sheaf topoi,
  \[ \sh(\Kp,J) \simeq \sh(\Kpp,J). \]
\end{lemma}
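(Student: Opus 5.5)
The plan is to apply the Comparison Lemma in the form for dense subcategories (Grothendieck's comparison lemma, e.g.\ Johnstone's Elephant C2.2.3). If $\Kpp \hook \Kp$ is $J$-dense, then restriction of sheaves along the inclusion is an equivalence. Here density means: every $P\in\Kp$ has a $J$-covering sieve generated by morphisms with domain in $\Kpp$. The topology on $\Kpp$ is then the induced one, whose covering sieves are those that generate $J$-covering sieves in $\Kp$. Since $J$ consists of the jointly surjective sieves, this induced topology is again \emph{jointly surjective families of maps between rooted posets}. So it coincides with the topology the statement calls $J$ restricted to $\Kpp$.

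The key step is to exhibit such covers. For $P\in\Kp$ and $p\in P$, let ${\uparrow}p=\scomp{q\in P}{p\le q}$ be the principal upset, which is a rooted finite poset with root $p$. The inclusion $i_p:{\uparrow}p\hook P$ is open: if $q\in{\uparrow}p$ and $q\le q'$ in $P$, then $q'\in{\uparrow}p$ already, so $q'$ is its own preimage. The family $\set{i_p}_{p\in P}$ is obviously jointly surjective, since $p\in{\uparrow}p$. Hence the sieve it generates is $J$-covering by \cref{topologyonKp}. This establishes density.

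It remains to check that the restricted topology on $\Kpp$ is exactly the jointly surjective families. A sieve $S$ on a rooted $P_0$ in $\Kpp$ generates a sieve $\ov S$ in $\Kp$ with the same image in $P_0$, because $\ov S$ consists of composites $Q\to R\to P_0$ with $R\to P_0\in S$. So $\ov S$ is $J$-covering iff $S$ is jointly surjective. Equivalently, $J$ restricted to $\Kpp$ is the topology induced by the dense inclusion, and it is a genuine Grothendieck topology on $\Kpp$ by the comparison lemma itself. Note that one does not need $\Kpp$ to have the monoidal pullbacks of \cref{monoidalpullback}. One may still remark that pulling back a rooted cover along a map of rooted posets and then recovering by principal upsets again yields a jointly surjective family.

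I expect the only real obstacle to be bookkeeping in the comparison lemma rather than mathematics. One has to confirm that the hypotheses needed (a small full subcategory, density for the topology) are met in the version cited. One also has to confirm that "the topology on $\Kp$ restricted to $\Kpp$" is interpreted as the induced topology, which I have just shown agrees with the naive jointly surjective description. Openness of $i_p$ is the one concrete verification, and it is immediate.
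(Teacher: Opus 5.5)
Your proposal is correct and matches the paper's proof: both apply the comparison lemma, using the jointly surjective family of open inclusions $\set{\dv p \hook P}_{p\in P}$ of rooted principal upsets to show $\Kpp$ is $J$-dense in $\Kp$. Your extra check that the induced topology on $\Kpp$ is again given by the jointly surjective sieves is correct, and it is a step the paper leaves implicit.
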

\begin{proof}
  By the comparison lemma (cf.~\cite[C2.2]{johnstone2002sketches}), it suffices to show every object in $\Kp$ admits a covering family of objects from $\Kpp$. For any finite poset $P$, the family of open embeddings $\set{\dv p \hook P}_{p\in P}$ is a $J$-covering, and the domains $\dv p$ are all rooted.
\end{proof}

\begin{proposition}\label{supercompact}
  $\mc K$ is supercompactly generated.
\end{proposition}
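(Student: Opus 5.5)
The plan is to show that the representables $\yon P$ for rooted $P\in\Kpp$ form a generating family of supercompact objects. Generation is immediate from \cref{comparisonK}: $\mc K\simeq\sh(\Kpp,J)$, and $J$ is subcanonical by \cref{canonicaltopology}, so the representables $\yon P$ with $P$ rooted generate $\mc K$. It therefore remains to prove that each such $\yon P$ is supercompact.

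So fix a rooted $P$ with root $\bot$, and let $\set{E_i\to \yon P}_{i\in I}$ be a jointly epimorphic family in $\mc K$. Since the representables generate, we may refine each $E_i$ by an epimorphic family of representables $\yon Q\to E_i$. By Yoneda these correspond to maps $Q\to P$ in $\Kpp$ (or $\Kp$). Joint epimorphicity onto a representable in a sheaf topos means that the sieve on $P$ generated by all composites $\yon Q\to E_i\to\yon P$ is $J$-covering. By \cref{topologyonKp} this sieve is then jointly surjective, so in particular some $f : Q\to P$ factoring through some $E_{i}$ has $\bot$ in its image, say $f(q)=\bot$.

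The key step is the observation that such an $f$ is surjective. Since $\bot\le p$ for every $p\in P$ and $f$ is open, there is some $q'\ge q$ with $f(q')=p$. Hence the singleton family $\set{f}$ is jointly surjective, i.e.\ $J$-covering, so $\yon f:\yon Q\to\yon P$ is an epimorphism in $\mc K$. This $\yon f$ factors through $E_{i}\to\yon P$, so $E_{i}\to\yon P$ is itself an epimorphism, which proves supercompactness.

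The only delicate point is the translation between joint epimorphicity in $\mc K$ and covering sieves on $P$. This is the standard fact that a family of subsheaves jointly covers $\yon P$ iff the sieve of maps $Q\to P$ factoring through some member is covering; the maps $Q\to P$ in that sieve are exactly those whose image under $\yon$ factors through the union of the images of the $E_i$. I would state this translation explicitly and cite it, for instance from \cite{johnstone2002sketches}. Beyond that, openness of maps out of a rooted poset carries the whole argument.
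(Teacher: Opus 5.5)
Your proof is correct and follows the same route as the paper: both rest on the observation that on a rooted $P$ any covering sieve contains a surjection, because an open map whose image contains the root is surjective. The only difference is that the paper then cites the general fact that the representables of a principal site are supercompact (\cite{RN900}), whereas you prove that step directly via the factorisation of $\yon f$ through $E_i \to \yon P$.
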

\begin{proof}
  Firstly, notice that if $P$ is a rooted poset, then a sieve $S$ is a $J$-covering sieve on $P$ iff it contains a surjection: the sieve being jointly epic implies the root of $P$ lies in the image of some $f : Q \to P$, and $f$ being open implies it must be a surjection. Thus, the site $(\Kpp,J)$ is a principal site in the sense of \cite[Def. 2.1.6]{RN900}, and the representables from $\Kpp$ will all be supercompact in $\sh(\Kpp,J) \simeq \mc K$, which form a generating family.
\end{proof}

\begin{corollary}\label{unionofsub}
  If we view $\mc K$ as a subtopos of the presheaf topos $[\Kpp\op,\Set]$, then the inclusion $\mc K \hook [\Kpp\op,\Set]$ preserves arbitrary unions of subobjects.
\end{corollary}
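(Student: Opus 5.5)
The plan is to compute unions in both toposes and compare them directly. For a sheaf $F \in \mc K \simeq \sh(\Kpp,J)$ and a family of subsheaves $\set{G_i \inj F}_{i\in I}$, the union in the presheaf topos $[\Kpp\op,\Set]$ is computed pointwise: it is the subpresheaf $G$ with $G(P) = \bigcup_i G_i(P)$. The union in $\mc K$ is the sheafification of $G$, equivalently its $J$-closure inside the sheaf $F$. So it suffices to show that this pointwise union $G$ is already a sheaf, or equivalently that it is $J$-closed in $F$.

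The key input is~\cref{supercompact}, more precisely the fact recorded in its proof: on a rooted poset $P$, a sieve is $J$-covering iff it contains a surjection $q : Q \surj P$. Take $x \in F(P)$ with $x$ locally in $G$, meaning the sieve $\scomp{f : Q\to P}{F(f)(x) \in G(Q)}$ is covering. This sieve then contains a single surjection $q : Q \surj P$ in $\Kpp$, and $F(q)(x) \in G(Q) = \bigcup_i G_i(Q)$, so $F(q)(x) \in G_i(Q)$ for a single index $i$. Since $G_i$ is a subsheaf, it is $J$-closed in $F$. The singleton $\set{q}$ generates a covering sieve on $P$, and $x$ restricts into $G_i$ along every map of that sieve, because $G_i$ is a subpresheaf and is therefore closed under further restriction. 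Hence $x \in G_i(P) \subseteq G(P)$, so $G$ is closed.

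It then remains to note that a closed subpresheaf of a sheaf is a sheaf. Separatedness is inherited from $F$, and a matching family in $G$ glues uniquely in $F$ to an element that lies locally in $G$, hence in $G$ by closedness. Therefore $G$ is the union in $\mc K$ as well, and the inclusion preserves arbitrary unions.

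The step needing the most care is the reduction to a single surjection. Here rootedness of the objects of $\Kpp$ is essential, which is why the statement is phrased for $[\Kpp\op,\Set]$ rather than $[\Kp\op,\Set]$. Over a non-rooted poset, a covering would involve several maps, and the different pieces could land in different $G_i$. Beyond that point the argument is routine.
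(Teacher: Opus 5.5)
Your proof is correct and takes the same route as the paper. The paper notes in the proof of \cref{supercompact} that $(\Kpp,J)$ is a principal site, since every covering sieve on a rooted poset contains a single surjection, and then cites \cite[Cor. 2.1.8]{RN900} for the general fact that unions are preserved for principal sites; your argument that the pointwise union is $J$-closed in $F$ is a direct, self-contained proof of that cited fact in this case, and it also correctly covers the empty union, since the empty sieve never covers a rooted poset.
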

\begin{proof}
  This follows from~\cite[Cor. 2.1.8]{RN900}.
\end{proof}

\begin{corollary}\label{subKcompdistr}
  $\mc K$ is completely distributive in the sense of~\cite{BorceuxFrancis2006Inci}, i.e. the subobject lattice of any sheaf in $\mc K$ is a completely distributive lattice.
\end{corollary}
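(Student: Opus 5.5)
The plan is to derive complete distributivity from supercompact generation (\cref{supercompact}) together with \cref{unionofsub}. Take a sheaf $X \in \mc K$. Since $\mc K$ is closed in $[\Kpp\op,\Set]$ under finite limits and, by \cref{unionofsub}, under arbitrary unions of subobjects, the inclusion $\sub_{\mc K}(X) \hook \sub_{[\Kpp\op,\Set]}(X)$ preserves arbitrary meets and arbitrary joins. Arbitrary intersections of subsheaves are computed pointwise, so they coincide with the presheaf meets. The lattice $\sub_{[\Kpp\op,\Set]}(X)$ is the lattice of subfunctors of $X$. This is a sublattice of $\prod_{P\in\Kpp}\mc P(XP)$ closed under arbitrary pointwise unions and intersections. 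Such a lattice is completely distributive, because products of powersets are completely distributive and complete distributivity is an equational law in arbitrary meets and joins.

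It follows that $\sub_{\mc K}(X)$ is a complete sublattice, preserving all meets and joins, of a completely distributive lattice. Hence it is itself completely distributive: the identity $\bigwedge_{i}\bigvee_{j\in J_i} a_{ij} = \bigvee_{f}\bigwedge_i a_{i f(i)}$ holds in the ambient lattice, and both sides are computed identically in the sublattice.

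The main point to check is the preservation of joins, which is exactly the content of \cref{unionofsub}. An alternative route, avoiding the presheaf embedding, would be to show directly that every subobject of $X$ is the union of the supercompact subobjects below it. These are images of maps from representables $\yon P$ with $P\in\Kpp$. One would then show that such a supercompact $s$ satisfies $s\le\bigvee_j a_j$ only if $s\le a_j$ for some $j$. This yields a join-dense family of completely join-prime elements, which characterises complete distributivity for complete lattices. Either argument suffices; I would present the first, since it only uses \cref{unionofsub}.
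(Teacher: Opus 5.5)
Your argument is correct and is the paper's proof made explicit. Joins in $\sub_{\mc K}(X)$ are pointwise by \cref{unionofsub} and meets are pointwise because the inclusion preserves limits, so $\sub_{\mc K}(X)$ is a complete sublattice of $\prod_{P\in\Kpp}\mc P(XP)$ and inherits complete distributivity. One small correction to your unused aside: a join-dense family of completely join-prime elements is sufficient for complete distributivity but does not characterise it (e.g.\ $[0,1]$ is completely distributive yet has no completely join-prime elements), though only sufficiency would be needed there.
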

\begin{proof}
  This is simply due to the fact that both intersections and unions of subobjects are computed pointwise over $\Kpp$, thus they are completely distributive.
\end{proof}

\begin{remark}[Subobject lattices in the K-topos]
    In fact one can say more about subobject lattices in $\mc K$. Since $\mc K$ is supercompactly generated, subobject lattices in $\mc K$ will also be supercompactly generated as frames. By~\cite[Thm 4.2]{caramello2011topos}, they will be lattices of upward closed subsets of certain posets. \cref{sec:nerveonK} will give a characterisation of which posets arise this way.
\end{remark}

\subsection{Subobject classifier}

The subobject classifier $\Omega$ in $\mc K$ will be crucial for applications to uniform interpolation which will be discussed in~\cref{sec:internaluniform}. Thus, here we provide an explicit description of $\Omega$. 

\begin{lemma}\label{subobject}
  The subobject classifier $\Omega$ of $\mc K$ as a sheaf on $\Kp$ is given by 
  \[ \Omega(P) \cong \mc UP. \]
\end{lemma}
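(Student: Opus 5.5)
We prove this with the standard description of the subobject classifier of a sheaf topos: $\Omega(P)$ is the set of $J$-closed sieves on $P$, with restriction along $g : R \to P$ given by pullback of sieves. Since $J$ is subcanonical (\cref{canonicaltopology}), $\Omega(P)$ is also the set of subobjects of the representable $\yon P$ in $\mc K$. The plan is to give an explicit bijection between closed sieves on $P$ and upward closed subsets of $P$, and then to check that it is natural, turning pullback of sieves into inverse image of up-sets.

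To an up-set $U \in \mc UP$ we assign the sieve
\[ S_U \coloneq \scomp{f : Q \to P}{f(Q) \subseteq U}. \]
This is clearly a sieve. We first check it is $J$-closed. Let $g : R \to P$ be such that $g^*S_U$ is $J$-covering, i.e.\ jointly surjective onto $R$. Then every $r \in R$ is $h(q)$ for some $h$ with $gh \in S_U$, so $g(r) \in U$ and hence $g \in S_U$. Conversely, to a closed sieve $S$ we assign
\[ U_S \coloneq \bigcup_{f \in S} f(Q) \subseteq P. \]
This set is upward closed because each $f$ is open: if $f(q) \le p'$, then openness gives $q \le q'$ with $f(q') = p'$. (Equivalently, $U_S$ is the set of $p$ with $(\dv p \hook P) \in S$.)

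The two constructions are inverse. Clearly $U_{S_U} = U$, using the open embeddings $\dv p \hook P$ for $p \in U$. The main point is $S_{U_S} = S$. The inclusion $S \subseteq S_{U_S}$ is immediate. For the other inclusion, take $g : R \to P$ with $g(R) \subseteq U_S$. For each $r \in R$, pick $f : Q \to P$ in $S$ and $q \in Q$ with $f(q) = g(r)$, and form the monoidal pullback $R \otimes_P Q$ of \cref{monoidalpullback}. Its projection to $R$ lies in $g^*S$, and its image contains $r$. Hence $g^*S$ is jointly surjective, i.e.\ covering, and closedness of $S$ gives $g \in S$. This step is the only real obstacle, and it is handled precisely by \cref{monoidalpullback} together with the description of covers in \cref{topologyonKp}.

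Finally, naturality. For $g : R \to P$, the pullback sieve is $g^*S_U = \scomp{h}{gh(Q) \subseteq U} = S_{g\inv U}$. So restriction along $g$ corresponds to $g\inv : \mc UP \to \mc UR$, which is the functoriality of $\mc U$ on $\Kp$ (well defined, since preimages of up-sets along monotone maps are up-sets). This yields an isomorphism $\Omega \cong \mc U(-)$ of sheaves. Under it, the generic subobject $\top : 1 \to \Omega$ corresponds to $\top = P \in \mc UP$.
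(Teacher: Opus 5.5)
Your proof is correct and follows the same route as the paper. Both identify $\Omega(P)$ with the $J$-closed sieves on $P$ and match a closed sieve $S$ with the up-set given by the union of the images of its members. The one cosmetic difference is that the paper applies closedness once, to the inclusion $i : U_S \hookrightarrow P$, and concludes that $S$ is the sieve generated by $i$; you instead apply closedness to each $g$ with image in $U_S$ via monoidal pullbacks, where the projection to $Q$ is open by \cref{monoidalpullback} because $g$ is open. You also spell out the closedness of $S_U$ and naturality, which the paper leaves implicit.
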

\begin{proof}
  It is well-known that $\Omega(P)$ can be described as the set of $J$-closed sieves on $P$ (cf. the proof of~\cite[Lem. A.2.1.10]{johnstone2002sketches}). We show that every $J$-closed sieve $S$ on $P$ is generated by some open inclusion $Q \hook P$ in $\Kp$. Given such an $S$, consider the union of images of maps in $S$ and denote it as the inclusion $i : Q \hook P$. By construction, $i^*(S)$ is jointly surjective on $Q$, thus is a $J$-covering. By $J$-closedness, $i \in S$, thus since every map in $S$ factors through $i$, $S$ is generated by $i$. This shows that $J$-closed sieves on $P$ can be identified with upward closed subsets of $P$, hence $\Omega(P) \cong \mc UP$.
\end{proof}

As an easy consequence, we observe immediately that $\mc K$ is 2-valued. Let $\Sigma = \set{0<1}$ be the Sierpi\'nski poset.

\begin{corollary}\label{twovalued}
  $\mc K$ is two-valued, i.e. $\Omega$ has exactly two global points.
\end{corollary}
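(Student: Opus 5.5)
Global points of $\Omega$ are maps $1 \to \Omega$ in $\mc K$. Since $J$ is subcanonical (\cref{canonicaltopology}), the terminal object $1 \in \Kp$ gives a representable sheaf $\yon 1$. I will first check that $\yon 1$ is terminal in $\mc K$: every finite poset $P$ has exactly one (automatically open) map $P \to 1$, so $\yon 1(P)$ is a singleton for all $P$, and $\yon 1$ is the terminal presheaf, hence the terminal sheaf. Yoneda then gives
\[ \mc K(1,\Omega) \cong \Omega(1) \cong \mc U 1 \]
by \cref{subobject}. The one-point poset has exactly two upward closed subsets, $\emptyset$ and $1$ itself, so $\Omega$ has exactly two global points.

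It remains to confirm these two points are distinct and are precisely $\bot$ and $\top$, so that $\mc K$ is non-degenerate. From the proof of \cref{subobject}, the element $1 \in \mc U1$ is the maximal sieve, i.e.\ $\top$. The element $\emptyset$ is the closed sieve generated by the empty inclusion $\emptyset \hook 1$. This sieve is the empty sieve, because a map $Q \to 1$ factors through $\emptyset$ only when $Q = \emptyset$. It is $J$-closed because, for $Q$ nonempty, the pullback of the empty sieve along $Q \to 1$ is not jointly surjective. Hence it corresponds to $\bot$, and since $1$ is non-empty, $\bot \neq \top$.

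The main subtlety is the identification of $\yon 1$ with the terminal sheaf, together with the bookkeeping for the empty poset. The empty poset lies in $\Kp$, and the empty family covers it, so $\yon 1(\emptyset)$ must still be a singleton. This holds because there is a unique map $\emptyset \to 1$, and it is consistent with the sheaf condition on the empty cover. Beyond this the argument is just Yoneda combined with \cref{subobject}. The Sierpiński poset $\Sigma$ mentioned before the corollary is not needed here; presumably it appears in later discussion of non-global points.
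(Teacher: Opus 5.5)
Your computation $\mc K(1,\Omega)\cong\Omega(1)\cong\mc U1$, giving exactly two global points, is the same as the paper's proof, so the argument is correct. One small slip in your optional non-degeneracy paragraph, which does not affect the conclusion: the $J$-closed sieve corresponding to $\emptyset\in\mc U1$ is the sieve $\set{\emptyset\to 1}$, not the empty sieve. Indeed, the empty sieve on $1$ is not $J$-closed, because its pullback along $\emptyset\to 1$ is the empty sieve on $\emptyset$, which covers vacuously.
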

\begin{proof}
  We can directly compute the global sections of $\Omega$ via~\cref{subobject},
  \[ \mc K(1,\Omega) \cong \Omega(1) \cong \mc U1 \cong \Sigma. \]
  Thus it is two-valued.
\end{proof}

Equivalently, this is saying that $\mc K$ is a \emph{hyperconnected topos} (cf.~\cite[A4.6]{johnstone2002sketches}). We will meet different presentations of the subobject classifier in~\cref{sec:nerveonK}. In~\cref{sec:internaluniform} we will establish more properties about the subobject classifier, and see how it connects to properties of uniform interpolation.

\subsection{The K-topos as a gros topos}

In the terminology of~\cite{sufficientcohesion}, the K-topos (over $\Set$) is \emph{pre-cohesive} with \emph{sufficient cohesion}.

\begin{proposition}\label{Kcohesive}
  The global section functor $\Gamma : \mc K \to \Set$ makes $\mc K$ pre-cohesive over $\Set$ in the sense of~\cite[Def. 2.1]{sufficientcohesion}, or a punctually locally connected and local topos in the sense of~\cite{johnstone2011remarks}. Concretely, this means that there is an adjoint quadruple between $\mc K$ and $\Set$, 
  \[\begin{tikzcd}
    {\mc K} & \Set
    \arrow[""{name=0, anchor=center, inner sep=0}, "\Gamma"{description}, curve={height=8pt}, from=1-1, to=1-2]
    \arrow[""{name=1, anchor=center, inner sep=0}, "\Pi", curve={height=-21pt}, from=1-1, to=1-2]
    \arrow[""{name=2, anchor=center, inner sep=0}, "\nabla", curve={height=-21pt}, from=1-2, to=1-1]
    \arrow[""{name=3, anchor=center, inner sep=0}, "\Delta"{description}, curve={height=8pt}, from=1-2, to=1-1]
    \arrow["\dashv"{anchor=center, rotate=-90}, draw=none, from=1, to=3]
    \arrow["\dashv"{anchor=center, rotate=-90}, draw=none, from=0, to=2]
    \arrow["\dashv"{anchor=center, rotate=-90}, draw=none, from=3, to=0]
  \end{tikzcd}\]
  satisfying the following conditions:
  \begin{enumerate}
      \item The right adjoint $\nabla$ of $\Gamma$, hence also the constant sheaf functor $\Delta$, are fully faithful;
      \item The connected component functor $\Pi$ preserves finite products;
      \item The canonical natural transformation $\Gamma \to \Pi$ is pointwise epic.
  \end{enumerate}
\end{proposition}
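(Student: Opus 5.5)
We work with the site $(\Kp,J)$ and use that $\Kp$ has a terminal object $1$ and, by \cref{creatingfincolimkp}, finite colimits. The plan is to build the four functors explicitly and then check conditions (1)--(3).

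\textbf{Constructing the adjoints.} Since $J$ is subcanonical and $1\in\Kp$, the global section functor is evaluation at the terminal object, $\Gamma F = \mc K(1,F) \cong F(1)$. Evaluation at $1$ preserves all limits and colimits of presheaves. In $\mc K$, colimits are sheafifications of presheaf colimits, so to see that $\Gamma$ preserves colimits it suffices that sheafification does not change the value at $1$. This holds because $1$ is rooted, so by the proof of \cref{supercompact} every covering sieve on $1$ contains a surjection onto $1$, i.e.\ it is principal. The right adjoint is $\nabla S(P) = \Set(\geo{P}, S)$, with $\geo P$ the underlying set of points of $P$, i.e.\ $\Set(\Kp(1,P),S)$. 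It is a sheaf because a covering family $\set{Q_i\to P}$ is jointly surjective on points, which is precisely the matching condition for functions out of the point set. The adjunction $\Gamma\dashv\nabla$ then comes from the natural bijection between maps $F\to \nabla S$ and maps $F(1)\to S$: a map $F(1)\to S$ extends uniquely along the elements $F(P)\to F(1)$ given by precomposition with the points $1\to P$.

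For $\Delta$ we take the constant sheaf. We will argue that the constant presheaf $S$ is already a sheaf, for which we need every $P\in\Kp$ that can be covered to be treated correctly. Concretely, $\Delta S(P)$ is the set of locally constant $S$-valued functions on $P$, which is $S^{\pi_0 P}$, where $\pi_0P$ is the set of connected components of the comparability graph of $P$. The left adjoint $\Pi$ sends a representable $P$ to $\pi_0 P$ and extends to all sheaves by left Kan extension, so $\Pi F = \ct_{(P,x)\in\elem F}\pi_0P$. The adjunction $\Pi\dashv\Delta$ is then a Yoneda computation, using $\mc K(P,\Delta S)\cong S^{\pi_0P}\cong\Set(\pi_0P,S)$.

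\textbf{Conditions.} For (1), the counit $\Gamma\nabla S=\Set(\geo1,S)\cong S$ is an isomorphism, so $\nabla$ is fully faithful, and by the standard fact that for an adjoint triple one adjoint is fully faithful iff the other is, so is $\Delta$. For (3), the component $\Gamma F\to\Pi F$ sends $x\in F(1)$ to the class of $(1,x)$. Every element $(P,x)$ is connected in $\elem F$ to an element over $1$: choose a point $p:1\to P$ and take $(1,F(p)x)$. Thus every component of $\elem F$ is hit, and the map is surjective. I expect (2) to be the main obstacle. First, $\Pi 1=1$ since $\pi_0 1=1$. For binary products, because $\Pi$ preserves colimits and every sheaf is a colimit of representables, we may reduce to representables, provided products in $\mc K$ commute with colimits in each variable; this holds because $\mc K$ is a topos. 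The problem is that $P\times Q$ in $\mc K$ is not $P\otimes Q$. However, \cref{monoidalpullback} gives that $P\times Q$ is covered by the representables $R$ admitting open maps to both $P$ and $Q$, and $\Pi(P\times Q)=\pi_0(\elem (P\times Q))$. We must show this equals $\pi_0P\times\pi_0Q$. Surjectivity comes from points, since $1$ maps to both. Injectivity is the delicate part: two spans $P\leftarrow R\to Q$ and $P\leftarrow R'\to Q$ whose image points lie in the same components must be connected in $\elem(P\times Q)$. Each span is connected to its restriction to a point of $R$, so it suffices to connect pairs of points $(p,q),(p',q')$ with $p\le p'$ and $q=q'$. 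For this we use the span $P\leftarrow \Sigma\to Q$, where $\Sigma$ is mapped onto $p\le p'$ on the $P$ side and constantly to a maximal element on the $Q$ side. Openness of these maps has to be arranged by first moving $q$ up to a maximal element, and we will connect $(p,q)$ to $(p,q_{\max})$ by the symmetric trick. Once every element has been moved to pairs of maximal points, the remaining step is to connect pairs of maximal points through a common lower bound, using open maps such as $\Sigma\to$ each factor. Verifying that these auxiliary spans are genuinely open is where the care is needed.
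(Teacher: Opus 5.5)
Your overall plan, which is to write down $\Gamma,\nabla,\Delta,\Pi$ explicitly and check (1)--(3) by hand, is a legitimate alternative to the paper's proof. The paper simply verifies Johnstone's site conditions on $(\Kpp,J)$: a terminal object, covering sieves that are inhabited and connected (via $\otimes$), and a global point on every rooted poset. Your treatment of (1) and (3) is essentially fine. However, condition (2), which you correctly identify as the crux, is not established, and the mechanism you sketch would not work.

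A map $1\to P$ in $\Kp$ is open only if it picks a \emph{maximal} element. So $\Kp(1,P)$ is the set of maximal points of $P$, and the elements of $P\times Q$ over $1$ are pairs $(m,n)$ of maximal points. Your reduction to pairs with ``$p\le p'$, $q=q'$'' is therefore vacuous. Moreover, the image of an open map is an up-set, so an open map from $\Sigma$ (or from any chain) into $P$ meets exactly one maximal element of $P$. Spans with domain $\Sigma$ can never connect $(m_1,n)$ to $(m_2,n)$ for distinct maximal $m_1,m_2$.

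The missing idea is the ``punctual'' trick. For rooted $P$ and maximal $n\in Q$, the constant map $P\to 1\xrightarrow{n}Q$ is open, so $(\mathrm{id}_P,\mathrm{const}_n)\in(P\times Q)(P)$. Restricting it along each $m:1\to P$ shows that all $(m,n)$ with $n$ fixed lie in one component, and symmetrically using $Q$. Hence $\elem(P\times Q)$ over $\Kpp$ is inhabited and connected, i.e.\ $\Pi(P\times Q)\cong 1$ for rooted $P,Q$, and your colimit reduction then finishes (2). This is exactly what the existence of global points plus a terminal object encodes in the criterion the paper cites.

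Several other steps also need repair; all are cured by working over $\Kpp$, as the paper does.
\begin{itemize}
\item The ``underlying set of points of $P$'' and $\Kp(1,P)$ are not the same. $\nabla S(P)=\Set(\Kp(1,P),S)$ is correct. Functions on the whole underlying set do give a sheaf, but not the right adjoint of $\Gamma$: testing on $F=\Sigma$ gives $S^2$ instead of $S$.
\item The constant presheaf is not a sheaf on $\Kp$. The empty poset is covered by the empty sieve, forcing $F(\emptyset)=1$, and $1+1$ is covered by two copies of $1$. Your formula $S^{\pi_0P}$ is correct, but you must check that it is a sheaf. Openness of the covering maps is what lets locally constant sections glue; this is the analogue of the paper's observation that covering sieves are connected.
\item Likewise $\Pi F=\pi_0(\elem F)$ fails over $\Kp$: for the representable $1+1$ it gives $1$ instead of $2$. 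It does hold over $\Kpp$.
\item For $\Gamma$ to preserve colimits, it is not enough that covering sieves on $1$ are principal; you need them to be maximal. This holds because every nonempty finite $Q$ has a maximal element, giving an open map $1\to Q$ through which $\mathrm{id}_1$ factors.
\end{itemize}
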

\begin{proof}
  We use the site characterisation provided in~\cite{johnstone2011remarks}. Notice that $(\Kpp,J)$ has a terminal object $1$. For any $J$-covering sieve on $P$ in $\Kpp$, by definition it is inhabited since it contains some open surjection $Q \surj P$; it is then also connected as a subcategory of $\Kpp/P$ due to the existence of the monoidal pullback operator (cf.~\cref{monoidalpullback}). Furthermore, any $P\in\Kpp$ has a global point $1 \to P$ corresponding to a maximal element in $P$. Thus, $(\Kpp,J)$ is a connected, punctually locally connected, and local site in the sense of~\cite{johnstone2011remarks}, thus $\mc K$ has these properties.
\end{proof}

\begin{corollary}\label{nabladoublenegation}
    $\nabla : \Set \to \mc K$ is the inclusion of $\neg\neg$-sheaves on $\mc K$.
\end{corollary}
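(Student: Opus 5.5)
Since $\nabla$ is fully faithful by \cref{Kcohesive}, it is the inclusion of some subtopos, namely $\Set \simeq \sh_j(\mc K)$ for a unique topology $j$. The claim is that $j = \neg\neg$, and I will identify the two topologies through the sheaves they determine. The general fact I plan to use is from~\cite{johnstone2011remarks} (see also~\cite{sufficientcohesion}): for a local topos whose unique point is also a dense subtopos, the subtopos $\nabla : \Set \hook \mc K$ is exactly the $\neg\neg$-sheaves. In a local topos $\nabla$ is always a subtopos inclusion. The remaining task is to show it is dense and that it is the smallest dense subtopos, which means it is the Boolean one.

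The plan uses two steps. First, density. The subtopos $\nabla$ is dense iff $\nabla$ preserves the initial object, i.e. $\nabla\emptyset \cong 0$. I will compute $\nabla$ explicitly on the site $(\Kpp,J)$. Here $\nabla S(P) = \Set(\Gamma P, S) = S^{\Gamma P}$, since $\Gamma$ of a representable $P$ is $\Kp(1,P)$. The points $1 \to P$ in $\Kp$ are the maximal elements of $P$, and such a point is open precisely when its image is upward closed, which holds for maximal elements. Every nonempty finite poset has a maximal element, so $\Gamma P \neq \emptyset$ and hence $\nabla\emptyset(P) = \emptyset$, giving $\nabla\emptyset = 0$. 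Thus $\Set$ is a dense subtopos.

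Second, I will show that it is the smallest dense subtopos. The $\neg\neg$ subtopos is the smallest dense one, so $\sh_{\neg\neg}(\mc K) \subseteq \Set$. Now $\sh_{\neg\neg}(\mc K)$ is Boolean and nondegenerate, and it is a subtopos of $\Set$. Subtopoi of $\Set$ are only $\Set$ and the degenerate topos, so $\sh_{\neg\neg}(\mc K)$ must equal all of $\Set$. This gives the desired equality.

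As an alternative check, I can argue through $\Omega$ using \cref{subobject}. The $\neg\neg$-closed sieves on a rooted $P$ are the upsets $U$ with $\neg\neg U = U$, where $\neg U$ is the set of points with no extension into $U$. In the finite case such a $U$ is determined by the set of maximal elements it contains. That gives $\Omega_{\neg\neg}(P) \cong 2^{\max P} = \nabla 2(P)$, which is consistent with the claim. I expect the main obstacle to be the careful identification $\Gamma P \cong \max P$ and the resulting description of $\nabla$. Once that is in place, the density and minimality argument is formal.
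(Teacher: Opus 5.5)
Your proof is correct, but it takes a different route from the paper. The paper's proof is one line. It notes that $\nabla$ is fully faithful and $\Gamma$ is left exact, so $\nabla$ is a subtopos. It then invokes a general result from the cohesion literature (the cited Cor.~4.5), which applies because of the structure established in \cref{Kcohesive}; both density and minimality are left to that citation.

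You instead prove the statement from scratch in two steps:
\begin{enumerate}
\item Density, checked on the site. You use $\nabla S(P)\cong S^{\Gamma P}$ and $\Gamma P\cong \Kp(1,P)\cong \max P$. Since $\max P\neq\emptyset$ for every $P\in\Kpp$, you get $\nabla\emptyset\cong 0$.
\item Minimality, by formal facts: $\neg\neg$ is the smallest dense subtopos, and $\Set$ has only two subtopoi.
\end{enumerate}
Your route needs only locality plus the fact that rooted finite posets have maximal points, which is the same fact the paper uses to verify locality in \cref{Kcohesive}. It never uses $\Pi$ or the rest of the pre-cohesive structure. The paper's citation is shorter and presents the result as an instance of a general phenomenon for pre-cohesive toposes. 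Your side computation $\Omega_{\neg\neg}(P)\cong 2^{\max P}\cong \nabla 2(P)$ is also correct and gives a concrete description of the $\neg\neg$-closed sieves.

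Two small points.
\begin{itemize}
\item The ``unique point'' of a local topos should be its \emph{focal} point; a local topos can have many points.
\item The nondegeneracy of $\mc K_{\neg\neg}$ needs a line of justification. Since $\mc K_{\neg\neg}$ is dense, it contains $0_{\mc K}$. If it were degenerate, this would force $0\cong 1$ in $\mc K$, which contradicts \cref{twovalued}.
\end{itemize}
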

\begin{proof}
    Notice that from~\cref{Kcohesive}, $\nabla$ is fully faithful and its left adjoint $\Gamma$ preserves finite, in fact all, limits. Thus, $\nabla : \Set \to \mc K$ identifies $\Set$ as a \emph{subtopos} of $\mc K$. By~\cite[Cor. 4.5]{lawvere_internal_nodate} it coincides with $\mc K_{\neg\neg}$, the subtopos for the double negation topology. We will come back to this in~\cref{thm:subtoposofK}.
\end{proof}


$\mc K$ furthermore satisfies sufficient cohesion in the sense of~\cite{lawvere2007axiomatic}; also see~\cite{sufficientcohesion}:

\begin{corollary}\label{sufficientcohesion}
  $\mc K$ satisfies sufficient cohesion, in the sense that every object in $\mc K$ embeds into a contractible object, where $X$ is contractible iff for any $A$, $\Pi(X^A) \cong 1$.
\end{corollary}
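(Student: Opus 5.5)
The plan is to use the standard reduction: sufficient cohesion follows once $\Pi(\Omega^B)\cong 1$ for every object $B$. The singleton map $\{\}\colon X \inj \Omega^X$ is monic in any topos. Moreover, for any $A$ we have $(\Omega^X)^A \cong \Omega^{X\times A}$. So if every power object $\Omega^B$ is connected, then $X$ embeds into the contractible object $\Omega^X$, which proves the statement.

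To show $\Pi(\Omega^B)\cong 1$, I would use condition (3) of \cref{Kcohesive}: the map $\Gamma(\Omega^B)\to\Pi(\Omega^B)$ is epic. Hence it suffices to show that any two global points of $\Omega^B$ have the same image in $\Pi(\Omega^B)$. Global points $1\to\Omega^B$ correspond to subobjects $U\le B$, so it is enough to connect an arbitrary such point $U$ to the top point $B$.

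The connection uses a rooted poset. Let $\Lambda\in\Kpp$ be the three-element poset with a root below two incomparable maximal elements $a,b$. The two maximal elements give open maps, hence points $a,b\colon 1\to\Lambda$ in $\Kp$. First I check that $\Pi(\yon\Lambda)\cong 1$. Since $\Kpp$ has a terminal object and the site $(\Kpp,J)$ is connected and locally connected (as used in the proof of \cref{Kcohesive}), $\Pi$ sends each representable to $1$. Concretely, $\Pi(\yon P)$ is the set of components of the slice $\Kpp/P$, and $\mr{id}_P$ is terminal there.

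Next, regard the upset $\{b\}\in\mc U\Lambda\cong\Omega(\Lambda)$ (by \cref{subobject}) as a subobject $V\inj\Lambda$. Form the subobject
\[ S \coloneq (V\times B)\vee(\Lambda\times U) \;\le\; \Lambda\times B, \]
and let $\chi\colon\Lambda\to\Omega^B$ be its transpose. Pullback of subobjects preserves joins, so $S$ can be restricted along each point:
\begin{itemize}
\item along $a\colon 1\to\Lambda$, $V$ pulls back to $0$ because $a\notin\{b\}$, so $S$ restricts to $U$;
\item along $b$, $V$ pulls back to $1$, so $S$ restricts to $B$.
\end{itemize}
Thus $\chi a = \ulcorner U\urcorner$ and $\chi b = \ulcorner B\urcorner$. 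Applying $\Pi$, both points factor through $\Pi(\Lambda)\cong 1$, so they coincide in $\Pi(\Omega^B)$. Every global point is therefore identified with the top point, which gives $\Pi(\Omega^B)\cong 1$.

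The main point to verify carefully is the pullback computation along $a$ and $b$. This requires identifying the subobject of $\yon\Lambda$ classified by $\{b\}$: by \cref{subobject} it is the representable $\yon\{b\}$, through which $b$ factors and $a$ does not. Beyond that, only the routine compatibility between $\Pi$ on representables and connectedness of the site is needed.
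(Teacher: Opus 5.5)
Your proof is correct, but it takes a different route from the paper.

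\textbf{The paper's route.} The paper cites Lawvere and Menni for the fact that, in a pre-cohesive topos, sufficient cohesion reduces to $\Pi\Omega\cong 1$. It then justifies $\Pi\Omega\cong 1$ by computing $\Pi\Omega$ as $\ct_{P\in\Kpp}\mc UP$, or alternatively by appeal to connectedness of the Rieger--Nishimura ladder.

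\textbf{Your route.} You make the reduction self-contained through the singleton embedding $X\inj\Omega^X$ and $(\Omega^X)^A\cong\Omega^{X\times A}$. This forces you to prove that every $\Omega^B$ is connected, not just $\Omega$. You do this uniformly with the points-to-pieces epimorphism from \cref{Kcohesive}, so only global points $U\le B$ need to be linked. You link each of them to the top via the explicit path $\chi\colon\Lambda\to\Omega^B$ out of a connected representable. In fact $\chi$ is the classifying map $\Lambda\to\Omega$ of $V$ followed by the usual contraction $(p,U)\mapsto\{x\mid p\vee x\in U\}$ of $\Omega^B$. So you are running the standard ``$\Omega$ connected implies $\Omega$ contractible'' argument with a concrete path from $\bot$ to $\top$ in hand.

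\textbf{What each buys.} The paper's proof is shorter, given the literature. Yours avoids both the cited equivalence and any sectionwise description of $\Omega^B$. It also isolates the genuine combinatorial input: a rooted frame with two maximal points separated by an upset. This is exactly what is needed to identify $\bot$ and $\top$ of $\Omega(1)$ in the colimit computing $\Pi\Omega$. The paper's one-line remark about $a\otimes b\in\mc U(P\otimes Q)$ does not by itself supply that identification; with $P=Q=1$ it identifies nothing.
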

\begin{proof}
  From~\cite{sufficientcohesion,lawvere2007axiomatic} it suffices to show the subobject classifier $\Omega$ is connected, i.e.\ $\Pi\Omega \cong 1$. This can either be seen from the alternative description of the subobject classifier in~\cref{subobjectasanerve}, or it also follows easily from~\cite[Prop. 2.14]{menni2014sufficient}. For completeness, we also provide a straightforward proof. This holds because we have
  \[ \Pi \Omega \cong \ct_{P\in\Kpp}\Omega(P) \cong \ct_{P\in\Kpp}\mc UP \cong 1. \]
  In the colimit, any two elements $a\in\mc UP$ and $b\in\mc UQ$ are identified because there are projections $\pi_P,\pi_Q : P \otimes Q \to P,Q$, and we have the element $a\otimes b$ in $\mc U(P \otimes Q) \cong \mc UP + \mc UQ$, where here $+$ denotes the coproduct of distributive lattices and it is computed by a tensor product (cf.~\cite[Ch. III.2]{joyal1984extension}).
\end{proof}

\cite{menni2019every} has introduced a notion of a \emph{perfect} topos, which is in some sense a complementary notion of \emph{scattered} topos introduced in~\cite{ESAKIA200097}. By~\cite[Cor. 7.3]{menni2019every}, a topos is perfect if its only open Boolean subtopos is the degenerate one.

\begin{corollary}
    $\mc K$ is perfect.
\end{corollary}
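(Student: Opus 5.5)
We use the criterion recalled just before the statement, namely \cite[Cor. 7.3]{menni2019every}: a topos is perfect iff its only open Boolean subtopos is the degenerate one. So we show that every open subtopos of $\mc K$ that is Boolean is degenerate. An open subtopos has the form $\mc K/U \simeq \sh_{o(U)}(\mc K)$ for a subterminal $U \inj 1$. The subterminals of $\mc K$ are the global points of $\Omega$, which by \cref{twovalued} are only $0$ and $1$. Equivalently, $\mc K$ is hyperconnected. Hence the only open subtopoi are the degenerate one (for $U=0$) and $\mc K$ itself (for $U=1$). It therefore suffices to show that $\mc K$ is not Boolean.

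To see that $\mc K$ is not Boolean, we exhibit a sheaf whose subobject lattice is not a Boolean algebra. For a representable $P\in\Kp$, the subobjects of $\yon P$ are the $J$-closed sieves on $P$, so $\sub_{\mc K}(\yon P)\cong \Omega(P)\cong \mc UP$ by \cref{subobject}. Take $P=\Sigma=\set{0<1}$. Then $\mc U\Sigma$ is the three-element chain $\emptyset\subset\set{1}\subset\Sigma$. The middle element $\set{1}$ has no complement, so $\mc K$ is not Boolean. One could alternatively argue that if $\mc K$ were Boolean, then $\mc K=\mc K_{\neg\neg}\simeq\Set$ by \cref{nabladoublenegation}. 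This would force $\nabla$ to be an equivalence, which fails because $\yon\Sigma$ is not of the form $\nabla S$: $\nabla S(\Sigma)=S^{\Sigma}$ as sets of points, and the subobject lattices differ.

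Putting this together, the only open Boolean subtopos is $\mc K/0$, which is degenerate, so $\mc K$ is perfect. The only step needing care is the identification of open subtopoi with subterminals together with the fact that two-valuedness leaves no nontrivial subterminals. This is standard (\cite[A4.5, A4.6]{johnstone2002sketches}), so I expect no real obstacle.
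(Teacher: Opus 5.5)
Your proof is correct, and its overall structure matches the paper's: reduce perfectness to non-Booleanness, then show $\mc K$ is not Boolean. The differences lie in how each step is justified.

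For the reduction, the paper simply cites \cite[Cor. 7.4]{menni2019every}. You instead derive it from \cite[Cor. 7.3]{menni2019every}: open subtopoi are slices $\mc K/U$ over subterminals, and by \cref{twovalued} the only subterminals are $0$ and $1$. So the only open subtopoi are the degenerate one and $\mc K$ itself.

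For non-Booleanness, the paper uses the cohesive structure. It notes $\Pi\Omega\cong 1$ (via \cref{sufficientcohesion}) while $\Pi(1+1)\cong 2$, so $\Omega\not\cong 1+1$. You instead compute $\sub_{\mc K}(\yon\Sigma)\cong\Omega(\Sigma)\cong\mc U\Sigma$, a three-element chain whose middle element has no complement.

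Your route is more elementary and self-contained. It needs only \cref{subobject} and \cref{twovalued}, and neither the pre-cohesion results nor the content of Menni's Cor.~7.4. It is essentially the direct check the paper itself mentions in the remark after the corollary. The paper's route reuses the cohesion machinery just developed, and the obstruction it exhibits is stronger: $\Omega$ is connected.

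One small slip in your secondary argument via $\nabla$. You wrote $\nabla S(\Sigma)=S^{\Sigma}$, but in fact $\nabla S(\Sigma)\cong\Set(\Gamma\yon\Sigma,S)\cong S$. This is because $\Sigma$ has a single global point, its top (the map $1\to\Sigma$ hitting $0$ is not open). Also, ``the subobject lattices differ'' is not justified as stated. If you want to keep that aside, the clean version is: were $\nabla$ an equivalence, then $\yon\Sigma\cong\nabla\Gamma\yon\Sigma\cong\nabla 1\cong 1$. But $\yon\Sigma(\Sigma)=\Kp(\Sigma,\Sigma)$ has two elements (the identity and the constant map at the top). The main argument does not depend on this aside.
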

\begin{proof}
    By~\cite[Cor. 7.4]{menni2019every}, $\mc K$ is perfect if it is non-Boolean, viz.\ $\Omega \not\cong 1 + 1$ in $\mc K$. This can be directly seen as a consequence of~\cref{sufficientcohesion}, since $\Pi\Omega \cong 1$ while $\Pi(1+1) \cong 2$.
\end{proof}

\begin{remark}
    The fact that $\mc K$ is non-Boolean can also be seen from the concrete description of $\Omega$ in~\cref{subobjectasanerve}. Later in~\cref{thm:subtoposofK}, we in fact will provide a complete classification of subtopoi of $\mc K$, thus proving a much stronger result than $\mc K$ being perfect.
\end{remark}

\section{Nerve construction over the K-topos}\label{sec:nerveonK}

The goal of this section is to explain the relation between the sheaf representation of Heyting algebras as sheaves in the K-topos and their profinite completion. As mentioned in~\cref{sec:intro}, we will follow a categorical approach by describing the \emph{nerve} construction over the K-topos. 

The K-topos $\mc K \simeq \sh(\Kp,J)$ as a sheaf topos over the canonical topology on $\Kp$ is in particular a cocompletion of $\Kp$, though not the free one. Via left Kan extension, suitable functors $\Kp \to \mc E$ will induce nerve adjunctions between $\mc E$ and $\mc K$.

\begin{definition}[Representably continuous functors]
  A functor $F : \Kp \to \mc E$ is \emph{representably $J$-continuous} if for any $e\in\mc E$, $\mc E(F-,e) : \Kp\op \to \Set$ is a $J$-sheaf on $\Kp$. 
\end{definition}

\begin{remark}[Right exact functors are representably continuous]\label{rightexactrep}
    Recall from~\cref{leftexactfuncaresheaf} that left exact functors from $\Kp\op$ to $\Set$ are $J$-sheaves. By Yoneda, a functor $F : \Kp \to \mc E$ preserves finite colimits iff $\mc E(F-,e)$ preserves finite limits for all $e\in\mc E$. In particular, right exact functors from $\Kp$ to $\mc E$ are representably $J$-continuous.
\end{remark}

\begin{proposition}\label{nerve}
  Let $F : \Kp \to \mc E$ be a representably $J$-continuous functor into a cocomplete category. Then there is an induced adjunction
    \[\begin{tikzcd}
    	\Kp & {\mc E} \\
    	{\mc K}
    	\arrow["F", from=1-1, to=1-2]
    	\arrow["\yon"', hook, from=1-1, to=2-1]
    	\arrow[""{name=0, anchor=center, inner sep=0}, "N", curve={height=-8pt}, from=1-2, to=2-1]
    	\arrow[""{name=1, anchor=center, inner sep=0}, "L", curve={height=-8pt}, from=2-1, to=1-2]
    	\arrow["\dashv"{anchor=center, rotate=-40}, draw=none, from=1, to=0]
    \end{tikzcd}\]
  where $L$ is obtained by left Kan extension $L \cong \lan_{\yon}F$, and $N$ is the nerve $Ne \cong \mc E(F-,e)$.
\end{proposition}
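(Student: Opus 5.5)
The plan is to first build the adjunction $L \dashv \widetilde N$ between presheaves and $\mc E$, and then restrict it to sheaves using the representable continuity of $F$.

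\emph{Step 1: the presheaf-level adjunction.} Since $\mc E$ is cocomplete, $F$ has a left Kan extension along the Yoneda embedding $\Kp \hook \psh(\Kp)$. For a presheaf $X$ it is computed by the colimit over the category of elements,
\[ \widetilde L X \cong \ct_{(P,x)\in \elem X} FP. \]
Its right adjoint is the nerve $\widetilde N e = \mc E(F-,e)$. This is the standard chain of natural bijections
\[ \mc E(\widetilde L X, e) \cong \lt_{(P,x)} \mc E(FP,e) \cong \lt_{(P,x)} (\widetilde N e)(P) \cong \psh(\Kp)(X, \widetilde N e). \]
The last step uses the density of representables (co-Yoneda) together with the Yoneda lemma.

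\emph{Step 2: restriction to sheaves.} By hypothesis, $\widetilde N$ lands in $\mc K = \sh(\Kp,J)$, so we may set $N e = \mc E(F-,e) \in \mc K$. Define $L = \widetilde L \circ \iota$, where $\iota : \mc K \hook \psh(\Kp)$ is the full inclusion. Since $\mc K$ is full,
\[ \mc E(L X, e) = \mc E(\widetilde L \iota X, e) \cong \psh(\Kp)(\iota X, \widetilde N e) = \mc K(X, Ne), \]
naturally in $X \in \mc K$ and $e \in \mc E$. This gives $L \dashv N$.

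\emph{Step 3: identifying $L$ as $\lan_{\yon} F$.} Because $J$ is the canonical topology (\cref{canonicaltopology}), representables are sheaves and $\yon : \Kp \hook \mc K$ is fully faithful. Hence
\[ L\yon P = \widetilde L(\yon P) \cong FP. \]
To see that $L$ is the left Kan extension along $\yon : \Kp \to \mc K$, compute natural transformations out of it. For $G : \mc K \to \mc E$,
\[ [\mc K,\mc E](L, G) \cong [\Kp,\mc E](F, G\yon). \]
The argument is as follows. Every sheaf $X$ is the colimit in $\mc K$ of the representables over it, indexed by $\elem X$, since $\mc K$ is a reflective subcategory and representables are dense there. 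Moreover, $L$ preserves this colimit, being a left adjoint, with $L X \cong \ct_{(P,x)} FP$. A transformation $F \nt G\yon$ therefore induces maps $LX \to GX$ via the canonical comparison
\[ \ct_{(P,x)} G\yon P \to GX, \]
and conversely. Alternatively, one can simply use the pointwise formula
\[ (\lan_\yon F)(X) = \ct_{(\yon P \to X)} FP, \]
whose indexing category $\yon\downarrow X \simeq \elem X$ by fullness of $\yon$ and the Yoneda lemma. This matches the formula for $\widetilde L X$ directly.

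\emph{Main obstacle.} The only real subtlety is Step 3: checking that the comma category $\yon \downarrow X$ computed in $\mc K$ agrees with $\elem X$. This needs $\yon$ to land in sheaves, which is exactly what the canonicity of $J$ supplies. Everything else is formal.
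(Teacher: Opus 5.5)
Your proposal is correct and is essentially the paper's argument. The paper runs the same chain of bijections $\mc E(LX,e)\cong\lt_{(P,x)}\mc E(FP,e)\cong\lt_{(P,x)}\mc K(P,Ne)\cong\mc K(X,Ne)$ directly on sheaves, whereas you run it in $\psh(\Kp)$ and then restrict using fullness of $\mc K\hook\psh(\Kp)$; your Step 3 simply makes explicit the identification $\yon\downarrow X\simeq\elem X$ that the paper uses tacitly when it writes the pointwise formula for $\lan_{\yon}F$.
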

\begin{proof}
  By assumption $N$ is well-defined since each $Ne$ is a $J$-sheaf for any $e\in\mc E$. Since $L \cong \lan_{\yon} F$ is a left Kan extension into a cocomplete category, it is computed pointwise,
  \[ LX \cong \ct_{x\in X(P),P\in\Kp}FP. \]
  This way, for any $X\in\mc K$ and $e\in\mc E$ we have the following natural isomorphisms,
  \begin{align*}
    \mc E(LX,e) 
    &\cong \lt_{x\in X(P)}\mc E(FP,e) \\ 
    &\cong \lt_{x\in X(P)}\mc K(P,Ne) \\
    &\cong \mc K(X,Ne)
  \end{align*}
  The first and third isomorphisms hold by the colimit formula of $LX$, and the second isomorphism holds by Yoneda. Thus, we have an adjunction $L \dashv N$. 
\end{proof}

\begin{remark}
    The nerve adjunction was first introduced in~\cite{kan1958functors}, where an arbitrary functor $F : \mc C \to \mc E$ from a small category into a cocomplete category induces an adjunction between $\psh(\mc C)$ and $\mc E$. In modern literature, the left adjoint is often called the \emph{geometric realisation}, and the right adjoint is often called the \emph{nerve}. \cref{nerve} is a generalisation of this construction for \emph{sheaf}, instead of presheaf, categories, thus we borrow the same terminology.
\end{remark}

Notice that since $L$ is the left Kan extension along the fully faithful embedding $\yon$, we in fact have $L\yon \cong F$. The remainder of this section proceeds to discuss various important examples of the nerve construction over $\mc K$.

\subsection{Nerve for the dual category of profinite Heyting algebras}

Let $\Pro(\HA\fn)$ be the category of profinite Heyting algebras, viz. the pro-completion of the category of finite Heyting algebras $\HA\fn$. By~\cite[Ch. VI.2.10]{johnstone1982stone}, $\Pro(\HA\fn)$ also has a concrete description as the category of internal Heyting algebras in the category of Stone spaces. It has been shown in~\cite{RN921} that profinite Heyting algebras can also be characterised via pure lattice-theoretic properties: a Heyting algebra is profinite iff (i) it admits an injection into a product of finite Heyting algebras; (ii) it is complete; and (iii) every element is a join of completely join-irreducible elements. In particular, any Stone topology on a Heyting algebra making the Heyting operations continuous will necessarily be \emph{unique}, and the existence of such a topology implies and is implied by (i--iii) above.

As mentioned in~\cref{sec:intro}, the dual category $\Pro(\HA\fn)\op$ is a reflective subcategory of $\mc K$, and we describe this reflective adjunction as a special case of the nerve adjunction given in~\cref{nerve}. We first mention a more concrete description of the dual category of profinite Heyting algebras. Notice that by construction we have the following equivalences, 
\[ \Pro(\HA\fn)\op \simeq \Ind(\HA\fn\op) \simeq \Ind(\Kp). \]
In fact, it is not hard to directly compute the ind-completion of $\Kp$. \cite{RN921} provides a concrete description of $\Ind(\Kp)$ as the category of \emph{image finite posets} and open maps between them. A poset $X$ is image finite if for all $x\in X$, the set $\dv x$ is finite. Open maps between image finite posets share the same definition as for finite posets in~\cref{sec:K-topos}, as the notion of open map only involves the upset of an element. We will write $\IKp$ for the category of image finite posets with open maps between them. In this concrete description, the duality between profinite Heyting algebras and image finite posets is given by
\[ \mc U : \IKp \simeq \Pro(\HA\fn)\op : \mc J. \]
Here $\mc U$ takes an image finite poset to its lattice of upward closed sets, while $\mc J$ takes a profinite Heyting algebra to the dual poset of completely join-irreducible elements.

Since $\Kp$ has finite colimits, the ind-completion $\IKp \simeq \Ind(\Kp)$ is locally finitely presentable, thus in particular cocomplete, and the inclusion $\Kp \hook \IKp$ preserves finite colimits. By~\cref{rightexactrep} and~\cref{nerve}, we get an adjunction
\[\begin{tikzcd}
	{\mc K} & \IKp
	\arrow[""{name=0, anchor=center, inner sep=0}, "{\geo-}", curve={height=-15pt}, from=1-1, to=1-2]
	\arrow[""{name=1, anchor=center, inner sep=0}, "N", from=1-2, to=1-1]
	\arrow["\dashv"{anchor=center, rotate=-90}, draw=none, from=0, to=1]
\end{tikzcd}\]
As promised, we show this adjunction is a \emph{reflection}, or equivalently $N$ is fully faithful:

\begin{proposition}\label{IKpreflective}
  The nerve adjunction between $\Pro(\HA\fn)\op \simeq \IKp$ and $\mc K$ is a reflection. 
\end{proposition}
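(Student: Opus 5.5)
To show the nerve adjunction $\geo- \dashv N$ is a reflection, I will prove that the counit $\geo{NX} \to X$ is an isomorphism for every $X \in \IKp$. This is equivalent to $N$ being fully faithful.

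First I identify the nerve concretely. For $X \in \IKp$ and $P \in \Kp$ we have $NX(P) = \IKp(P,X)$, the set of open maps $P \to X$. Since $\IKp \simeq \Ind(\Kp)$ and each $P \in \Kp$ is finitely presentable in $\Ind(\Kp)$, writing $X \cong \ct_{i} P_i$ as a filtered colimit of finite posets in $\Kp$ gives
\[ NX \cong \ct_i \IKp(-,P_i) \cong \ct_i \yon P_i \]
as presheaves on $\Kp$. Concretely, one can take the $P_i$ to be the finite upward closed subsets of $X$, since $X$ is image finite. A filtered colimit of representables, computed in presheaves, is a left exact functor $\Kp\op \to \Set$, because filtered colimits commute with finite limits in $\Set$. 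By~\cref{leftexactfuncaresheaf} it is therefore already a $J$-sheaf. Hence $NX$ is also the filtered colimit $\ct_i \yon P_i$ computed in $\mc K$, since sheafification does not change a presheaf that is already a sheaf.

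Next I compute the realisation. The functor $\geo-$ is a left adjoint, so it preserves colimits, and $\geo{\yon P} \cong P$ because $L\yon \cong F$ for the inclusion $F : \Kp \hook \IKp$. Therefore
\[ \geo{NX} \cong \ct_i \geo{\yon P_i} \cong \ct_i P_i \cong X \quad\text{in } \IKp. \]
It remains to check that this isomorphism is the counit. I will do this by tracing the component at each $P_i$: the counit restricted along the colimit injection $\yon P_i \to NX$ is the map $P_i \to X$ that $\yon P_i \to NX$ names under Yoneda, which is exactly the colimit injection $P_i \to X$. So the counit is an isomorphism.

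I expect the main point of care to be the second paragraph, namely that the presheaf-level filtered colimit of representables is already a sheaf, so that $NX$ really is the colimit of the $\yon P_i$ in $\mc K$. \cref{leftexactfuncaresheaf} supplies this. The only remaining thing to confirm is that $\IKp(-,X)$ restricted to $\Kp$ is left exact. Here I mean limits in $\Kp\op$, i.e.\ finite colimits in $\Kp$, which by~\cref{creatingfincolimkp} are computed as in posets. Left exactness follows directly from $\IKp(-,X) \cong \ct_i \Kp(-,P_i)$, because each $\Kp(-,P_i)$ sends finite colimits of $\Kp$ to limits and filtered colimits commute with finite limits. An alternative route to full faithfulness is to check directly that $\mc K(NX,NY) \cong \IKp(X,Y)$ by writing $NX$ as the filtered colimit of representables and applying Yoneda on both sides.
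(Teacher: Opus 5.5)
Your proof is correct, but it takes a different route from the paper. The paper never looks at the counit. Since $\mc K$ is a full subcategory of $[\Kp\op,\Set]$, it writes $\mc K(NX,NY)$ as the end $\int_{P\in\Kp}\Set(\IKp(P,X),\IKp(P,Y))$ of natural transformations. It then uses the standard identification $\IKp\simeq\Ind(\Kp)\simeq\Lex(\Kp\op,\Set)$, under which $X$ corresponds exactly to $\IKp(-,X)$. The end therefore becomes $\Lex(\Kp\op,\Set)(X,Y)\cong\IKp(X,Y)$, so full faithfulness of $N$ is inherited from $\Lex(\Kp\op,\Set)$ being a full subcategory of presheaves.

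You instead show that the counit $\geo{NX}\to X$ is invertible. You present $X$ as the filtered colimit of its finite upsets, identify $NX$ with the corresponding colimit of representables, and push this through the cocontinuous functor $\geo-$. The paper's argument is a two-line consequence of the standard description of $\Ind(\Kp)$ as left exact presheaves, used as a black box. Yours in effect re-proves the part of that description that is needed, namely that finite posets are finitely presentable in $\IKp$. Your finite-upset presentation verifies this concretely, since an open map out of a finite poset has finite, upward closed image. The price is the extra check that the comparison isomorphism really is the counit, which you carry out correctly via the adjuncts of the colimit injections.

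One simplification: the step you single out as the main point of care is automatic. $NX$ lies in $\mc K$ by construction of the nerve, so once $NX\cong\ct_i\yon P_i$ holds in presheaves, this colimit is already a sheaf and hence also the colimit in $\mc K$. Invoking the fact that left exact functors are sheaves is correct but redundant. The alternative you sketch at the end, computing $\mc K(NX,NY)$ directly, is essentially the paper's proof.
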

\begin{proof}
  To show $N : \IKp \to \mc K$ is fully faithful, take $X,Y\in\IKp$. By construction,
  \[ \mc K(N(X),N(Y)) \cong \int_{P\in\Kp}\Set(\IKp(P,X),\IKp(P,Y)). \]
  On the other hand, since $\Kp$ has finite colimits, we also have the following equivalences
  \[ \IKp \simeq \Ind(\Kp) \simeq \Lex(\Kp\op,\Set), \]
  where as a full subcategory of $[\Kp\op,\Set]$, an object $X \in \IKp$ is exactly identified with the functor $P \mapsto \IKp(P,X)$. This implies that
  \[ \int_{P\in\Kp}\Set(\IKp(P,X),\IKp(P,Y)) \cong \Lex(\Kp\op,\Set)(X,Y) \cong \IKp(X,Y). \qedhere \]
\end{proof}

\begin{remark}[Image finite posets as left exact sheaves]\label{filteredIKp}
  By the proof of~\cref{IKpreflective}, the inclusion $\IKp \hook \mc K$ is the unique factorisation of two reflective subcategories of the presheaf category $[\Kp\op,\Set]$,
  \[
  \begin{tikzcd}
    \IKp \ar[r, hook] \ar[dr, hook] & \mc K \ar[d, hook] \\ 
    & {[\Kp\op,\Set]}
  \end{tikzcd}
  \]
  where $\IKp,\mc K$ correspond to left exact and $J$-continuous functors out of $\Kp\op$, respectively. From now on we will identify $\IKp$ as a full subcategory of $\mc K$. The following are equivalent for a sheaf $F$ in $\mc K$:
  \begin{itemize}
    \item $F \cong NX$ for an essentially unique $X\in\IKp$;
    \item $F : \Kp\op \to \Set$ preserves finite limits; 
    \item The category of elements $\elem F$ is cofiltered.
  \end{itemize}
\end{remark}

\begin{remark}[The nerve of profinite Heyting algebras]\label{nerveofProHA}
    Via the dual equivalence 
    \[ \mc U : \IKp \simeq \Pro(\HA\fn)\op : \mc J, \] 
    for any profinite Heyting algebra $A$ its nerve in $\mc K$ is given by
    \[ N\mc JA(P) \cong \IKp(P,\mc JA) \simeq \Pro(\HA\fn)(A,\mc UP). \]
\end{remark}

\begin{example}[Subobject classifier as a profinite Heyting algebra or an image finite poset]\label{subobjectasanerve}
  Let $H[\ms x]$ be the free Heyting algebra on one generator. Concretely, it is the \emph{Rieger-Nishimura lattice} (cf.~\cite{nishimura1960formulas}). It is well-known that $H[\ms x]$ is already profinite; in fact it is isomorphic to its profinite completion (cf.~\cite{RN921}). By~\cref{nerveofProHA}, its nerve is given by
  \[ N\mc JH[\ms x](P) \cong \Pro(\HA\fn)(H[\ms x],\mc UP) \cong \mc UP \cong \Omega(P). \]
  The final isomorphism is calculated in~\cref{subobject}, where $\Omega$ is the subobject classifier in $\mc K$. This shows that $\Omega\in\mc K$ belongs to the reflective subcategory $\Pro(\HA\fn)\op$. Under the equivalence $\Pro(\HA\fn)\op \simeq \IKp$, $\Omega$ can be identified as the \emph{Rieger-Nishimura ladder} below,
  \[\begin{tikzcd}
    \bullet & \bullet \\
    \bullet & \bullet \\
    \bullet & \bullet \\
    \vdots & \vdots
    \arrow[from=2-1, to=1-1]
    \arrow[from=2-1, to=1-2]
    \arrow[from=2-2, to=1-2]
    \arrow[from=3-1, to=2-1]
    \arrow[from=3-1, to=2-2]
    \arrow[from=3-2, to=1-1]
    \arrow[from=3-2, to=2-2]
    \arrow[from=4-1, to=3-1]
    \arrow[from=4-1, to=3-2]
    \arrow[from=4-2, to=2-1]
    \arrow[from=4-2, to=3-2]
  \end{tikzcd}\]
\end{example}

\begin{corollary}\label{closedundersubobjects}
  $\IKp \hook \mc K$ is closed under subobjects in $\mc K$, i.e. for any $X\in\IKp$, if $S\inj X$ is a subobject in $\mc K$, then $S\in\IKp$ as well.
\end{corollary}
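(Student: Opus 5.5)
The plan is to use the characterisation in~\cref{filteredIKp}: a sheaf $F\in\mc K$ lies in $\IKp$ iff, as a functor $\Kp\op\to\Set$, it preserves finite limits. So given $X\in\IKp$ and a subobject $S\inj X$ in $\mc K$, it suffices to show that $S$, viewed as a subfunctor of $X$, is left exact. Finite limits in $\Kp\op$ are finite colimits in $\Kp$, namely the initial object (the empty poset), binary coproducts, and coequalisers. We check each in turn, using that monomorphisms of sheaves are pointwise injective, so $S(P)\subseteq X(P)$ naturally.

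For the empty poset $\emptyset$, the empty sieve on $\emptyset$ is jointly surjective, hence covering. Any sheaf, in particular $S$, therefore satisfies $S(\emptyset)\cong 1$. For binary coproducts $P+Q$, the coprojections $\set{P\hook P+Q,\ Q\hook P+Q}$ form a jointly surjective family, and their monoidal pullback $P\otimes_{P+Q}Q$ is empty. The sheaf condition then gives $S(P+Q)\cong S(P)\times S(Q)$. Thus the only real content is the coequaliser case.

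For a coequaliser $\pi : P\surj P/\theta$ of $f_0,f_1 : R\to P$ as in~\cref{creatingfincolimkp}, we must show that
\[ S(P/\theta)\to S(P)\rightrightarrows S(R) \]
is an equaliser. Injectivity follows from injectivity of $X(\pi)$, since $X$ is left exact, together with $S\subseteq X$. For existence, take $s\in S(P)$ with $f_0^*s=f_1^*s$. Since $s\in X(P)$ and $X$ is left exact, there is a unique $x\in X(P/\theta)$ with $\pi^*x=s$, and we must show $x\in S(P/\theta)$. Here we use that $\pi$ is a surjective open map, so the singleton $\set{\pi}$ generates a $J$-covering sieve. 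Because $S\inj X$ is a subsheaf, it is $J$-closed in $X$: an element $x\in X(P/\theta)$ whose restriction along every map of a covering sieve lies in $S$ already lies in $S$. This holds since the restrictions form a compatible family in $S$, which glues to an element of $S(P/\theta)$ mapping to $x$ by separatedness of $X$. As $\pi^*x=s\in S(P)$ and the sieve generated by $\pi$ is covering, we conclude $x\in S(P/\theta)$.

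The main point is this closedness argument for coequalisers: the observation that regular epis in $\Kp$ are surjective open maps and hence single-map covers. The rest is bookkeeping. An alternative route would use~\cref{supercompact}/\cref{unionofsub}, but the direct left-exactness check above seems cleanest, and we would present it that way.
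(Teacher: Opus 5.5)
Your proof is correct, but it follows a different route from the paper. The paper's argument is two lines. $\IKp$ is a full reflective subcategory of $\mc K$, hence closed under limits, and by \cref{subobjectasanerve} it contains $1$ and $\Omega$. Any $S\inj X$ is the pullback of $\top : 1\to\Omega$ along its classifying map $X\to\Omega$, so it lies in $\IKp$.

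You instead use the description of $\IKp$ as the left exact sheaves from \cref{filteredIKp} and check left exactness of the subsheaf directly. The empty poset and binary coproducts need only the sheaf condition on $S$. The one substantive step is the coequaliser case. There you combine three facts: left exactness of $X$; that the coequaliser map $\pi$ is a surjective open map (\cref{creatingfincolimkp}) and so generates a covering sieve; and that subsheaves are $J$-closed.

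The paper's argument buys brevity. It pays for this by relying on the earlier identification of $\Omega$ as an object of $\IKp$, i.e.\ that $P\mapsto\mc UP$ is left exact. Your argument is more elementary and self-contained, and never uses anything about $\Omega$. It also makes explicit why subobjects cannot leave $\IKp$: the only finite limit in $\Kp\op$ at risk is the one coming from a coequaliser in $\Kp$, and its quotient map is already a cover in the canonical topology.
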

\begin{proof}
  Recall that $\IKp\hook\mc K$ is a full subcategory closed under all limits. Since $\Omega$ and $1$ belong to $\IKp$ and any subobject of $X\in\IKp$ is a pullback of $\top : 1 \to \Omega$ along a map $X \to \Omega$, it follows that the subobject also belongs to $\IKp$.
\end{proof}

\begin{proposition}\label{leftadjointofnerveIKp}
    Under the equivalence $\mc U : \IKp \simeq \Pro(\HA\fn)\op : \mc J$, the left adjoint to the inclusion $\Pro(\HA\fn)\op \hook \mc K$ is given by taking subobjects of sheaves,
    \[\begin{tikzcd}[column sep = 8ex]
    	& \IKp \\
    	{\mc K} \\
    	& {\Pro(\HA\fn)\op}
    	\arrow[""{name=0, anchor=center, inner sep=0}, hook', curve={height=-10pt}, from=1-2, to=2-1]
    	\arrow["\simeq"{description}, tail reversed, from=1-2, to=3-2]
    	\arrow[""{name=1, anchor=center, inner sep=0}, "{\geo-}"{description}, curve={height=-10pt}, from=2-1, to=1-2]
    	\arrow[""{name=2, anchor=center, inner sep=0}, "{\sub_{\mc K}(-)}"{description}, curve={height=-10pt}, from=2-1, to=3-2]
    	\arrow[""{name=3, anchor=center, inner sep=0}, hook', curve={height=-10pt}, from=3-2, to=2-1]
    	\arrow["\dashv"{anchor=center, rotate=-60}, draw=none, from=1, to=0]
    	\arrow["\dashv"{anchor=center, rotate=-120}, draw=none, from=2, to=3]
    \end{tikzcd}\]
\end{proposition}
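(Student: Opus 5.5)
We need to show that for $X \in \mc K$ there is a natural isomorphism $\mc U\geo X \cong \sub_{\mc K}(X)$ of Heyting algebras. Equivalently, for every profinite Heyting algebra $A$ we need a natural bijection
\[ \Pro(\HA\fn)(A,\sub_{\mc K}(X)) \cong \mc K(X, N\mc JA), \]
and we must also check that $\sub_{\mc K}(X)$ is profinite.

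\textbf{Step 1: the case $A = \mc UP$ with $P$ finite.} The plan is to compute $\mc U\geo X$ using the colimit formula for the left Kan extension, $\geo X \cong \ct_{x\in X(P)} P$ over the category of elements $\elem X$, taking the colimit in $\IKp$. The functor $\mc U : \IKp \to \Pro(\HA\fn)\op$ is an equivalence, so it sends this colimit to a limit of profinite Heyting algebras:
\[ \mc U\geo X \cong \lt_{x\in X(P)}\mc UP. \]
By \cref{subobject}, $\mc UP \cong \Omega(P) \cong \sub_{\mc K}(\yon P)$. Moreover $X$ is the colimit of its representables in $\mc K$. Since $\sub_{\mc K}(-)\cong\mc K(-,\Omega)$ turns colimits into limits, we get
\[ \sub_{\mc K}(X) \cong \lt_{x\in X(P)} \sub_{\mc K}(P) \cong \lt_{x\in X(P)}\mc UP. \]
Forgetful functors preserve limits in both settings. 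The remaining check is that the Heyting structure on $\sub_{\mc K}(X)$ agrees with the limit structure in $\Pro(\HA\fn)$, i.e.\ that the restriction maps $f^{-1} : \sub(P)\to\sub(Q)$ along $f : Q \to P$ in $\Kp$ are Heyting homomorphisms matching $\mc U f$. They are Heyting homomorphisms because pullback of subobjects in a topos preserves the Heyting operations. They match $\mc Uf$ because, under the identification in \cref{subobject}, pulling back the closed sieve generated by an upset $U \hook P$ gives the sieve generated by $f^{-1}U$.

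\textbf{Step 2: identifying the limit as a profinite Heyting algebra.} Since $\Pro(\HA\fn)\hook\HA$ preserves limits (profinite algebras are closed under limits in $\HA$, being the algebras in Stone spaces), the limit of Heyting algebras $\lt \mc UP$ is profinite and is the limit in $\Pro(\HA\fn)$. Hence $\sub_{\mc K}(X)\cong\mc U\geo X$ as profinite Heyting algebras.

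\textbf{Step 3: naturality.} For $g : X\to Y$ in $\mc K$, the map $g^{-1}:\sub(Y)\to\sub(X)$ must correspond to $\mc U\geo g$. This holds because both sides are induced by the evident maps of diagrams of elements, and pulling back along $g$ composed with $x : P \to X$ gives pullback along $gx$.

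\textbf{Main obstacle.} The delicate point is Step 1's interchange of colimits in $\IKp$ with limits in $\Pro(\HA\fn)$. The colimit $\ct_{x\in X(P)}P$ is taken over a possibly large, non-filtered category of elements. Making this legitimate requires using cocompleteness of $\IKp$, with $\mc U$ a genuine equivalence converting colimits to limits. I would first verify directly that the formula $LX\cong\ct FP$ from \cref{nerve} applies. Failing that, I would instead check the universal property via the adjunction directly.
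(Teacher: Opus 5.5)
Your argument is correct, but it takes a different route from the paper. The paper uses representability. Since $H[\ms x]$ is profinite, it is the free profinite Heyting algebra on one generator, so the underlying set of $LX$ is $\Pro(\HA\fn)(H[\ms x],LX)\cong\mc K(X,N\mc JH[\ms x])\cong\mc K(X,\Omega)\cong\sub_{\mc K}(X)$, by the nerve adjunction and \cref{subobjectasanerve}. You instead exhibit both sides as the same limit $\lim_{(P,x)\in\elem X}\mc UP$. You get the left side from the colimit formula of \cref{nerve} transported along the duality. You get the right side from density of representables in $\mc K$ together with the identification $\sub_{\mc K}(P)\cong\mc UP$ of \cref{subobject}, whose naturality in $P$ you check correctly.

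Your route avoids the external input that $H[\ms x]$ is profinite, and it matches the Heyting operations explicitly, since each $x^*$ is a Heyting homomorphism. The paper's chain, by contrast, is literally a natural bijection of underlying sets, with compatibility of the operations left implicit. The paper's route is much shorter and gives naturality in $X$ for free, which you must (and do) verify in Step 3.

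Three small remarks. First, the ``main obstacle'' is not one. $\elem X$ is essentially small because $\Kp$ is, $\IKp$ is cocomplete, and \cref{nerve} already proves that $X\mapsto\ct_{x\in X(P)}P$ is the left adjoint, so the equivalence $\mc U$ just turns this colimit into a limit. Second, in Step 2 the right justification is that the forgetful functor $\Pro(\HA\fn)\to\HA$ is right adjoint to profinite completion and hence preserves limits. That functor is faithful but not full: a non-principal ultrafilter on the powerset of $\mathbb N$ gives a Heyting map that is not a profinite morphism. So it is your Step 3 identification $g^*=\mc U\geo g$ that makes $g^*$ a genuine morphism of $\Pro(\HA\fn)$. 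Third, your Step 1 heading ``the case $A=\mc UP$'' does not describe that step, which never uses the hom-set reformulation you announce at the outset and instead computes $\mc U\geo X$ directly.
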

\begin{proof}
    By construction, it suffices to show the left adjoint to the nerve $\Pro(\HA\fn)\op \hook\mc K$ is given by $\sub_{\mc K}(-)$. Since $H[\ms x]$ is profinite, it is also the free profinite Heyting algebra on one generator. Let $L : \mc K \to \Pro(\HA\fn)\op$ be the left adjoint of the nerve. For any $X\in\mc K$,
    \begin{align*}
        LX &\cong \Pro(\HA\fn)(H[\ms x],LX) \\
        &\cong \Pro(\HA\fn)\op(LX,H[\ms x]) \\
        &\cong \mc K(X,\Omega) \\
        &\cong \sub_{\mc K}(X).
    \end{align*}
    The first isomorphism holds by the universal property of $H[\ms x]$; the second isomorphism is formal; the third isomorphism holds by the nerve adjunction and the computation in~\cref{subobjectasanerve} that the nerve of $H[\ms x]$ is $\Omega$. This shows $L$ is indeed given by $\sub_{\mc K}(-)$.
\end{proof}

\begin{corollary}\label{profiniteHAsameassubK}
    A lattice $A$ is a profinite Heyting algebra iff it is isomorphic to the subobject lattice of a sheaf in $\mc K$.
\end{corollary}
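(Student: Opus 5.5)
Both directions follow from \cref{leftadjointofnerveIKp} together with the reflectivity established in \cref{IKpreflective}.

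For the direction from sheaves to profinite algebras, take any sheaf $X\in\mc K$. By \cref{leftadjointofnerveIKp}, the left adjoint $L : \mc K \to \Pro(\HA\fn)\op$ of the nerve sends $X$ to $\sub_{\mc K}(X)$. The chain of isomorphisms in that proof shows more than a bijection: the underlying set of the profinite Heyting algebra $LX$ is identified with $\mc K(X,\Omega)\cong\sub_{\mc K}(X)$. The profinite algebra $LX$ is obtained from $\mc K(X,\Omega)$ by viewing it as $\Pro(\HA\fn)(H[\ms x],LX)$. The Heyting operations on $LX$ are then induced by the co-Heyting structure on $H[\ms x]$, i.e.\ the maps $H[\ms x]\to H[\ms x]+H[\ms x]$ classifying $\ms x\wedge\ms y$, $\ms x\to\ms y$, and so on.

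The one point needing care is that these operations match the lattice and Heyting structure on $\sub_{\mc K}(X)$. Under the nerve, the operations correspond to the internal Heyting algebra maps $\wedge,\vee,\to : \Omega\times\Omega\to\Omega$. This uses that the nerve preserves limits, since it is a right adjoint, so that $N$ of a coproduct $H[\ms x]+H[\ms x]$ in $\Pro(\HA\fn)$ is $\Omega\times\Omega$. It also uses that $\mc K(P,\Omega)\cong\mc UP$ naturally, compatibly with the Heyting structure on $\mc UP$; this can be checked on representables via \cref{subobject}. Since the external operations on $\sub_{\mc K}(X)\cong\mc K(X,\Omega)$ are given by postcomposition with these internal operations, $\sub_{\mc K}(X)\cong LX$ as Heyting algebras, and hence as lattices. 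Consequently $\sub_{\mc K}(X)$ is a profinite Heyting algebra. I expect this compatibility check to be the main obstacle, and it reduces to naturality on representables.

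For the converse, let $A$ be a profinite Heyting algebra and take $X = N\mc JA\in\mc K$. Because the nerve is fully faithful (\cref{IKpreflective}), the counit $LN\mc JA\to\mc JA$ of the reflection is an isomorphism in $\Pro(\HA\fn)\op$. By the previous paragraph this gives $\sub_{\mc K}(N\mc JA)\cong A$. Since a lattice isomorphism between Heyting algebras automatically preserves the Heyting implication, the statement can be phrased purely in terms of lattices, as it is.
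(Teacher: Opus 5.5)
Your proposal is correct and follows the paper's route: the paper gives no separate proof and treats the corollary as immediate from \cref{leftadjointofnerveIKp}, with fullness of the reflection (\cref{IKpreflective}) giving $A\cong\sub_{\mc K}(N\mc JA)$. Your extra check, that the co-operations on $H[\ms x]$ transport under $N$ to the internal operations on $\Omega$ so that $LX\cong\sub_{\mc K}(X)$ is an isomorphism of Heyting algebras and not just of sets, supplies a detail the paper's chain of isomorphisms leaves implicit.
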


\begin{remark}[Profinite Heyting algebras are bi-Heyting algebras]
    \cref{profiniteHAsameassubK} together with~\cref{subKcompdistr} implies that all profinite Heyting algebras are \emph{completely distributive}, thus in particular they are also bi-Heyting algebras. Of course this is also a direct consequence of the dual equivalence $\mc U : \IKp \simeq \Pro(\HA\fn)\op$, as observed in~\cite{RN921}.
\end{remark}

Finally, we observe the following characterisation of when a general nerve construction over $\mc K$ factors through the reflective subcategory $\IKp$. This will be the key to describing the connection between profinite completion and sheaf representation of Heyting algebras in~\cref{subsec:nerveHA}. 

\begin{proposition}\label{representationinIKp}
  Let $F : \Kp \to \mc E$ be representably $J$-continuous. The nerve adjunction $L \dashv N : \mc K \to \mc E$ factors through $\IKp$ as follows iff $F$ preserves finite colimits,
    \[\begin{tikzcd}
    	{\mc K} & \IKp & {\mc E}
    	\arrow[""{name=0, anchor=center, inner sep=0}, curve={height=-18pt}, from=1-1, to=1-2]
    	\arrow[""{name=1, anchor=center, inner sep=0}, hook', from=1-2, to=1-1]
    	\arrow[""{name=2, anchor=center, inner sep=0}, curve={height=-18pt}, from=1-2, to=1-3]
    	\arrow[""{name=3, anchor=center, inner sep=0}, from=1-3, to=1-2]
    	\arrow["\dashv"{anchor=center, rotate=-90}, draw=none, from=0, to=1]
    	\arrow["\dashv"{anchor=center, rotate=-90}, draw=none, from=2, to=3]
    \end{tikzcd}\]
\end{proposition}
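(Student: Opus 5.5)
The plan is to identify both sides of the proposed factorisation concretely and compare them by the universal property of $\IKp \simeq \Ind(\Kp)$. Since the inclusion $\IKp \hook \mc K$ has left adjoint $\geo-$, a factorisation of $L \dashv N$ through $\IKp$ amounts to two things. First, $N$ lands in $\IKp$ (up to isomorphism). Second, $L$ is isomorphic to $L' \circ \geo-$, where $L' \dashv N'$ is an adjunction between $\IKp$ and $\mc E$ with $N \cong \iota N'$. Uniqueness of adjoints makes the second condition automatic once $N$ factors as $\iota N'$ and $N'$ has a left adjoint: indeed $L'\geo- \dashv \iota N' \cong N$. So I will show that $N$ factors through $\IKp$ iff $F$ preserves finite colimits, and that in this case $N'$ has a left adjoint.

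For the direction ($\Leftarrow$), assume $F$ preserves finite colimits. Then for every $e\in\mc E$ the presheaf $Ne = \mc E(F-,e)$ sends finite colimits in $\Kp$ to finite limits in $\Set$, i.e.\ $Ne : \Kp\op \to \Set$ is left exact. By \cref{filteredIKp}, $Ne$ lies in the essential image of $\IKp \hook \mc K$, so $N$ factors as $\iota N'$. For the left adjoint of $N'$, I take $L' \coloneq \lan_{\Kp \hook \IKp} F$, the unique filtered-colimit-preserving extension of $F$ to $\Ind(\Kp)$; it exists because $\mc E$ is cocomplete. The hom-set computation for $X = \ct_i P_i$ filtered in $\IKp$ is
\[ \mc E(L'X,e) \cong \lt_i \mc E(FP_i,e) \cong \lt_i \IKp(P_i,N'e) \cong \IKp(X,N'e). \]
The last step uses that $N'e \in \Lex(\Kp\op,\Set)$ and the Yoneda identification in the proof of \cref{IKpreflective}. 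Hence $L' \dashv N'$, and composing with $\geo- \dashv \iota$ recovers $L \dashv N$ up to isomorphism.

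For the direction ($\Rightarrow$), assume the adjunction factors, so that in particular $Ne \in \IKp$ for each $e$. By \cref{filteredIKp}, each $\mc E(F-,e)$ is then left exact on $\Kp\op$. Thus for any finite colimit $\ct_j P_j$ in $\Kp$, the canonical map $\mc E(F(\ct P_j),e) \to \lt \mc E(FP_j,e)$ is a bijection for all $e$. By Yoneda, the comparison $\ct FP_j \to F(\ct P_j)$ in $\mc E$ is an isomorphism, which is the Yoneda remark already made in \cref{rightexactrep}. So $F$ is right exact.

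The only step needing care is the precise meaning of ``factors'': one must check that the left adjoints agree and not just the right adjoints. As argued above, this reduces to uniqueness of adjoints, so I do not expect a genuine obstacle. The mild technical point is justifying $L'$ as the left adjoint of $N'$ via the filtered-colimit description of $\Ind(\Kp)$, and the displayed computation handles it.
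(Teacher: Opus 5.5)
Your proposal is correct and follows the paper's proof. The paper's proof simply cites \cref{rightexactrep}, the left-exactness characterisation of $\IKp$ in \cref{filteredIKp}, and composition of left Kan extensions. You spell those steps out, and the only variation is that you identify $L$ with $L'\circ\geo-$ by uniqueness of adjoints rather than by composing Kan extensions.
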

\begin{proof}
  This is a direct consequence of~\cref{rightexactrep}, the characterisation of $\IKp$ as a full subcategory in~\cref{filteredIKp}, and the fact that left Kan extensions compose.
\end{proof}

\subsection{Sheaf representation of Heyting algebras as a nerve}\label{subsec:nerveHA}

As mentioned in~\cref{sec:intro},~\cite{ghilardi1995sheaf} has constructed a sheaf representation functor of Heyting algebras,
\[ \kspec : \HA\op \to \mc K, \]
which takes any Heyting algebra $A$ to the following sheaf
\[ \kspec A(P) = \HA(A,\mc UP). \]
In other words, $\kspec A(P)$ is exactly the set of Kripke models of $A$ over the finite Kripke frame $P$. We first notice that this is a special case of the nerve construction:

\begin{lemma}
    The functor $\kspec : \HA\op \to \mc K$ described above is the nerve induced by the inclusion $\mc U : \Kp \simeq \HA\fn\op \hook \HA\op$.
\end{lemma}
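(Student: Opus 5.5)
We apply \cref{nerve} with $\mc E = \HA\op$ and $F$ the composite $\Kp \simeq \HA\fn\op \hook \HA\op$, i.e.\ $F(P) = \mc UP$ regarded as an object of $\HA\op$. Two things need to be checked. First, $F$ must satisfy the hypotheses of \cref{nerve}: $\HA\op$ has to be cocomplete and $F$ has to be representably $J$-continuous. Second, the resulting nerve has to agree with $\kspec$ on objects and on morphisms.

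For the hypotheses, $\HA$ is a variety of algebras and hence complete, so $\HA\op$ is cocomplete. For continuity, we use \cref{rightexactrep}. It is enough to show that $F$ preserves finite colimits, or dually that the inclusion $\HA\fn \hook \HA$ preserves finite limits.

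This is where I expect the only real work to lie, and it is still routine. The terminal object of $\HA$ is the one-element algebra, which is finite. Finite limits in $\HA$ are computed as in $\Set$ because $\HA$ is a variety, and a finite limit of finite algebras is again finite. Therefore the finite limits of $\HA\fn$, which exist by the duality with $\Kp$ recalled in \cref{sec:K-topos}, coincide with those computed in $\HA$. The one subtlety is to confirm that the finite limits in $\HA\fn$ are really the set-theoretic ones, and not some other limits that exist only inside the subcategory. This follows because the set-theoretic limit already lies in $\HA\fn$ and satisfies the universal property there.

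The explicit coequaliser computation in \cref{creatingfincolimkp} is consistent with this conclusion, but it is not needed. With right exactness established, \cref{rightexactrep} shows that $P \mapsto \HA\op(F P, A) = \HA(A,\mc UP)$ is a $J$-sheaf for every $A$.

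Finally, \cref{nerve} gives the nerve
\[ N A (P) \cong \HA\op(FP, A) \cong \HA(A, \mc UP) = \kspec A(P). \]
This isomorphism is natural in $P$, where the functoriality in $P$ comes from precomposing with $\mc U f$. It is also natural in $A$, where the functoriality in $A$ comes from precomposition by a homomorphism $A' \to A$. These two actions are exactly the functorial structure of $\kspec$. So $\kspec \cong N$, and as a consequence $\kspec$ has the left adjoint $\lan_\yon F$.
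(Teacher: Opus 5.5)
Your proposal is correct and follows the paper's proof essentially step for step. The steps are: $\HA\op$ is cocomplete, $\mc U : \Kp \to \HA\op$ is right exact because $\HA\fn \hook \HA$ preserves finite limits, which gives representable $J$-continuity via \cref{rightexactrep}, and then one reads off $\HA\op(\mc UP,A) \cong \HA(A,\mc UP) = \kspec A(P)$. Your extra argument that finite limits in $\HA\fn$ are computed as in $\Set$ only makes explicit a step the paper leaves implicit.
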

\begin{proof}
    Notice that $\HA$ is complete, thus $\HA\op$ is cocomplete. Furthermore, the inclusion $\HA\fn \hook \HA$ preserves finite limits, thus $\mc U : \Kp \hook \HA\op$ preserves finite colimits. By construction, its nerve takes $P\in\Kp$ to the set
    \[ \HA\op(\mc UP,A) \cong \HA(A,\mc UP) \cong \kspec A(P). \]
    This shows $\kspec$ is indeed the nerve functor for the inclusion $\mc U : \Kp \hook \HA\op$.
\end{proof}

Since $\mc U : \Kp \hook \HA\op$ preserves finite colimits,~\cref{representationinIKp} shows that this nerve functor factors through $\IKp \simeq \Pro(\HA\fn)\op$, and the following is straightforward:

\begin{theorem}\label{sheafrepasprocompletion}
    The sheaf representation functor $\kspec : \HA\op \to \mc K$ factors through the reflective subcategory $\IKp$. Under the equivalence $\IKp \simeq \Pro(\HA\fn)\op$, this is exactly the dual of the profinite completion functor $\pf - : \HA \to \Pro(\HA\fn)$,
    \[\begin{tikzcd}
    	\IKp & \\
    	& {\HA\op} \\
    	{\Pro(\HA\fn)\op}
    	\arrow[""{name=0, anchor=center, inner sep=0}, "{\mc U}", curve={height=-10pt}, tail, from=1-1, to=2-2]
    	\arrow["\simeq"{description}, tail reversed, from=1-1, to=3-1]
    	\arrow[""{name=1, anchor=center, inner sep=0}, "\kspec", curve={height=-10pt}, from=2-2, to=1-1]
    	\arrow[""{name=2, anchor=center, inner sep=0}, "{\pf -}", curve={height=-12pt}, from=2-2, to=3-1]
    	\arrow[""{name=3, anchor=center, inner sep=0}, curve={height=-10pt}, tail, from=3-1, to=2-2]
    	\arrow["\dashv"{anchor=center, rotate=-129}, draw=none, from=0, to=1]
    	\arrow["\dashv"{anchor=center, rotate=-45}, draw=none, from=3, to=2]
    \end{tikzcd}\]
\end{theorem}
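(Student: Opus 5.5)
We prove the theorem by applying \cref{representationinIKp} to $F = \mc U : \Kp \hook \HA\op$ and then identifying the factor $\HA\op \to \IKp$ with the profinite completion.

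\textbf{Step 1: the factorisation.} $F$ preserves finite colimits, because $\HA\fn \hook \HA$ preserves finite limits. By \cref{representationinIKp} the nerve $\kspec$ therefore factors as the nerve $N' : \HA\op \to \IKp$ of $F$, regarded as a functor $\Kp \to \HA\op$, followed by the fully faithful inclusion $\IKp \hook \mc K$. Concretely, in the description $\IKp \simeq \Lex(\Kp\op,\Set)$ of \cref{filteredIKp}, the functor $\HA(A,\mc U-) = \kspec A$ is left exact. So it already lies in $\IKp$, and $N'A$ is the image finite poset whose left exact functor is $\kspec A$. The left adjoint of $N'$ is the left Kan extension of $\Kp \hook \HA\op$ along $\Kp \hook \IKp \simeq \Ind(\Kp)$. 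This is the filtered-colimit-preserving extension, which under the duality $\IKp \simeq \Pro(\HA\fn)\op$ is the dual of the forgetful functor $\Pro(\HA\fn) \to \HA$ sending a cofiltered limit of finite Heyting algebras to its limit in $\HA$.

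\textbf{Step 2: identifying the right adjoint.} The profinite completion $\pf- : \HA \to \Pro(\HA\fn)$ is by definition left adjoint to this forgetful functor. Dually, $\pf-\op : \HA\op \to \Pro(\HA\fn)\op$ is right adjoint to the forgetful functor's opposite. Both $N'$ and $\pf-\op$ are then right adjoints to the same functor, so they agree up to unique isomorphism. As a concrete check, the profinite completion of $A$ is the formal cofiltered limit of its finite quotients, i.e.\ the pro-object $A \mapsto \HA(A,-)$ restricted to $\HA\fn$. Under the duality this is exactly the left exact functor $P \mapsto \HA(A,\mc UP)$, which is $\kspec A$.

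\textbf{Main obstacle.} The only delicate point is confirming that the left Kan extension in Step 1 really is the forgetful functor. I would argue this directly: $\IKp \simeq \Ind(\Kp)$ is the free filtered cocompletion, so it suffices to know that the forgetful functor $\Pro(\HA\fn) \to \HA$ preserves cofiltered limits and restricts to the inclusion on $\HA\fn$. The first holds because limits in $\Pro(\HA\fn)$ of profinite algebras are computed as in $\HA$, via the Stone-space description, where the underlying sets are the set-theoretic limits. The second is immediate. Finally, transporting along $\mc U \dashv \mc J$ and composing with the counit yields the canonical map $A \to \sub_{\mc K}(\kspec A) \cong \pf A$, which is consistent with \cref{leftadjointofnerveIKp}.
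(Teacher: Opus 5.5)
Your proof is correct. Its skeleton matches the paper's: identify the left adjoint of the corestricted nerve $\HA\op\to\IKp$ with the opposite of the forgetful functor $\Pro(\HA\fn)\to\HA$, then conclude by uniqueness of adjoints. The difference lies in how you identify that left adjoint.

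The paper computes it by hand with an end, $\IKp(X,\kspec A)\cong\int_{P\in\Kp}\Set(X(P),\HA(A,\mc UP))\cong\HA(A,\IKp(X,\Omega))\cong\HA(A,\mc UX)$. This uses the explicit form $\Omega(P)\cong\mc UP$ and the fact that subobjects of $X$ in $\IKp$ are its upsets. It exhibits the left adjoint directly as $\mc U$, the label in the diagram. It also makes the counit visible as $A\to\IKp(\kspec A,\Omega)\cong\sub_{\mc K}(\kspec A)$, which the subsequent remark uses.

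You instead use only the universal property of $\Ind(\Kp)$: the realisation is the unique extension of $\mc U|_{\Kp}$ preserving the canonical filtered colimits, and the dual forgetful functor is such an extension. This is purely formal, needs neither $\Omega$ nor the description of subobjects, and uses nothing about Heyting algebras beyond $\HA\fn\hook\HA$ preserving finite limits. Your ``concrete check'' in fact makes the identification definitional: under $\Pro(\HA\fn)\simeq\Lex(\HA\fn,\Set)\op$ and $\HA\fn\op\simeq\Kp$, the pro-object $\HA(A,-)|_{\HA\fn}$ is literally $\kspec A$.

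The price of your route is that recovering the concrete label $\mc U$ needs a separate fact. You must know that the underlying Heyting algebra of the profinite algebra dual to $X$ is $\mc UX$, i.e.\ the concrete form of the duality.

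Finally, your ``main obstacle'' is lighter than you suggest. If the forgetful functor is defined as sending a formal cofiltered limit of finite algebras to its limit in $\HA$, it preserves the canonical limits by definition. The Stone-space description is then not needed.
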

\begin{proof}
    We first show that the left adjoint of $\kspec : \HA\op \to \IKp$ is $\mc U : \IKp \to \HA\op$, which associates to an image finite poset its Heyting algebra of upward closed subsets. For any Heyting algebra $A\in\HA$ and any $X\in\IKp$, we have
    \begin{align*}
    \IKp(X,\kspec A) 
    &\cong \int_{P\in\Kp}\Set(X(P),\HA(A,\rg P)) \\ 
    &\cong \int_{P\in\Kp}\HA(A,\Set(X(P),\rg P)) \\
    &\cong \HA(A,\int_{P\in\Kp}\Set(X(P),\rg P)) \\
    &\cong \HA(A,\IKp(X,\Omega)) \\
    &\cong \HA(A,\mc UX)
    \end{align*}
    The first isomorphism holds by the end formula of natural transformations; the second holds by currying; the third holds since representable functors preserve ends; the fourth holds by the explicit computation of the subobject classifier in~\cref{subobject}; the final one holds since subobjects of $X$ in $\IKp$ are exactly given by its upward closed subsets. This implies $\mc U$ is indeed the left adjoint of $\kspec$.

    Now under the equivalence $\mc U : \Pro(\HA\fn)\op \simeq \IKp$, this implies the induced left adjoint is simply the inclusion $\Pro(\HA\fn)\op \inj \HA\op$. Notice that the profinite completion functor is left adjoint to the forgetful functor
    \[\begin{tikzcd}
    {\Pro(\HA\fn)} & \HA
    \arrow[""{name=0, anchor=center, inner sep=0}, tail, from=1-1, to=1-2]
    \arrow[""{name=1, anchor=center, inner sep=0}, "{\widehat-}", curve={height=-15pt}, from=1-2, to=1-1]
    \arrow["\dashv"{anchor=center, rotate=90}, draw=none, from=1, to=0]
    \end{tikzcd}\]
    By the uniqueness of adjoints, under the equivalence $\Pro(\HA\fn)\op \simeq \IKp$, the nerve functor $\kspec$ must thus be equivalent to the dual of the profinite completion.
\end{proof}

\begin{remark}[Sheaf representation as profinite completion]
    Now we can fulfil our promise in~\cref{sec:intro}. In~\cite{ghilardi1995sheaf}, one studies a Heyting algebra $A$ by mapping it into the subobject lattice of its sheaf representation,
    \[ A \to \sub_{\mc K}(\kspec A). \]
    By~\cref{sheafrepasprocompletion}, this map is exactly the counit of the adjunction $\mc U : \IKp \leftrightarrows \HA\op : \kspec$. Furthermore, under the equivalence $\IKp \simeq \Pro(\HA\fn)\op$, this counit is equivalently the unit for the adjunction $\pf- : \HA \leftrightarrows \Pro(\HA\fn)$, i.e. the map is given by the profinite completion of the Heyting algebra $A$,
    \[ A \to \sub_{\mc K}(\kspec A) \cong \pf A. \]
    If $A$ is finitely presented, then by the finite model property of IPC, this map will indeed be an embedding. Thus, by the result in~\cite{ghilardi1995sheaf}, uniform interpolation can indeed be obtained via restricting the adjoints between profinite Heyting algebras.
\end{remark}

\begin{remark}[Bellissima's representation of Heyting algebras]
    In fact, the nerve functor $\kspec : \HA\op \to \IKp$ also agrees with Bellissima's representation of finitely generated free Heyting algebras given in~\cite{bellissima1986finitely}. For $H[n]$, \emph{loc.\ cit.}\ constructs a poset, in fact an image finite poset, where $H[n]$ embeds into the upward closed subsets of this poset. The lattice of upward closed subsets of this image finite poset is again the profinite completion of $H[n]$, as explained in~\cite{RN939}. Thus, the nerve functor $\kspec$ is indeed a generalisation of Bellissima's construction.
\end{remark}

\subsection{Nerve for posets}\label{posetnerve}

In fact, another important construction in~\cite{ghilardi1995sheaf} can also be described as a special case of the general nerve construction, which is the nerve for \emph{posets}. Consider the inclusion $\Kp \inj \Pos\fn \hook \Pos$. \cref{creatingfincolimkp} shows this preserves finite colimits. By~\cref{representationinIKp}, we will write the nerve functor as
\[ \uv - : \Pos \to \IKp, \]
where for any poset $L$, the sheaf $\uv L$ is given by 
\[ \uv L(P) \cong \Pos(P,L). \]

\begin{proposition}\label{nerveforposet}
    $\uv - : \Pos \to \IKp$ is the right adjoint of the inclusion functor $\IKp \inj \Pos$.
    \[\begin{tikzcd}
    	\IKp & \Pos
    	\arrow[""{name=0, anchor=center, inner sep=0}, curve={height=-12pt}, tail, from=1-1, to=1-2]
    	\arrow[""{name=1, anchor=center, inner sep=0}, "{\uv-}", from=1-2, to=1-1]
    	\arrow["\dashv"{anchor=center, rotate=-89}, draw=none, from=0, to=1]
    \end{tikzcd}\]
\end{proposition}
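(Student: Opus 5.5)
The plan is to show directly that for $X\in\IKp$ and $L\in\Pos$ there is a bijection $\IKp(X,\uv L)\cong\Pos(X,L)$, natural in both variables, where on the right $X$ is regarded as a poset via the inclusion $\IKp\inj\Pos$. Since $\IKp\simeq\Ind(\Kp)$, every $X\in\IKp$ is a filtered colimit $\ct_i P_i$ of finite posets $P_i\in\Kp$. Concretely, one may take the finite open subposets $\dv F$ for finite $F\subseteq X$; these are finite because $X$ is image finite, and their inclusions into $X$ are open maps. By~\cref{filteredIKp} the inclusion $\IKp\hook\mc K$ is fully faithful, so $\uv L$ may be regarded as the left exact sheaf $P\mapsto\Pos(P,L)$. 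Morphisms out of $X$ in $\IKp$ are then computed as
\[ \IKp(X,\uv L)\cong\lt_i\IKp(P_i,\uv L)\cong\lt_i\uv L(P_i)\cong\lt_i\Pos(P_i,L), \]
using the Yoneda lemma for representables in $\Lex(\Kp\op,\Set)$.

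It then remains to identify $\lt_i\Pos(P_i,L)$ with $\Pos(X,L)$. This amounts to showing that the inclusion $\IKp\inj\Pos$ preserves the filtered colimit $X\cong\ct_i P_i$, or at least that $X$ is the colimit in $\Pos$ of its finite open subposets $\dv F$. This is the main step. I would check it by hand. Every element of $X$ lies in some $\dv\{x\}$, and any two $\dv F$, $\dv G$ are contained in $\dv(F\cup G)$, so the diagram is directed. The order on $X$ is detected inside a single $\dv\{x\}$, since $x\le y$ implies $y\in\dv x$. Consequently a compatible family of monotone maps $\dv F\to L$ glues uniquely to a monotone map $X\to L$. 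A small check is also needed: the canonical filtered diagram presenting $X$ in $\Ind(\Kp)$ is cofinal with, or can be replaced by, this directed system of inclusions. This holds because every map $P\to X$ from a finite poset factors through $\dv(\mathrm{im})$.

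Finally I would verify naturality. In $L$ it is clear, since both sides are postcomposition. In $X$, an open map $f:X\to Y$ sends $\dv F$ into $\dv f(F)$, so restriction along $f$ is compatible with the limit formula. Together these give the adjunction, with the inclusion $\IKp\inj\Pos$ as the left adjoint of $\uv-$. The counit is evaluation $\uv L\to L$: an element is sent to the image of the unique point of $1$ under the corresponding map $1\to\uv L$, i.e.\ an element of $\Pos(1,L)\cong L$. The unit $X\to\uv X$ is given by $P\mapsto(\IKp(P,X)\hook\Pos(P,X))$.
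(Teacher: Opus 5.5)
Your argument is correct, but it reaches the adjunction by a different route from the paper. The paper does not compute hom-sets directly. It takes the left adjoint of $\uv-$ to be, by the nerve construction, the left Kan extension $X\mapsto\ct_{f\in X(P)}P$ computed in $\Pos$. It then identifies this colimit with $X$ by proving that the inclusion $\IKp\inj\Pos$ preserves \emph{all} colimits. Coproducts are clear, and coequalisers are handled by rerunning the construction of \cref{creatingfincolimkp} for image finite posets: openness gives the zig-zag lifting, and finiteness of $\dv p$ makes the chain $p\le q_0\le p_1\le\cdots$ stabilise for antisymmetry.

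You instead observe that only one filtered colimit matters: $X$ as the directed union of its finite up-sets $\dv F$. Its preservation by $\IKp\inj\Pos$ is a one-line check, because any instance $x\le y$ already lives inside $\dv x$. Your cofinality remark is exactly what connects this system to the paper's canonical diagram indexed by $f\in X(P)$.

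What your route buys:
\begin{itemize}
\item It is more elementary and never needs to extend the coequaliser argument beyond finite posets.
\item It makes the bijection explicit, as $\alpha\mapsto\bigl(x\mapsto\alpha_{\dv x}(\dv x\hook X)(x)\bigr)$.
\item It recovers the paper's intermediate claim that $\IKp\inj\Pos$ is cocontinuous for free, since left adjoints preserve all colimits.
\end{itemize}
In exchange, the paper's route stays inside the uniform nerve/Kan-extension framework used throughout the paper, and it describes coequalisers in $\IKp$ explicitly.

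There is one small slip in your last paragraph. A map $1\to\uv L$ in $\IKp$ must be open, so it picks out only a \emph{maximal} element of $\uv L$. Describing the counit through global points therefore defines it only on maximal elements. For a general $u\in\uv L$, the open inclusion $\dv u\hook\uv L$ corresponds to some $h_u\in\uv L(\dv u)=\Pos(\dv u,L)$, and the counit sends $u$ to $h_u(u)$. In other words, the counit is evaluation at the root, which agrees with your description exactly when $\dv u=\{u\}$. This does not affect the proof, since the adjunction is already established by your natural bijection.
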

\begin{proof}
    By construction, the left adjoint $\IKp \to \Pos$ is computed by the left Kan extension,
    \[\begin{tikzcd}
    	\Kp & \Pos \\
    	\IKp
    	\arrow[tail, from=1-1, to=1-2]
    	\arrow[hook, from=1-1, to=2-1]
    	\arrow["L"', from=2-1, to=1-2]
    \end{tikzcd}\]
    In particular, for any $X\in\IKp$, $LX \cong \ct_{f\in X(P)}P$. But observe from the proof of~\cref{creatingfincolimkp} that the inclusion $\IKp \inj \Pos$ preserves all colimits: it is evident for arbitrary coproducts; for coequalisers, the same proof applies since the proof only depends on maps being open and the fact that for any coequaliser $R \rightrightarrows P$ and for any $p\in P$, the poset $\dv p$ is finite. This implies that the colimit $\ct_{f\in X(P)}P$ computed in $\Pos$ is the same as computed in $\IKp$, thus the left adjoint $L$ is indeed given by the inclusion $\IKp \inj \Pos$.
\end{proof}

\begin{remark}[An explicit construction of the nerve for posets]
    One can mimic the step-by-step construction presented in~\cite{almeida2024colimits} to provide a concrete description of the right adjoint $\uv- : \Pos \to \IKp$ on the level of posets. However, this construction involves a complex transfinite process. The nerve construction presented in this paper is quite straightforward by comparison.
\end{remark}

\begin{remark}[Profinite distributive lattices and profinite Heyting algebras]
    Just as $\IKp$ is the dual category of profinite Heyting algebras, we also have the following equivalences,
    \[ \Pro(\DL\fn)\op \simeq \Ind(\DL\fn\op) \simeq \Ind(\Pos\fn) \simeq \Pos. \]
    In other words, $\Pos$ is the dual category of profinite distributive lattices. As with the case of profinite Heyting algebras, profinite distributive lattices can be concretely described as internal distributive lattices in the category of Stone spaces (cf.~\cite{johnstone1982stone}). In this case, the nerve adjunction in~\cref{nerveforposet} dually corresponds to the following adjunction,
    \[\begin{tikzcd}
    	{\Pro(\HA\fn)} & {\Pro(\DL\fn)}
    	\arrow[""{name=0, anchor=center, inner sep=0}, "U"', tail, from=1-1, to=1-2]
    	\arrow[""{name=1, anchor=center, inner sep=0}, "F"', curve={height=15pt}, from=1-2, to=1-1]
    	\arrow["\dashv"{anchor=center, rotate=-90}, draw=none, from=1, to=0]
    \end{tikzcd}\]
    where $U$ is the forgetful functor and $F$ is the free functor. One can show this adjunction is \emph{monadic}, thus the adjunction between $\IKp$ and $\Pos$ is \emph{comonadic}. 
\end{remark}

\begin{example}[Subobject classifier as nerve of a poset]\label{RN}
  The subobject classifier $\Omega$ can also be identified as $\uv{\Sigma}$ for the Sierpi\'nski poset $\Sigma$. In fact, for any $P\in\Kp$, by~\cref{subobject}
  \[ \Omega(P) \cong \mc UP \cong \Pos(P,\Sigma) \cong \uv{\Sigma}(P). \]
\end{example}

We end this section by summarising the relationship between the nerve for Heyting algebras and nerve for posets:

\begin{proposition}\label{decompspec}
  The nerve adjunction between $\IKp$ and $\DL\op$ can be decomposed as that for $\Pos$ and also that for $\HA\op$ as follows,
  \[\begin{tikzcd}
    & \Pos \\
    \IKp && {\DL\op} \\
    & {\HA\op}
    \arrow[""{name=0, anchor=center, inner sep=0}, "{{\uv-}}"{description}, curve={height=-12pt}, from=1-2, to=2-1]
    \arrow[""{name=1, anchor=center, inner sep=0}, "\rg"{description}, curve={height=-12pt}, from=1-2, to=2-3]
    \arrow[""{name=2, anchor=center, inner sep=0}, curve={height=-12pt}, tail, from=2-1, to=1-2]
    \arrow[""{name=3, anchor=center, inner sep=0}, "\rg"{description}, curve={height=-12pt}, from=2-1, to=3-2]
    \arrow[""{name=4, anchor=center, inner sep=0}, "\spec"{description}, curve={height=-12pt}, from=2-3, to=1-2]
    \arrow[""{name=5, anchor=center, inner sep=0}, "F"{description}, curve={height=-12pt}, from=2-3, to=3-2]
    \arrow[""{name=6, anchor=center, inner sep=0}, "\kspec"{description}, curve={height=-12pt}, from=3-2, to=2-1]
    \arrow[""{name=7, anchor=center, inner sep=0}, "U"{description}, curve={height=-12pt}, from=3-2, to=2-3]
    \arrow["\dashv"{anchor=center, rotate=-46}, draw=none, from=2, to=0]
    \arrow["\dashv"{anchor=center, rotate=-140}, draw=none, from=3, to=6]
    \arrow["\dashv"{anchor=center, rotate=-135}, draw=none, from=4, to=1]
    \arrow["\dashv"{anchor=center, rotate=-38}, draw=none, from=7, to=5]
  \end{tikzcd}\]
  Here $F \dashv U : \HA \to \DL$ is the free-forgetful adjunction between distributive lattices and Heyting algebras; $\spec : \DL\op \to \Pos$ takes a distributive lattice $D$ to the poset $\DL(D,\Sigma)$, and its left adjoint $\rg$ again takes a poset to the lattice of upward closed subsets.
\end{proposition}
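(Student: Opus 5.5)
The approach is to regard the diagram as consisting of four adjunctions, each already available, and to check that the composites agree. By uniqueness of adjoints, it is enough to match either the left adjoints or the right adjoints.

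First, fix the nerve adjunction between $\IKp$ and $\DL\op$. The composite $\Kp \inj \Pos\fn \xrightarrow{\rg} \DL\fn\op \hook \DL\op$ preserves finite colimits. It is the first inclusion by~\cref{creatingfincolimkp}, Birkhoff duality, and the fact that $\DL\fn \hook \DL$ preserves finite limits. So~\cref{rightexactrep}, \cref{nerve} and~\cref{representationinIKp} give an adjunction $L \dashv N : \DL\op \to \IKp$ with $N D(P) \cong \DL\op(\rg P, D) \cong \DL(D, \rg P)$.

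Second, identify the two routes for the right adjoint.
\begin{itemize}
\item Upper route $\uv{\spec D}$: evaluated at $P$, it is $\Pos(P, \DL(D,\Sigma))$. Currying gives $\DL(D, \Pos(P,\Sigma))$, and $\Pos(P,\Sigma) \cong \rg P$ as distributive lattices, since the pointwise structure of $\Sigma^P$ is exactly that of upsets.
\item Lower route $\kspec FD$: evaluated at $P$, it is $\HA(FD, \rg P) \cong \DL(D, U\rg P) = \DL(D,\rg P)$.
\end{itemize}
Both are naturally isomorphic to $ND(P)$, naturally in $P \in \Kp$ and in $D$. So $N \cong \uv- \circ \spec \cong \kspec \circ F$ as functors $\DL\op \to \IKp$. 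Note that $F$ here is the free functor $\DL \to \HA$, which becomes a right adjoint $\DL\op \to \HA\op$ of $U$, consistent with the diagram's direction.

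Third, record the component adjunctions, all of which are already in hand:
\begin{itemize}
\item $\IKp \inj \Pos \dashv \uv-$, by~\cref{nerveforposet};
\item $\rg \dashv \spec$ between $\Pos$ and $\DL\op$, the standard dual adjunction: monotone maps $P \to \DL(D,\Sigma)$ correspond to lattice maps $D \to \rg P$;
\item $\rg \dashv \kspec$ between $\IKp$ and $\HA\op$, by the proof of~\cref{sheafrepasprocompletion};
\item $U \dashv F$ between $\HA\op$ and $\DL\op$, the dual of the free-forgetful adjunction.
\end{itemize}
Composing gives two left adjoints of $N$: $\rg \circ \mathrm{incl}$ and $U \circ \rg$. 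Both send an image-finite poset $X$ to its distributive lattice of upsets. This is consistent with the right-adjoint computation and yields the decomposition.

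The main obstacle is keeping the variances and naturality straight. Specifically, one must check that the currying isomorphism $\Pos(P,\DL(D,\Sigma)) \cong \DL(D,\rg P)$ is natural in open maps $P$ (precomposition on both sides), and that the lattice structure on $\Pos(P,\Sigma)$ really matches $\rg P$ under reverse image. Once these are verified, the argument is a formal application of uniqueness of adjoints.
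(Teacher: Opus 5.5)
Your argument is correct. It rests on the same principle as the paper's proof, uniqueness of adjoints, but you run it mainly on the right-adjoint side.

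\emph{The paper's route.} The paper's proof is a one-liner about left adjoints. By \cref{nerveforposet} and the proof of \cref{sheafrepasprocompletion}, both composites $\IKp\to\DL\op$ are literally ``take the lattice of upsets''. Hence the two composite adjunctions coincide.

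\emph{Your route.} You first set up the nerve $N$ of $P\mapsto\rg P$, checking finite-colimit preservation so that \cref{representationinIKp} applies. You then match right adjoints pointwise on $\Kp$, via currying and the free--forgetful adjunction:
\[
\uv{\spec D}(P)\cong\DL(D,\rg P)\cong\kspec FD(P).
\]

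\emph{What each buys.} The paper's route needs no computation, because the left adjoints were already identified. However, it leaves implicit why the common composite is \emph{the} nerve adjunction of $\Kp\to\DL\op$. That follows, for instance, because the composite left adjoint restricts to $\rg$ on $\Kp$, so by Yoneda its right adjoint is $N$. Your route makes this identification explicit. Your right-adjoint computation is exactly the content of the remark the paper places immediately after this proposition, on nerves of free Heyting algebras. Once the right adjoints are matched, your closing check on the left adjoints is redundant, though harmless.
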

\begin{proof}
  We have seen the two composite left adjoints from $\IKp$ to $\DL\op$ both take $X\in\IKp$ to $\rg X$, thus the diagram commutes by uniqueness of adjoints.
\end{proof}

\begin{remark}[Nerve for free Heyting algebras as nerve for posets]
    Let $D$ be a distributive lattice and let $FD$ be the free Heyting algebra generated by $D$. For any $P\in\Kp$, we can compute that
    \[ \kspec FD (P) \cong \HA(FD,\mc UP) \cong \DL(D,\mc UP) \cong \Pos(P,\spec D) \cong \uv{\spec D}(P). \]
    Thus, the nerve $\kspec FD$ for a \emph{free} Heyting algebra is isomorphic to the nerve of the \emph{poset} $\spec D$. In particular, for a finite poset $L$, $\uv L$ can be used to present the finitely generated free Heyting algebra over $\mc UL$. This is the basis for analysing general finitely presented Heyting algebras in~\cite{ghilardi1995sheaf}.
\end{remark}

\section{Internal logic of the K-topos and properties of uniform interpolation}\label{sec:internaluniform}

To apply the nerve construction given in~\cref{sec:nerveonK} to the study of uniform interpolation, or more generally adjoints of maps between profinite Heyting algebras, we first explain how these adjoints can be described in the K-topos $\mc K$. Let $A$ be a profinite Heyting algebra and let $A[\pf{\ms x}]$ be the free profinite Heyting algebra over $A$ generated by one object. Categorically, $A[\pf{\ms x}]$ can be realised as the following coproduct in $\Pro(\HA\fn)$
\[ A[\pf{\ms x}] \cong H[\ms x] \pf{\oplus} A, \]
where $\pf{\oplus}$ denote the coproduct in $\Pro(\HA\fn)$. Dually, $A[\pf{\ms x}]$ is computed by a product in $\mc K$,
\[ \mc J(A[\pf{\ms x}]) \cong \mc J(H[\ms x] \pf{\oplus} A) \cong \Omega \times \mc JA. \]
This holds since $\Pro(\HA\fn)\op \hook\mc K$ is a right adjoint, and thus it takes colimits in $\Pro(\HA\fn)$ to limits in $\mc K$, together with~\cref{subobjectasanerve} that $\mc JH[\ms x] \cong \Omega$. Furthermore, the canonical inclusion $A \inj A[\pf{\ms x}]$ now corresponds to the projection
\[ \pi : \Omega \times \mc J A \to \mc J A. \]
The adjoints of this inclusion can then be described as external existential and universal quantifiers in the K-topos $\mc K$:

\begin{lemma}\label{subobjadjoint}
    For any profinite Heyting algebra $A$, the canonical inclusion $A \inj A[\pf{\ms x}]$ corresponds to the pullback of subobjects along the projection $\pi : \Omega \times \mc JA \to \mc JA$, thus its left and right adjoints correspond to the existential and universal quantifiers on subobjects,
    \[\begin{tikzcd}
    	{A} & {A[\pf{\ms x}]} \\
    	{\sub_{\mc K}(\mc JA)} & {\sub_{\mc K}(\Omega \times \mc JA)}
    	\arrow[""{name=0, anchor=center, inner sep=0}, tail, from=1-1, to=1-2]
    	\arrow["\cong"', from=1-1, to=2-1]
    	\arrow[""{name=1, anchor=center, inner sep=0}, "\Forall"{description}, curve={height=-12pt}, from=1-2, to=1-1]
    	\arrow[""{name=2, anchor=center, inner sep=0}, "\Exists"{description}, curve={height=12pt}, from=1-2, to=1-1]
    	\arrow["\cong", from=1-2, to=2-2]
    	\arrow[""{name=3, anchor=center, inner sep=0}, "{\pi\inv}"{description}, tail, from=2-1, to=2-2]
    	\arrow[""{name=4, anchor=center, inner sep=0}, "{\forall_{\pi}}"{description}, curve={height=-12pt}, from=2-2, to=2-1]
    	\arrow[""{name=5, anchor=center, inner sep=0}, "{\exists_{\pi}}"{description}, curve={height=12pt}, from=2-2, to=2-1]
    	\arrow["\dashv"{anchor=center, rotate=-90}, draw=none, from=0, to=1]
    	\arrow["\dashv"{anchor=center, rotate=-90}, draw=none, from=2, to=0]
    	\arrow["\dashv"{anchor=center, rotate=-90}, draw=none, from=3, to=4]
    	\arrow["\dashv"{anchor=center, rotate=-90}, draw=none, from=5, to=3]
    \end{tikzcd}\]
\end{lemma}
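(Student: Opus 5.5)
The plan is to reduce the claim to the naturality of the reflection $\sub_{\mc K}(-) \dashv (\Pro(\HA\fn)\op \hook \mc K)$ from \cref{leftadjointofnerveIKp}, applied to the map $\pi$. The adjoint statement then follows formally.

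First, identify the vertical isomorphisms. For a profinite Heyting algebra $B$, regarded via its nerve $\mc JB \in \IKp \subseteq \mc K$, the counit of the reflection gives $\sub_{\mc K}(\mc JB) \cong B$; this is the general isomorphism $\sub_{\mc K}(X) \cong LX$ computed in \cref{leftadjointofnerveIKp}, composed with $LN \cong \mr{id}$ from \cref{IKpreflective}. Concretely, an element $b\in B$ corresponds to a map $\mc JB \to \Omega = \mc J H[\ms x]$, which by full faithfulness is a Heyting morphism $H[\ms x] \to B$, and hence a subobject of $\mc JB$. Apply this to $B=A$ and to $B = A[\pf{\ms x}]$, using $\mc J(A[\pf{\ms x}]) \cong \Omega\times \mc JA$, which holds because the nerve is a right adjoint and so sends the coproduct $H[\ms x]\pf{\oplus}A$ to a product.

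Second, check that the square with horizontal maps $A \inj A[\pf{\ms x}]$ and $\pi\inv$ commutes. By Yoneda, a subobject of $\mc JA$ is a map $\chi : \mc JA \to \Omega$, and $\pi\inv$ is precomposition $\chi \mapsto \chi\circ\pi$. On the algebraic side, a map $\mc JA\to\Omega$ is a morphism $h:H[\ms x]\to A$ picking out $h(\ms x)$. Precomposing with $\pi = \mc J(\iota)$ gives $\mc J(\iota\circ h)$, which picks out $\iota(h(\ms x))$. So the square commutes by functoriality of the isomorphism $\Pro(\HA\fn)(H[\ms x],B)\cong \mc K(\mc JB,\Omega)$ in $B$. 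This naturality is the only step that needs care; it comes directly from the nerve adjunction being natural in both variables. I also need that the correspondence between $\mc K(X,\Omega)$ and $\sub_{\mc K}(X)$ is the usual pullback of $\top$, so that precomposition really is the pullback of subobjects.

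Third, since the vertical maps are order isomorphisms (lattice isomorphisms, because they are isomorphisms of Heyting algebras), and adjoints are unique, the left and right adjoints of $\iota$ are transported to $\exists_\pi$ and $\forall_\pi$. These exist in the topos $\mc K$, so $\Exists$ and $\Forall$ exist and correspond to them. Alternatively, the existence of $\Exists$ and $\Forall$ already follows from completeness, as noted in the introduction, and uniqueness of adjoints identifies them. I expect no serious obstacle beyond the bookkeeping of the naturality in the second step.
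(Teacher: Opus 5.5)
Your proposal is correct and is essentially the paper's argument. The paper proves the lemma in one line, as a direct consequence of \cref{leftadjointofnerveIKp} (the reflection onto $\Pro(\HA\fn)\op$ is $\sub_{\mc K}(-)$), together with the preceding identification of $A \inj A[\pf{\ms x}]$ with $\pi : \Omega \times \mc JA \to \mc JA$; your naturality check of the square and your transport of $\exists_\pi \dashv \pi\inv \dashv \forall_\pi$ along the vertical isomorphisms simply spell out that step.
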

\begin{proof}
    This is a direct consequence of~\cref{leftadjointofnerveIKp}.
\end{proof}

The adjoints in~\cref{subobjadjoint} between subobjects of powers of $\Omega$ can be \emph{internalised} in $\mc K$. For any $X\in\mc K$, let $\mc P(X) \coloneq \Omega^X$ be the power object in $\mc K$. Then we have the following \emph{internal adjunctions} in $\mc K$:
\[\begin{tikzcd}
	{\mc P(\mc J A)} & {\mc P(\Omega \times \mc JA)}
	\arrow[""{name=0, anchor=center, inner sep=0}, "{{\pi\inv}}"{description}, tail, from=1-1, to=1-2]
	\arrow[""{name=1, anchor=center, inner sep=0}, "{{\forall_{\pi}}}", curve={height=-12pt}, from=1-2, to=1-1]
	\arrow[""{name=2, anchor=center, inner sep=0}, "{{\exists_{\pi}}}"', curve={height=12pt}, from=1-2, to=1-1]
	\arrow["\dashv"{anchor=center, rotate=-90}, draw=none, from=0, to=1]
	\arrow["\dashv"{anchor=center, rotate=-90}, draw=none, from=2, to=0]
\end{tikzcd}\]
We may thus ask whether an external property for $\Exists,\Forall$ between profinite Heyting algebras holds internally for the maps $\exists_\pi,\forall_\pi : \mc P(\Omega \times \mc JA) \to \mc P(\mc JA)$ in $\mc K$. As mentioned in~\cref{sec:intro}, we will mainly look at two properties, which will be discussed in~\cref{internallyinjective} and~\cref{irreduciblesubobj}.

However, before that let us comment on our general strategy. It is clear from above that these internal adjoints are quantifying over $\Omega$. We will thus first establish various properties of $\Omega$ in the internal language of $\mc K$. This relies on the internal logic of the topos, including Kripke-Joyal semantics (see e.g.~\cite[Sec. VI.6]{maclane2012sheaves}) and also the stack semantics~\cite{shulman2010stack}, which translates internal properties in a topos to externally verifiable statements about the site generating the topos. 

After establishing the relevant internal properties of $\Omega$, we will then proceed with an argument directly in the \emph{internal language} of $\mc K$, to establish the corresponding properties of these adjoints quantifying over $\Omega$. It is well-known that constructive reasoning is valid in the internal language of any topos, thus any constructive consequences of the established properties of $\Omega$ will also be valid in the internal logic of $\mc K$.

From an external perspective, a proof in the internal language amounts to a \emph{parametrised proof} in all the slices. Thus, for instance, declaring a proposition $\varphi$ in the internal language amounts externally to declare an arbitrary \emph{generalised element} $\varphi : X \to \Omega$. Hence in principle, an internal argument can be faithfully translated into the external language. However, an internal language argument could sometimes be more streamlined since one can pretend to work with ordinary sets and functions, as long as one remembers to use only constructive principles.

The reason we choose to prove the properties of these adjoints in two steps, viz.\ first by proving some general properties of $\Omega$, and then apply it in an internal argument to derive properties about the adjoints, rather than directly, is simply that we think the internal properties of $\Omega$ are of independent interests themselves, and could potentially find other applications outside the scope of uniform interpolation.

\subsection{Propositions are internally projective and the subobject classifier is internally injective}\label{internallyinjective}

In this subsection we establish the dual Frobenius property mentioned in~\cref{sec:intro}. From a topos-theoretic perspective, this is connected to \emph{projectivity of propositions} and \emph{injectivity of $\Omega$} in $\mc K$.

\begin{definition}[Internal projectivity]\label{def:internalproj}
    In a topos $\mc E$, an object $X$ is \emph{internally projective} if the right adjoint $\prod_X : \mc E/X \to \mc E$ of the pullback preserves epis, or equivalently the exponential $(-)^X : \mc E \to \mc E$ preserves epis. 
\end{definition}

In fact, internal projectivity can also be stated in an extension of the Kripke-Joyal interpretation of the internal language of $\mc K$, called the \emph{stack semantics}, where now one can quantify over \emph{all} objects in a topos; see~\cite{shulman2010stack}. The statement that ``every epimorphism over $X$ splits'', interpreted via the stack semantics, is \emph{equivalent} to the formulation of $X$ being internally projective in~\cref{def:internalproj} (cf.~\cite[D4.5.3]{johnstone2002sketches}).

Hence, we can state projectivity in the internal language of $\mc K$, and the following shows that internally in $\mc K$, all propositions are projective:

\begin{proposition}\label{propositionsareprojective}
    The following holds in the internal logic of $\mc K$,
    \[ \mc K \models \fa p\Omega p \text{ is projective.} \]
    Explicitly, this means for any $P\in\Kp$ and subobject $U\in\sub_{\mc K}(P) \cong \rg P$, the pushforward $\prod_\iota : \mc K/U \to \mc K/P$ preserves epis for the inclusion $\iota : U \hook P$. 
\end{proposition}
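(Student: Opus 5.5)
The plan is to first reduce the internal statement to the explicit external one, and then verify the external one by a concrete construction in $\Kp$.

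\emph{Reduction.} In the stack semantics (or Kripke--Joyal), $\fa p\Omega \varphi(p)$ holds iff for every object $P$ of a generating family (here representables $P\in\Kp$, or rooted ones by~\cref{comparisonK}) and every generalised element $p\in\Omega(P)\cong\rg P$ (\cref{subobject}), $\varphi(p)$ holds in $\mc K/P$. The proposition corresponding to $p$ is the subobject $\iota:U\hook P$, and ``$p$ is projective'' interpreted in $\mc K/P$ is exactly internal projectivity of $U\to P$ as an object of $\mc K/P$. By~\cref{def:internalproj}, together with~\cite[D4.5.3]{johnstone2002sketches}, this means that $\prod_\iota:\mc K/U\to\mc K/P$ preserves epis. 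Stability under further pullback is automatic, since pulling back along $f:Q\to P$ gives the same situation for the upset $f\inv U\subseteq Q$. A key observation is that the subobject of $\yon P$ classified by an upset $U$ is itself representable: it is the open inclusion $U\hook P$ in $\Kp$, because a map $Q\to P$ lands in the sieve iff it factors through $U$.

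\emph{Main step.} Let $e:A\surj B$ be an epi in $\mc K/U$. To show $\prod_\iota A\to\prod_\iota B$ is epi, it suffices to check local surjectivity on generalised elements. Take a rooted $Q$, a map $f:Q\to P$, and a section of $\prod_\iota B$ at $f$. Such a section amounts to a map $s:V\to B$ over $U$, where $V=f\inv U\subseteq Q$ is an upset and hence representable. If $V=\emptyset$, the section lifts trivially. Otherwise, since $e$ is epi, there is a $J$-cover of $V$ on which $s$ lifts. Taking the (finite) coproduct of the covering pieces, we obtain a single open surjection $g:R\surj V$ and a lift $t:R\to A$ with $et=sg$.

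The obstacle is that this cover lives over $V$, not over $Q$, so it must be \emph{extended} to a cover of $Q$ whose restriction to $V$ is $R$. I would do this with an explicit gluing. Let
\[ Q' = (Q\setminus V)\sqcup R, \]
where each part carries its own order, and additionally $q\le r$ iff $q\le g(r)$ in $Q$ for $q\in Q\setminus V$ and $r\in R$. Elements of $R$ are never below elements of $Q\setminus V$, which is consistent because $V$ is an upset. Define $h:Q'\to Q$ as the identity on $Q\setminus V$ and as $g$ on $R$.

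One then checks the following:
\begin{itemize}
\item The relation on $Q'$ is a partial order; transitivity uses monotonicity of $g$.
\item $h$ is monotone and surjective.
\item $h$ is open. From $r\in R$ this is openness of $g$. From $q\in Q\setminus V$ with $q\le q'$, either $q'\notin V$, or $q'=g(r)$ for some $r$ by surjectivity of $g$, and then $q\le r$ in $Q'$.
\item $h\inv V=R$.
\end{itemize}
Hence $\{h\}$ covers $Q$. Restricting the section along $h$ gives the map $sg:R\to B$, which lifts to $t:R\to A$. This $t$ is a section of $\prod_\iota A$ at $fh$ mapping to the restricted section. Therefore $\prod_\iota e$ is locally surjective, hence epi.

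I expect the gluing construction of $Q'$ to be the step requiring care, especially the verification that $h$ is open. The reduction through the stack semantics is routine given the cited equivalence.
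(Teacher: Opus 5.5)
Your proposal is correct and follows essentially the same route as the paper. You reduce a section of $\prod_\iota B$ at $f : Q\to P$ to a map from the upset $V=f^{-1}U$ into $B$, lift it along an open surjection $g : R\surj V$, and glue $R$ with $Q\setminus V$ (ordering $q\le r$ iff $q\le g(r)$) to get an open surjection onto $Q$ whose preimage of $V$ is $R$. Your write-up is slightly more careful than the paper's in two places: you combine the covering pieces into a single surjection via a finite coproduct, and you use surjectivity of $g$ to check openness of $h$ at points of $Q\setminus V$.
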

\begin{proof}
    Let $f : X \surj Y$ be an epi over $U$. To show $\prod_\iota f : \prod_\iota X \to \prod_\iota Y$ is an epi, we need to show that for any $Q \to P$ in $\Kp$ and any $Q \to \prod_\iota Y$, there exists some local section over $P$ as shown below,
    \[
    \begin{tikzcd}
        R \ar[d, two heads, dashed] \ar[r, dashed] & \prod_\iota X \ar[d, "\prod_\iota f"] \\
        Q \ar[r] & \prod_\iota Y
    \end{tikzcd}
    \]
    By the universal property, a map $Q \to \prod_\iota Y$ over $P$ is equivalently a map $Q|_U \to Y$ over $U$, where $Q|_U \cong Q \times_P U$ is the pullback, which is simply the preimage of $U$ in $Q$. By the fact that $X \surj Y$ is an epi, we do get a local section below,
    \[
    \begin{tikzcd}
        R' \ar[d, "q'"', two heads] \ar[r] & X \ar[d, two heads, "f"] \\
        Q|_U \ar[r] & Y
    \end{tikzcd}
    \]
    where $R'\in\Kp$ is some finite poset with an open surjection onto $Q|_U$. This way, we can construct $R$ as follows: as a set let $R \coloneq R' \coprod Q\backslash Q|_U$, where the orders within $R'$ and $Q\backslash Q|_U$ are inherited from the respective posets. Between them, we specify that for any $r\in R'$ and $x\in Q\backslash Q|_U$, 
    \[ x \le r \eff x \le q'r, \]
    and no other relations hold. It is easy to verify that $R$ is a well-defined finite poset, with a canonical surjection $q : R \surj Q$, where $qr = q'r$ for $r\in R'$ and $qx = x$ for $x\in Q\backslash Q|_U$. Furthermore, $q$ by construction is open because $q'$ is open. The final result of this construction is that we now have the following pullback squares
    \[
    \begin{tikzcd}
        R' \ar[dr, pullback] \ar[d, "q'"', two heads] \ar[r, hook] & R \ar[d, "q", two heads] \\
        Q|_U \ar[dr, pullback] \ar[r, hook] \ar[d] & Q \ar[d] \\
        U \ar[r, hook] & P
    \end{tikzcd}
    \]
    Thus by the universal property again, a section $R' \to X$ over $U$ now extends to a section $R \to \prod_\iota X$ over $P$. This implies that $\prod_\iota X \to \prod_\iota Y$ indeed admits local sections, thus is an epi. Hence, $U$ as a subterminal object is internally projective in $\mc K/P$.
\end{proof}

On the other hand, there is also a dual notion of \emph{internal injectivity}; similarly, the formulation in~\cref{def:internalinj} coincides with stating injectivity in the stack semantics (cf.~\cite{blechschmidt2018flabby}):

\begin{definition}[Internal injectivity]\label{def:internalinj}
    In a topos $\mc E$, an object $X$ is internally injective if $X^- : \mc E\op \to \mc E$ takes monos to epis.
\end{definition}

But in the case of injectivity, we can pass freely from external injectivity to internal injectivity (also cf.~\cite[Thm. 3.8]{blechschmidt2018flabby}). In particular, it is well-known that $\Omega$ in any topos is externally injective (cf.~\cite[A2.2.6]{johnstone2002sketches}), thus it is also internally injective in $\mc K$.

\begin{proposition}
    Any externally injective object $X$ in a topos is also \emph{internally injective}.
\end{proposition}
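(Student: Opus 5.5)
To show $X^{(-)}$ takes monos to epis, I would fix a mono $m : A \inj B$ in $\mc E$ and prove that the restriction map $X^m : X^B \to X^A$ is an epimorphism. In a topos, a map $g : E \to F$ is epi iff for every generalised element $a : T \to F$ there is an epi $e : T' \surj T$ and a lift $T' \to E$ of $ae$. Here the lift can be taken globally, with $e = \mathrm{id}$, so $X^m$ will in fact be split epi on generalised elements.

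Let $a : T \to X^A$ be a generalised element. Its transpose is a map $\tilde a : T \times A \to X$. Since products with $T$ preserve monos, $T \times m : T \times A \inj T \times B$ is again a mono. External injectivity of $X$ therefore gives an extension $\tilde b : T \times B \to X$ with $\tilde b \circ (T\times m) = \tilde a$. Let $b : T \to X^B$ be the transpose of $\tilde b$.

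The remaining check is that $X^m \circ b = a$. This follows from naturality of the exponential transpose: the transpose of $X^m \circ b$ is $\tilde b \circ (T \times m)$, which equals $\tilde a$. Hence every generalised element of $X^A$ lifts along $X^m$. Taking $T = X^A$ and $a = \mathrm{id}$, this yields a section of $X^m$, so $X^m$ is split epi and in particular epi.

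I don't expect a real obstacle. The only subtle point is that external injectivity is needed not just for $m$ itself but for the monos $T \times m$. These are again monos in $\mc E$, and injectivity is stated for all monos, so this is covered. If the paper's notion of injectivity were only relative to a class of monos, one would need closure of that class under $T\times -$.
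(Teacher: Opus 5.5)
Your proposal is correct and follows essentially the same route as the paper. Both transpose a generalised element $C \to X^A$ to a map $A \times C \to X$, extend it along the mono $A \times C \hook B \times C$ using external injectivity, and transpose back. Applying this to the identity on $X^A$ then shows that $X^B \to X^A$ is split epi.
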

\begin{proof}
    Let $A \hook B$ be a mono. We show that for any object $C$ and map $C \to X^A$, there exists a lift as indicated below on the left,
    \[\begin{tikzcd}
    	& {X^B} & {A \times C} & X \\
    	C & {X^A} & {B \times C}
    	\arrow[from=1-2, to=2-2]
    	\arrow[from=1-3, to=1-4]
    	\arrow[hook, from=1-3, to=2-3]
    	\arrow[dashed, from=2-1, to=1-2]
    	\arrow[from=2-1, to=2-2]
    	\arrow[dashed, from=2-3, to=1-4]
    \end{tikzcd}\]
    Given such a solid diagram, by transposing along the exponentials, we obtain the solid diagram on the right above. By injectivity of $X$ there exists an extension as the dashed arrow on the right above, thus it transposes to a lift indicated by the dashed arrow on the left above. This implies that $X^B \to X^A$ is in fact a \emph{split epi}, by considering the lift of the identity map on $X^A$.
\end{proof}

\begin{theorem}\label{internalinjFrob}
    For any object $X$ in $\mc K$, the following holds in the internal logic of $\mc K$,
    \begin{align*}
        \mc K \models &\,\fa{\varphi}{\mc P(X)} \fa{\psi}{\mc P(X \times \Omega)} \fa x{X} \\
        &\,(\varphi(x) \to \ex p{\Omega} \psi(x,p)) \to \ex p{\Omega}(\varphi(x) \to \psi(x,p))
    \end{align*}
\end{theorem}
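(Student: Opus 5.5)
We argue entirely in the internal language of $\mc K$, using only constructive reasoning together with the two internal facts just established: every proposition is internally projective (\cref{propositionsareprojective}), and $\Omega$ is internally injective (the preceding proposition applied to $\Omega$, which is externally injective). Fix $\varphi$, $\psi$, $x$, and assume $\varphi(x) \to \ex p\Omega \psi(x,p)$. Write $q$ for the proposition $\varphi(x)$, viewed as a subterminal object $\set{\ast \mid \varphi(x)}$. The hypothesis says that over $q$ the predicate $\ex p\Omega\psi(x,p)$ holds. Equivalently, the map $\set{p\in\Omega \mid \psi(x,p)} \to 1$ restricted to $q$, i.e.\ the projection $E \coloneq \set{(\ast,p) \in q\times\Omega \mid \psi(x,p)} \to q$, is an epimorphism onto $q$.

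The first step uses projectivity of $q$. Since $q$ is internally projective, every epi onto $q$ splits. Concretely, $\prod_q$ applied to $E \surj q$ gives an epi onto $\prod_q q \cong 1$, which is to say there exists a section. So we obtain, constructively, $\ex s{\Omega^q} \fa{u}{q} \psi(x,s(u))$: a partial element of $\Omega$ defined on the truth of $\varphi(x)$.

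The second step uses injectivity of $\Omega$. The mono $q \hook 1$ induces $\Omega^1 \to \Omega^q$, and by internal injectivity this is an epi. Hence there exists $p\in\Omega$ whose restriction to $q$ equals $s$, i.e.\ $\varphi(x) \to p = s(\ast)$. This $p$ then witnesses $\varphi(x) \to \psi(x,p)$: assuming $\varphi(x)$, we have $u\in q$ and $p = s(u)$, so $\psi(x,p)$ holds. This gives $\ex p\Omega(\varphi(x)\to\psi(x,p))$. Actually, for $\Omega$ the extension can even be written explicitly, namely $p \coloneq \ex{u}{q} s(u)$, since $\Omega$ is the object of propositions. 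So the injectivity step is essentially trivial, though invoking the proposition keeps the argument uniform.

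The main obstacle is the first step: making precise that ``every proposition is projective'' in the stack-semantics sense yields the internal splitting used above. In particular, \cref{propositionsareprojective} is stated for representable $P$ and $U\in\rg P$, whereas the internal quantifier over $\varphi$ and $x$ ranges over generalised elements with arbitrary stage $Y \to \mc P(X)\times X$. I would handle this by Kripke--Joyal: it suffices to check the statement at representable stages $P\in\Kp$, where $\varphi(x)$ becomes an upset $U\in\rg P$. There \cref{propositionsareprojective} applies directly to the epi $E \surj U$ over $U$. After applying $\prod_\iota$, local surjectivity of $\prod_\iota E \to \prod_\iota U \cong P$ gives a covering of $P$ on which the section exists, and this is exactly the forcing condition for the existential.
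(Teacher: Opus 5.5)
Your proposal is correct and is essentially the paper's proof: projectivity of the proposition $\varphi(x)$ splits the epi onto $\varphi(x)$, giving a partial element of $\scomp{p\in\Omega}{\psi(x,p)}$ defined on $\varphi(x)$, and injectivity of $\Omega$ extends it to a global $p$ witnessing $\varphi(x)\to\psi(x,p)$. Your explicit extension $\ex{u}{q} s(u)$ and your Kripke--Joyal account of how the projectivity result applies at representable stages are correct refinements of this same argument, not a different route.
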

\begin{proof}
    We reason constructively in the internal logic of $\mc K$. Suppose we have $\varphi\in\mc P(X)$ and $\psi \in \mc P(X\times\Omega)$, and assume for some $x\in X$, $\varphi(x) \to \ex p{\Omega}\psi(x,p)$ holds. By~\cref{propositionsareprojective} $\varphi(x)$ is projective, thus this lifts to a map $p : \varphi(x) \to \scomp{p\in\Omega}{\psi(x,p)}$, i.e.\ we have the solid part of the following diagram,
    \[
    \begin{tikzcd}
        \varphi(x) \ar[d, hook] \ar[r, "p"] & \scomp{p\in\Omega}{\psi(x,p)} \ar[d, hook] \\
        1 \ar[r, dashed, "q"'] & \Omega
    \end{tikzcd}
    \]
    By injectivity of $\Omega$, we can find an extension $q : 1 \to \Omega$ of $p$. In this case, $\varphi(x) \to \psi(x,q)$ because by construction $\varphi(x) \to p = q$. Thus, $\ex p{\Omega}(\varphi(x) \to \psi(x,p))$ holds.
\end{proof}

\begin{corollary}\label{dualfrobenius}
    For any profinite Heyting algebra $A$, the left adjoint $\Exists : A[\pf{\ms x}] \to A$ of the canonical inclusion $\iota : A \inj A[\pf{\ms x}]$ satisfies the dual Frobenius property: for any $\varphi\in A$ and any $\psi \in A[\pf{\ms x}]$,
    \[ \Exists(\iota\varphi \to \psi) = \varphi \to \Exists\psi. \]
    In particular, the dual Frobenius property holds for the left uniform interpolation quantifier of finitely presented Heyting algebras.
\end{corollary}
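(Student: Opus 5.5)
The plan is to externalise \cref{internalinjFrob} with $X = \mc JA$ and combine it with \cref{subobjadjoint}, under which $A \cong \sub_{\mc K}(\mc JA)$, $A[\pf{\ms x}] \cong \sub_{\mc K}(\Omega\times\mc JA)$, $\iota$ corresponds to $\pi\inv$, and $\Exists$ corresponds to $\exists_\pi$. The claim then reads $\exists_\pi(\pi\inv\varphi \to \psi) = \varphi \to \exists_\pi\psi$ in $\sub_{\mc K}(\mc JA)$, with $\to$ the Heyting implication of subobjects. This is legitimate because the identifications of \cref{leftadjointofnerveIKp} are Heyting algebra isomorphisms: they are induced by the nerve, and $\mc K(-,\Omega)\cong\sub_{\mc K}(-)$ carries the Heyting structure of $\Omega$.

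First I prove the inequality $\le$, which holds in any Heyting algebra with such an adjoint pair. By adjunction, $\exists_\pi(\pi\inv\varphi\to\psi)\le \varphi\to\exists_\pi\psi$ is equivalent to $\varphi\wedge\exists_\pi(\pi\inv\varphi\to\psi)\le\exists_\pi\psi$. Using ordinary Frobenius for $\exists_\pi$ (valid in any topos, since existential quantification along a projection is stable under pullback), the left side equals $\exists_\pi(\pi\inv\varphi\wedge(\pi\inv\varphi\to\psi))\le\exists_\pi\psi$. Alternatively, the same inequality is a one-line constructive argument in the internal language.

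The reverse inequality $\ge$ is the substantive direction, and it is exactly what \cref{internalinjFrob} provides. Interpret $\varphi$ as a global element of $\mc P(X)$, i.e.\ a subobject of $X=\mc JA$, and $\psi$ as a subobject of $X\times\Omega$. The Kripke--Joyal reading of the internal formula at stage $1$, universally quantified over $x\in X$, gives the subobject inclusion
\[ \scomp{x}{\varphi(x)\to\ex p\Omega\psi(x,p)} \le \scomp{x}{\ex p\Omega(\varphi(x)\to\psi(x,p))} \]
in $\sub_{\mc K}(X)$. The left side is $\varphi\to\exists_\pi\psi$ and the right side is $\exists_\pi(\pi\inv\varphi\to\psi)$, since internal connectives and quantifiers are interpreted by the external Heyting operations and $\exists_\pi$ on subobjects. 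The only care needed is matching the order of factors, $\Omega\times\mc JA$ versus $X\times\Omega$, which is a harmless swap isomorphism.

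For the final claim about finitely presented Heyting algebras, I use the remark after \cref{sheafrepasprocompletion}. For $A$ finitely presented, the profinite completion $A\inj\pf A$ is an embedding, and by \cite{ghilardi1995sheaf} the uniform interpolant $\Exists$ on $A[\ms x]$ is the restriction of the profinite $\Exists$ along the embeddings $A\inj\pf A$ and $A[\ms x]\inj\pf A[\pf{\ms x}]$. These embeddings are Heyting homomorphisms commuting with $\iota$, so the identity restricts. I expect the main obstacle to be this bookkeeping of identifications, namely that $\pf{A[\ms x]}\cong\pf A[\pf{\ms x}]$ (both being left adjoints of the same coproduct) and that $\Exists$ restricts, rather than any conceptual step.
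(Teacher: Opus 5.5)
Your proposal is correct and follows the paper's proof. Like the paper, you externalise \cref{internalinjFrob} at $X=\mc JA$ via \cref{subobjadjoint}, then restrict along the profinite completion embeddings using $\pf{A[\ms x]}\cong\pf A[\pf{\ms x}]$. You also make explicit a point the paper leaves implicit: the internal statement only supplies the inequality $\ge$, while $\le$ holds for any left adjoint of a Heyting homomorphism.
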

\begin{proof}
    The claim on profinite Heyting algebras follows straightforwardly from the observation in~\cref{subobjadjoint}, and the internal property in~\cref{internalinjFrob}: the dual Frobenius property is exactly the externalisation of the internal statement in~\cref{internalinjFrob} when $X$ is $\mc JA$. For the claim on finitely presented Heyting algebras, recall that the uniform interpolation is the restriction of the corresponding adjoints between their profinite completion. In particular, for any finitely presented Heyting algebra $A$, we have
    \[\begin{tikzcd}
    	A & {A[\ms x]} \\
    	{\pf A} & {\pf{A[\ms x]} \cong \pf{A}[\pf{\ms x}]}
    	\arrow[""{name=0, anchor=center, inner sep=0}, "\iota"{description}, tail, from=1-1, to=1-2]
    	\arrow[tail, from=1-1, to=2-1]
    	\arrow[""{name=1, anchor=center, inner sep=0}, "\Exists"{description}, curve={height=15pt}, from=1-2, to=1-1]
    	\arrow[tail, from=1-2, to=2-2]
    	\arrow[""{name=2, anchor=center, inner sep=0}, "{\pf{\iota}}"', tail, from=2-1, to=2-2]
    	\arrow[""{name=3, anchor=center, inner sep=0}, "\Exists"{description}, curve={height=15pt}, from=2-2, to=2-1]
    	\arrow["\dashv"{anchor=center, rotate=-90}, draw=none, from=1, to=0]
    	\arrow["\dashv"{anchor=center, rotate=-90}, draw=none, from=3, to=2]
    \end{tikzcd}\]
    The isomorphism at the bottom right holds since $\pf-$ as a left adjoint preserves coproducts,
    \[ \pf{A[\ms x]} \cong \pf{H[\ms x] \oplus A} \cong H[\ms x] \pf{\oplus} \pf A \cong \pf A[\pf{\ms x}]. \]
    Hence the dual Frobenius property also holds for the uniform interpolation quantifiers.
\end{proof}

\subsection{The subobject classifier is internally irreducible}\label{irreduciblesubobj}

Next, we will establish that the relevant right adjoints of maps between profinite Heyting algebras in fact preserve finite joins. As we will see, this is a consequence of $\Omega$ being \emph{internally irreducible} in $\mc K$. However, unlike the case of injectivity which holds for $\Omega$ in a general topos, this is a property special to $\mc K$. To show this we first need some preliminary results on Heyting algebras.

\begin{definition}[Local Heyting algebra]
    A Heyting algebra $A$ is \emph{local} if it is non-trivial and for any $a,b\in A$, $a \vee b = 1$ iff $a = 1$ or $b = 1$.
\end{definition}

\begin{proposition}\label{finitelocalpolylocal}
    Let $A$ be a Heyting algebra. If $A$ is local, then so is $A[\ms x]$.
\end{proposition}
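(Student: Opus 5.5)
The plan is to give a direct algebraic proof by a ``gluing'' construction: a Heyting algebra with a freshly adjoined top element is local, and the universal property of $A[\ms x]$ will let us map into such an algebra. Recall that $A[\ms x] \cong H[\ms x]\oplus A$, so a Heyting algebra morphism out of $A[\ms x]$ is the same as a morphism out of $A$ together with a choice of image for $\ms x$. Non-triviality of $A[\ms x]$ is immediate: the morphism $A[\ms x]\to A$ that is the identity on $A$ and sends $\ms x\mapsto 0$ is a retraction of the inclusion, and $A$ is non-trivial.

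For locality, suppose $a\vee b = 1$ in $A[\ms x]$ but $a\neq 1\neq b$. Let $f_a : A[\ms x]\surj C_a$ be the quotient by the principal filter ${\uparrow}a$, so that $f_a(a)\neq 1$; define $f_b : A[\ms x]\surj C_b$ similarly. Let $e : A[\ms x]\to A$ be the retraction above, and set $E = C_a\times C_b\times A$. Let $D = E\oplus 1$ be $E$ with a new top $\top$ adjoined. I will check three routine facts about $D$:
\begin{itemize}
\item $D$ is a Heyting algebra. For $u,v\in E$, put $u\to_D v = \top$ if $u\le v$ and $u\to_D v = u\to_E v$ otherwise.
\item $D$ is local, since $\top$ is join-irreducible.
\item The collapse map $c : D\to E$ with $\top\mapsto 1$, which is the identity on $E$, is a Heyting morphism.
\end{itemize}

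Next, define $g_0 : A\to D$ by $g_0(1)=\top$ and $g_0(t) = (f_a t, f_b t, t)$ for $t\neq 1$. Meets and $0$ are clearly preserved. Joins are preserved precisely because $A$ is local: $s\vee t = 1$ forces $s=1$ or $t=1$. Implication is the delicate case, and it is the reason for the third component. If $s\le t$, both sides equal $\top$. If $s\not\le t$, then $s\to t\neq 1$, and the third coordinate witnesses $g_0 s\not\le g_0 t$ in $E$. Hence $g_0 s\to_D g_0 t$ is the componentwise implication, which is $g_0(s\to t)$ because $f_a$, $f_b$ and the identity are Heyting morphisms. Without the faithful $A$-coordinate, this step fails whenever $f_a,f_b$ collapse the order, so I expect getting $g_0$ right to be the main obstacle.

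Finally, extend $g_0$ to $g : A[\ms x]\to D$ by sending $\ms x\mapsto (f_a\ms x, f_b\ms x, 0)$. The morphisms $c\circ g$ and $(f_a,f_b,e)$ from $A[\ms x]$ to $E$ agree on $A$ and on $\ms x$, so they are equal. Now $g(a)\vee g(b) = g(1)=\top$, and locality of $D$ gives, say, $g(a)=\top$. Applying $c$ and projecting to the first factor yields $f_a(a)=1$, a contradiction. Hence $A[\ms x]$ is local.
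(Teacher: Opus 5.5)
Your strategy works, but one step is wrong as written. Quotienting $A[\ms x]$ by the principal filter ${\uparrow}a$ gives the congruence whose class of $1$ is exactly ${\uparrow}a$. So it forces $f_a(a)=1$, the opposite of what you claim, and with that choice your last line yields no contradiction.

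The repair is immediate. The only properties of $f_a,f_b$ you ever use are that they are Heyting morphisms with $f_a(a)\neq 1$ and $f_b(b)\neq 1$. So take $f_a=f_b=\mathrm{id}_{A[\ms x]}$, or quotient by a filter maximal among those not containing $a$.

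With the identity, the extra $A$-coordinate also becomes unnecessary, because $\iota : A\inj A[\ms x]$ is already an order embedding (it has the retraction $e$). You may then take $E=A[\ms x]$, $g_0(t)=\iota t$ for $t\neq 1$, $g_0(1)=\top$, and $g(\ms x)=\ms x$. Then $c\circ g=\mathrm{id}$, so $g$ fixes every $\varphi\neq 1$ and sends $1$ to $\top$, and join preservation by $g$ is literally locality of $A[\ms x]$. You should also record $\top\to_D v=v$ and $u\to_D\top=\top$, which your case $s=1$ silently uses.

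With this fix the proof is correct and takes a genuinely different route from the paper's. The paper glues along the right adjoint $\Forall : A[\ms x]\to A$, forming the algebra of pairs $(\varphi,a)$ with $a\le\Forall\varphi$. It obtains a section $s$ from $\ms x\mapsto(\ms x,0)$, uses locality of $A$ only at the end on $s_0\varphi\vee s_0\psi=1$, and concludes via $s_0\varphi\le\Forall\varphi$.

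Your $D=E\oplus 1$ is also an Artin glueing, but along the meet-preserving map $E\to 2$ that detects the top element, which exists for every Heyting algebra. You use locality of $A$ instead to make $A\to D$ preserve joins.

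What your route buys is independence from uniform interpolation. The paper needs $\Forall$, which for infinite $A$ it takes from Pitts; its remark only removes this dependence for finite $A$. Your argument works for every local $A$, and verbatim for any set of indeterminates. It is the classical ``adjoin a new top'' proof of the disjunction property of IPC, relativised over $A$. The paper's version, in exchange, fits its theme of glueing along the uniform interpolant, but uses more machinery than the statement requires.
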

\begin{proof}
    By uniform interpolation for IPC, the inclusion $A \inj A[\ms x]$ has a \emph{right adjoint} $\ms A : A[\ms x] \to A$ (cf.~\cite[Cor. 15]{pitts1992uniform}). Since $A \inj A[\ms x]$ is an inclusion, $\Forall$ will be a retract, i.e.\ $\Forall a = a$ for $a\in A$. We then identify $A$ as a subalgebra of $A[\ms x]$. We apply a glueing argument along this right adjoint, which is the following comma square,
    \[\begin{tikzcd}
    	{\qsi{A[\ms x]}} & A \\
    	{A[\ms x]} & A
    	\arrow[""{name=0, anchor=center, inner sep=0}, "t", from=1-1, to=1-2]
    	\arrow[""{name=1, anchor=center, inner sep=0}, "r", from=1-1, to=2-1]
    	\arrow[equals, from=1-2, to=2-2]
    	\arrow["s", curve={height=-12pt}, dashed, from=2-1, to=1-1]
    	\arrow["\Forall"', from=2-1, to=2-2]
    	\arrow[curve={height=-6pt}, between={0.2}{0.8}, Rightarrow, from=0, to=1]
    \end{tikzcd}\]
    Since $\Forall$ is a right adjoint, it preserves all finite meets. Thus the comma object is the lattice-theoretic construction of Artin glueing (cf.~\cite{WRAITH1974345}). This way, $\qsi{A[\ms x]}$ is again a Heyting algebra and $r$ is a Heyting algebra morphism. Elements in $\qsi{A[\ms x]}$ are simply pairs $(\varphi,a)$ such that $a \le \Forall\varphi$. The maps $r,t$ are simply projections. The meets and joins in $\qsi{A[\ms x]}$ are pointwise, while the implication is given by
    \[ (\varphi,a) \to (\psi,b) = (\varphi \to \psi,(a\to b)\wedge\Forall(\varphi \to \psi)). \]
    This way, the map $a \mapsto (a,a)$ is a Heyting algebra morphism $A \to \qsi{A[\ms x]}$, thus by the universal property of $A[\ms x]$, the element $(\ms x,0)\in\qsi{A[\ms x]}$ induces a section $s$ of $r$. Since this is a section, for any $\varphi\in A[\ms x]$ we also write $s\varphi = (\varphi,s_0\varphi)$.

    Now we proceed to show $A[\ms x]$ is local. Evidently $0 \neq 1$ in $A[\ms x]$ if $A$ is non-trivial. Suppose $\varphi \vee \psi = 1$. This way,
    \[ s_0(\varphi \vee \psi) = s_0\varphi \vee s_0\psi = 1. \]
    But since $A$ is local, it follows that $s_0\varphi = 1$ or $s_0\psi = 1$. Without loss of generality say $s_0\varphi = 1$, then by construction we have 
    \[ s_0\varphi = 1 \le \Forall\varphi \nt \Forall\varphi = 1. \]
    In particular, $1 \le \varphi$ in $A[\ms x]$, thus $\varphi = 1$. 
\end{proof}

\begin{remark}
    The proof of~\cref{finitelocalpolylocal} clearly relies on the existence of the right adjoint $\Forall : A[\ms x] \to A$, which as we have mentioned is a consequence of uniform interpolation for IPC. However, if $A$ is finite, then the existence of this right adjoint is automatic, since it can be defined by a finite join. In particular, in~\cref{internalirredOmega} below we will use~\cref{finitelocalpolylocal} but only for finite Heyting algebras, and hence there is no actual dependency on the proof of uniform interpolation for IPC.
\end{remark}

\begin{theorem}\label{internalirredOmega}
    $\Omega$ in $\mc K$ is internally irreducible: $\Omega$ is inhabited, and
    \[ \mc K \models \fa{\varphi,\psi}{\mc P(\Omega)} \varphi \cup \psi = \Omega \to \varphi = \Omega \vee \psi = \Omega. \]
\end{theorem}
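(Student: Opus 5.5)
The plan is to unfold the internal statement by Kripke--Joyal semantics over the rooted site $(\Kpp,J)$ from \cref{comparisonK}, and then reduce it to a combinatorial fact about $\Omega \times P$ as an image finite poset.

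\textbf{Step 1: inhabitedness.} $\Omega$ is inhabited because it has a global point $\top : 1 \to \Omega$.

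\textbf{Step 2: the external statement at a stage.} Since covers of a rooted $P$ are generated by a single open surjection, and surjections already force any statement about restricted data back down, it suffices to prove the following. Let $P \in \Kpp$, and let $\varphi,\psi$ be subobjects of $P \times \Omega$, viewed as $P$-indexed families in $\mc P(\Omega)$. If $\varphi \cup \psi = P \times \Omega$, then $\varphi = P \times \Omega$ or $\psi = P \times \Omega$.

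This is enough for the internal statement. The stage may be any map $Q \to P$, but the hypothesis and conclusion are stable under reindexing, and the restriction to $Q$ is again of the same form. Also $\varphi \cup \psi = \Omega$ at stage $P$ means, by \cref{unionofsub}, exactly that the union of the subobjects is everything.

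\textbf{Step 3: passing to an image finite poset.} Because $\IKp$ is closed under limits in $\mc K$, the object $X := P \times \Omega$ lies in $\IKp$. By \cref{leftadjointofnerveIKp}, $\sub_{\mc K}(X) \cong \mc UX$ is the lattice of upsets of the image finite poset $X$. The claim is thus that $\mc UX$ is local.

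A simple order-theoretic reformulation is available. Local holds as soon as any two points $u,v \in X$ have a common lower bound $w$. Indeed, if $a \cup b = X$ with $u \notin a$ and $v \notin b$, then $w$ lies in $a$ or in $b$. Since these are upsets, this forces $u \in a$ or $v \in b$, a contradiction. So the whole proof reduces to producing lower bounds.

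\textbf{Step 4: producing lower bounds (the main obstacle).} Points of $X$ are represented by maps from finite rooted frames: for a point $u$, the upset $\dv u$ is finite and rooted, and the inclusion $\dv u \hook X$ is open. Such a map is a pair consisting of
\begin{itemize}
    \item an open map $Q \to P$, and
    \item an upset of $Q$, the valuation of $\ms x$, by \cref{subobject}.
\end{itemize}
Given $u,v$, represented by $(Q_1 \to P, V_1)$ and $(Q_2 \to P, V_2)$ with roots mapping to $u$ and $v$, I would build a new frame
\[ R := \set{\bot} \oplus (Q_1 \sqcup Q_2 \sqcup P), \]
that is, a fresh root placed below a disjoint union.
\begin{itemize}
    \item The map $R \to P$ sends $\bot$ to the root of $P$, is the given maps on $Q_1$ and $Q_2$, and is the identity on the copy of $P$.
    \item It is open: above $\bot$ every point of $P$ is hit via the $P$-copy, and elsewhere openness is inherited from the summands.
    \item The valuation is $V_1 \cup V_2 \cup \emptyset$, with $\bot$ not in it. This is an upset of $R$.
\end{itemize}
This gives a map $R \to X$, hence by openness a point $w$, the image of $\bot$, lying below $u$ and $v$.

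This is precisely the frame-theoretic form of the glueing section $s$ with $(\ms x,0)$ in \cref{finitelocalpolylocal}. The construction is where care is needed: checking that $R \to X$ is a genuine morphism in $\IKp$, and that the images of the roots of $Q_1, Q_2$ are $u,v$. Alternatively, I could use \cref{finitelocalpolylocal} for the finite local algebra $\mc UP$ (local because $P$ is rooted) together with the density of $\mc UP[\ms x]$ in its profinite completion $\mc UX$.
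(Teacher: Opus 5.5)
Your proof is correct. After the reduction you share with the paper (Kripke--Joyal over $(\Kpp,J)$, reducing to locality of $\sub_{\mc K}(P\times\Omega)\cong\mc U(P\times\Omega)$ for rooted $P$), it takes a different route.

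\textbf{The paper's route.} It stays algebraic. It identifies $\mc U(P\times\Omega)$ with $\pf{\mc UP[\ms x]}$ and proves $\mc UP[\ms x]$ local by Artin glueing along $\Forall$ (\cref{finitelocalpolylocal}). It then imports from \cite{RN939} that a join-irreducible $\varphi\in H[n]$ stays join-irreducible in $\pf{H[n]}$, which lets it pass to the completion.

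\textbf{Your route.} You observe that $\mc UX$ is local exactly when the image finite poset $X$ is downward directed. You then produce common lower bounds from a single generalised element $(R\to P,V)$ of $P\times\Omega$ on the fresh-rooted frame $R$. This never requires describing the product $P\times\Omega$ in $\IKp$ explicitly. You only use its generalised elements, which are pairs of an open map and an upset. Rootedness of $P$ is used exactly to define $R\to P$ on the new root.

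This is the direct poset-level argument that the paper's remark after the proof alludes to. It is self-contained and avoids the external citation. The paper's route, in exchange, yields the algebraic lemma \cref{finitelocalpolylocal} along the way.

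\textbf{Two small corrections.}
\begin{enumerate}
\item The inequality $w\le u,v$ does not come from openness. It follows from monotonicity of $h:R\to X$ together with $h\circ i_k=g_k$ for the open inclusions $i_k:Q_k\hookrightarrow R$. The latter holds because restricting $(R\to P,V)$ along $i_k$ gives back $(Q_k\to P,V_k)$.
\item Your closing ``alternative'' via density would not work as stated, because locality does not pass to profinite completions in general. For instance, adjoining a new top to $B\times B$ with $B$ infinite gives a local Heyting algebra whose profinite completion is $\pf B\times\pf B$. This is exactly why the paper needs the finitely presented result of \cite{RN939}.
\end{enumerate}
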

\begin{proof}
    $\Omega$ being inhabited is evident, since it has a global point. For irreducibility under binary union, by the Kripke-Joyal semantics (cf.~\cite{maclane2012sheaves}), it suffices to show for any $P\in\Kpp$ and any $\varphi,\psi\in\mc P(\Omega)(P)$, we have
    \[ \varphi \vee \psi = 1 \nt \varphi = 1 \vee \psi = 1. \]
    In other words, we need to show $\mc P(\Omega)(P)$ is a local Heyting algebra for any $P\in\Kpp$. Notice that we have the following isomorphisms,
    \[ \mc P(\Omega)(P) \cong \mc K(P \times \Omega,\Omega) \cong \mc U(P \times \Omega). \]
    By the duality $\mc U : \IKp \simeq \Pro(\HA\fn)\op$, we have
    \[ \mc U(P \times \Omega) \cong \mc UP \pf{\oplus} \mc U\Omega \cong \mc UP \pf{\oplus} H[\ms x] \cong \mc UP[\pf{\ms x}] \cong \pf{\mc UP[\ms x]}. \]
    Now notice that $\mc UP$ is local since $P\in\Kpp$ is rooted. $\mc UP$ is a finite Heyting algebra thus the right adjoint $\Forall : \mc UP[\ms x] \to \mc UP$ exists. Thus by~\cref{finitelocalpolylocal}, $\mc UP[\ms x]$ is also local. Since $\mc UP[\ms x]$ is finitely presented, there exists some $n$ and $\varphi\in H[n]$ that 
    \[ H[n] \surj H[n]/\varphi \cong \mc UP[\ms x]. \]
    Since $\mc UP[\ms x]$ is local, $\varphi$ must be join-irreducible in $H[n]$. By~\cite[Cor. 7.2]{RN939}, this implies $\varphi\in\pf{H[n]}$ is also join-irreducible. Observe that $\pf{\mc UP[\ms x]}$ is also the following quotient,
    \[ \pf{\mc UP[\ms x]} \cong \pf{H[n]/\varphi} \cong \pf{H[n]}/\varphi. \]
    This way, it follows that $\pf{\mc UP[\ms x]} \cong \mc P(\Omega)(P)$ is indeed local.
\end{proof}

\begin{remark}[An alternative way to show irreducibility]
    In the proof of~\cref{internalirredOmega} we cite a result in~\cite{RN939}, that essentially allows us to show for a finitely presented Heyting algebra $A$, if $A$ is local then so is its profinite completion $\pf A$. However, we can also show the result in~\cref{internalirredOmega} via a direct translation using Kripke-Joyal semantics to a statement about finite posets.
\end{remark}

\begin{corollary}\label{rightadjlocal}
    For any profinite Heyting algebra $A$, the right adjoint $\Forall : A[\pf{\ms x}] \to A$ to the canonical inclusion $A \inj A[\pf{\ms x}]$ preserves finite joins. In particular, the right uniform interpolant for finitely presented Heyting algebras commutes with finite joins.
\end{corollary}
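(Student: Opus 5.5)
The plan is to derive the external statement from the internal irreducibility of $\Omega$ (\cref{internalirredOmega}) by a constructive argument in the internal logic of $\mc K$, exactly parallel to how \cref{dualfrobenius} was obtained from \cref{internalinjFrob}. The nullary case is immediate: $\Forall 0 = 0$ internally says $\forall p\in\Omega.\,\bot$ implies $\bot$, which holds because $\Omega$ is inhabited by $\top$. For the binary case, the internal statement I will aim for is: for any $X\in\mc K$,
\[ \mc K \models \fa{\varphi,\psi}{\mc P(X\times\Omega)} \fa x X \bigl(\fa p\Omega (\varphi(x,p)\vee\psi(x,p))\bigr) \to \bigl(\fa p\Omega \varphi(x,p)\bigr)\vee\bigl(\fa p\Omega\psi(x,p)\bigr). \]
The reverse inequality $\Forall\varphi\vee\Forall\psi\le\Forall(\varphi\vee\psi)$ is automatic from monotonicity.

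To prove this internally, fix $x$ and set $\varphi_x\coloneq\scomp{p\in\Omega}{\varphi(x,p)}$ and $\psi_x\coloneq\scomp{p\in\Omega}{\psi(x,p)}$, both in $\mc P(\Omega)$. The hypothesis says precisely that $\varphi_x\cup\psi_x=\Omega$. Applying \cref{internalirredOmega}, which is stated universally over $\mc P(\Omega)$ and hence may be instantiated at these parametrised subsets, gives $\varphi_x=\Omega\vee\psi_x=\Omega$. That is exactly $\fa p\Omega\varphi(x,p)\vee\fa p\Omega\psi(x,p)$. This step is entirely constructive.

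Next comes externalisation. Take $X=\mc JA$. By \cref{subobjadjoint}, $A\cong\sub_{\mc K}(\mc JA)$ and $A[\pf{\ms x}]\cong\sub_{\mc K}(\Omega\times\mc JA)$, with $\Forall$ corresponding to $\forall_\pi$. Elements $\varphi,\psi\in A[\pf{\ms x}]$ are global sections of $\mc P(\mc JA\times\Omega)$. Interpreting the internal formula at stage $1$ with $x$ the generic element, via Kripke–Joyal, gives the inclusion of subobjects $\forall_\pi(\varphi\cup\psi)\le\forall_\pi\varphi\cup\forall_\pi\psi$ of $\mc JA$, since internal $\forall$ over $\Omega$ and internal $\vee$ are interpreted by $\forall_\pi$ and union. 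Hence $\Forall$ preserves binary joins, and by induction all finite joins.

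For finitely presented $A$, I reuse the square from the proof of \cref{dualfrobenius}, now with right adjoints. The inclusions $A\inj\pf A$ and $A[\ms x]\inj\pf A[\pf{\ms x}]$ are lattice embeddings by the finite model property, and the uniform interpolant $\Forall$ on $A[\ms x]$ is the restriction of the profinite $\Forall$. Joins in $A[\ms x]$ are preserved by the embedding, so join preservation descends. I expect this restriction claim to be the main point needing care. I will justify it by the paper's identification of Ghilardi's interpolants with the restricted adjoints: if $\Forall$ on $\pf{A[\ms x]}$ maps $A[\ms x]$ into $A$, then the restriction is right adjoint to $\iota$, since the order is reflected by the embedding.
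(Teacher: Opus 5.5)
Your proposal is correct and follows the paper's own proof: the same internal statement, obtained by instantiating \cref{internalirredOmega} at $\scomp{p\in\Omega}{\varphi(x,p)}$ and $\scomp{p\in\Omega}{\psi(x,p)}$, externalised via \cref{subobjadjoint} with $X=\mc JA$, and then restricted to finitely presented algebras as in \cref{dualfrobenius}, relying on the same cited result of Ghilardi. Your explicit treatment of the nullary join through the inhabitedness of $\Omega$ is a small addition that the paper leaves implicit.
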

\begin{proof}
    We first show for any object $X\in\mc K$, it holds internally in $\mc K$ that
    \begin{align*}
        \mc K \models &\,\fa{\varphi,\psi}{\mc P(X \times \Omega)}\fa xX \\
        &\,\fa p{\Omega}(\varphi(x,p)\vee\psi(x,p)) \to (\fa p\Omega \varphi(x,p) \vee \fa p\Omega \psi(x,p)).
    \end{align*}
    We reason constructively in the internal logic of $\mc K$. Suppose we have $\varphi,\psi \subseteq X \times \Omega$ and $x\in X$. Consider the following subsets,
    \[ U = \scomp{p:\Omega}{\varphi(x,p)}, \quad V = \scomp{p:\Omega}{\psi(x,p)}. \]
    Now by assumption $U \cup V = \Omega$. By~\cref{internalirredOmega}, $U = \Omega$ or $V = \Omega$, thus the desired claim holds. Again by~\cref{subobjadjoint}, the externalisation of this internal property for $X = \mc JA$ exactly shows $\Forall : A[\pf{\ms x}] \to A$ preserves finite joins. The claim on uniform interpolation follows from an argument similar to that in~\cref{dualfrobenius}.
\end{proof}

\section{Subtopoi of the K-topos}\label{sec:subtopoi}

In this section we digress a bit from our main investigation and classify all subtopoi of $\mc K$. Our strategy is to calculate explicitly those maps $j : \Omega \to \Omega$ in $\mc K$ that constitute Lawvere-Tierney topologies. As a start, we can easily classify all endomorphisms of $\Omega$:

\begin{lemma}\label{subisfree}
  $\mc K(\Omega,\Omega) \cong H[\ms x]$. 
\end{lemma}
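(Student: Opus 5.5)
The plan is to compute $\mc K(\Omega,\Omega)$ by identifying it with the subobject lattice of $\Omega$, and then to use the reflection onto profinite Heyting algebras. First, since $\Omega$ classifies subobjects, we have $\mc K(\Omega,\Omega)\cong\sub_{\mc K}(\Omega)$. Second, by \cref{leftadjointofnerveIKp}, the left adjoint of the inclusion $\Pro(\HA\fn)\op\hook\mc K$ is $\sub_{\mc K}(-)$. Third, $\Omega$ lies in this reflective subcategory: by \cref{subobjectasanerve}, $\Omega$ is the nerve of $H[\ms x]$, that is, $\Omega\cong N\mc JH[\ms x]$.

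The counit of a reflection is invertible on objects of the subcategory. Hence the reflection $\sub_{\mc K}(N\mc JH[\ms x])$ is isomorphic to $H[\ms x]$ as a profinite Heyting algebra, and this gives $\sub_{\mc K}(\Omega)\cong H[\ms x]$.

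As a cross-check we can use the concrete duality. The isomorphism $\sub_{\mc K}(NX)\cong\mc UX$ for $X\in\IKp$ follows from \cref{closedundersubobjects} together with the fact that subobjects of $X$ in $\IKp$ are exactly its upsets (as used in the proof of \cref{sheafrepasprocompletion}). Taking $X=\mc JH[\ms x]$, the Rieger--Nishimura ladder, we get $\mc U\mc JH[\ms x]\cong H[\ms x]$, since $H[\ms x]$ is profinite.

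Alternatively, one can argue by fullness. Because $\Omega\in\IKp$ and the inclusion is fully faithful,
\[ \mc K(\Omega,\Omega)\cong\IKp(\mc JH[\ms x],\mc JH[\ms x])\cong\Pro(\HA\fn)(H[\ms x],H[\ms x])\cong H[\ms x], \]
where the last step uses that $H[\ms x]$ is the free profinite Heyting algebra on one generator. It remains to check that this bijection matches the one through subobjects. An endomorphism of the free algebra is determined by where it sends the generator $\ms x$, and on the sheaf side $\ms x$ corresponds to the generic subobject $\top:1\to\Omega$. A map $j:\Omega\to\Omega$ therefore corresponds to the element $j^{-1}(\top)\in\sub(\Omega)$.

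The only step that needs care is the claim that $H[\ms x]$ is free in $\Pro(\HA\fn)$, that is, that $H[\ms x]\cong\pf{H[\ms x]}$. The paper asserts this in \cref{subobjectasanerve}, citing \cite{RN921}, so everything else is formal.
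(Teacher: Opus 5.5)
Your proposal is correct, and your main argument is the paper's proof: you identify $\mc K(\Omega,\Omega)$ with $\sub_{\mc K}(\Omega)$, use that $\sub_{\mc K}(-)$ is the reflection onto $\Pro(\HA\fn)\op$ (\cref{leftadjointofnerveIKp}), and use that $\Omega\cong\mc JH[\ms x]$ already lies in that subcategory (\cref{subobjectasanerve}). Your extra fullness argument via freeness of $H[\ms x]$ is also valid, and it makes explicit that composing maps $\Omega\to\Omega$ corresponds to substituting polynomials, which is what \cref{lttopology} later relies on.
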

\begin{proof}
  This follows from the fact that $\sub_{\mc K} \cong \mc K(-,\Omega) : \mc K \to \Pro(\HA\fn)\op$ is the left adjoint to the inclusion $\mc J : \Pro(\HA\fn)\op \hook\mc K$ by~\cref{leftadjointofnerveIKp}, and the result in~\cref{subobjectasanerve} that $\Omega \cong \mc JH[\ms x]$.
\end{proof}

Thus, our goal is to characterise those elements in $H[\ms x]$ that induce Lawvere-Tierney topologies via the isomorphism $\mc K(\Omega,\Omega) \cong H[\ms x]$. 

\begin{lemma}\label{lttopology}
  There is a bijective correspondence between subtopoi of $\mc K$ and polynomials $\varphi(\ms x) \in H[\ms x]$ satisfying the following axioms:
  \begin{enumerate}[(i)]
    \item $\varphi$ \emph{preserves top}: $\varphi(1) = 1$ in $H[\ms x]$.
    \item $\varphi$ \emph{preserves conjunction}: $\varphi(\ms x\wedge\ms y) = \varphi(\ms x) \wedge \varphi(\ms y)$ in $H[\ms x,\ms y]$.
    \item $\varphi$ is \emph{idempotent}: $\varphi(\varphi(\ms x)) = \varphi(\ms x)$ in $H[\ms x]$.
  \end{enumerate}
\end{lemma}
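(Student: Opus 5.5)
The plan is to combine the standard bijection between subtopoi of an elementary topos and Lawvere--Tierney topologies $j : \Omega \to \Omega$ (cf.~\cite[A4.4]{johnstone2002sketches}) with the identification $\mc K(\Omega,\Omega) \cong H[\ms x]$ of~\cref{subisfree}. Recall that $j$ is a Lawvere--Tierney topology iff $j\circ\top = \top$, $j\circ j = j$, and $j\circ\wedge = \wedge\circ(j\times j)$ as maps $\Omega\times\Omega\to\Omega$. The task is therefore to translate each of these three equations between maps in $\mc K$ into an equation in the appropriate free Heyting algebra.

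First I will make the isomorphism of~\cref{subisfree} explicit and functorial. By~\cref{leftadjointofnerveIKp}, $\mc K(-,\Omega) \cong \sub_{\mc K}(-)$ is left adjoint to the full inclusion $\mc J : \Pro(\HA\fn)\op \hook \mc K$, and by~\cref{subobjectasanerve} we have $\Omega \cong \mc JH[\ms x]$. Since the inclusion is full and $\mc J$ preserves limits, a map $\Omega^n \to \Omega$ is the same as a map $\mc J(H[\ms x]^{\pf\oplus n}) \to \mc JH[\ms x]$. This is in turn a profinite Heyting algebra morphism $H[\ms x] \to H[\ms x]\pf\oplus\cdots\pf\oplus H[\ms x] \cong \pf{H[n]}$, that is, an element of $\pf{H[n]}$, namely the image of the generator $\ms x$. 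The key points are then:
\begin{itemize}
\item composition of maps between powers of $\Omega$ corresponds to substitution of these elements, since composing such homomorphisms is exactly evaluation at the image of the generator;
\item $\top : 1 \to \Omega$ corresponds to the constant $1$, and $\wedge : \Omega\times\Omega\to\Omega$ to $\ms x\wedge\ms y$, because $\wedge$ classifies the intersection of the two projection subobjects.
\end{itemize}
With this dictionary, the condition $j\top = \top$ becomes $\varphi(1)=1$ in $H[0]=2$, which I would read in $H[\ms x]$ via the inclusion. Similarly $jj=j$ becomes $\varphi(\varphi(\ms x)) = \varphi(\ms x)$, and the conjunction axiom becomes condition (ii) in $\pf{H[2]}$.

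The main obstacle is that the naive translation lands in the profinite completions $\pf{H[2]}$ rather than in $H[\ms x,\ms y] = H[2]$. I need two facts here. The first is that $H[\ms x] \cong \pf{H[\ms x]}$, noted in~\cref{subobjectasanerve}, so that $j$ really is a polynomial. The second is that $H[2] \inj \pf{H[2]}$ is injective by the finite model property. The substitution $\varphi(\ms x\wedge\ms y)$ of a genuine polynomial into genuine polynomials stays inside $H[2]$, and the profinite extension of substitution agrees with ordinary substitution on $H[n]$. Hence the equation in $\pf{H[2]}$ holds iff it holds in $H[2]$. I would verify this compatibility by noting that $\pf-$ is a functor and sends the homomorphism $H[1]\to H[2]$, $\ms x\mapsto \ms x\wedge\ms y$, to its continuous extension. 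With these checks in place, the correspondence follows.
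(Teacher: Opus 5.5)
Your proposal is correct and follows essentially the same route as the paper. Both identify subtopoi with Lawvere--Tierney topologies, use \cref{subisfree} together with $\Omega\times\Omega\cong\mc J(\pf{H[\ms x,\ms y]})$ to dualise the three diagrams into profinite Heyting algebras, and handle the conjunction axiom by noting that the relevant maps into $\pf{H[\ms x,\ms y]}$ factor through $H[\ms x,\ms y]$, which embeds by the finite model property; you are in fact more explicit than the paper about that injectivity and about which diagram corresponds to which axiom.
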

\begin{proof}
  Recall a subtopos of $\mc K$ can be identified with a Lawvere-Tierney topology on $\mc K$, i.e. a map $\varphi : \um \to \um$ such that
  \[\begin{tikzcd}
    1 & \um & \um & \um & {\um \times \um} & {\um \times \um} \\
    & \um && \um & \um & \um
    \arrow["\top", from=1-1, to=1-2]
    \arrow["\top"', from=1-1, to=2-2]
    \arrow["\varphi", from=1-2, to=2-2]
    \arrow["\varphi", from=1-3, to=1-4]
    \arrow["\varphi"', from=1-3, to=2-4]
    \arrow["\varphi", from=1-4, to=2-4]
    \arrow["{\varphi \times \varphi}", from=1-5, to=1-6]
    \arrow["\wedge"', from=1-5, to=2-5]
    \arrow["\wedge", from=1-6, to=2-6]
    \arrow["\varphi"', from=2-5, to=2-6]
  \end{tikzcd}\]
  By~\cref{subisfree}, a map $\varphi : \um \to \um$ is equivalently a polynomial $\varphi\in H[\ms x]$. Note we know that $\um \cong \mc JH[\ms x]$ and $\um \times \um \cong \mc J(H[\ms x] \pf{\oplus} H[\ms y]) \cong \mc J(\pf{H[\ms x,\ms y]})$. The above diagrams commute iff the corresponding diagrams commute below,
  \[\begin{tikzcd}[column sep = 4ex]
    2 & {H[\ms x]} & {H[\ms x]} & {H[\ms x]} & {\widehat{H[\ms x,\ms y]}} && {\widehat{H[\ms x,\ms y]}} \\
    & {H[\ms x]} && {H[\ms x]} & {H[\ms x]} && {H[\ms x]}
    \arrow["\ms x \mapsto 1"', from=1-2, to=1-1]
    \arrow["{\ms x \mapsto \varphi}"', from=1-4, to=1-3]
    \arrow["{\ms x,\ms y \mapsto \varphi(\ms x),\varphi(\ms y)}"', from=1-7, to=1-5]
    \arrow["\ms x \mapsto 1", from=2-2, to=1-1]
    \arrow["{\ms x \mapsto \varphi}"{description}, from=2-2, to=1-2]
    \arrow["{\ms x \mapsto \varphi}", from=2-4, to=1-3]
    \arrow["{\ms x \mapsto \varphi}"{description}, from=2-4, to=1-4]
    \arrow["{\ms x \mapsto \ms x \wedge \ms y}"{description}, from=2-5, to=1-5]
    \arrow["{\ms x \mapsto \ms x \wedge \ms y}"{description}, from=2-7, to=1-7]
    \arrow["{\ms x \mapsto \varphi}", from=2-7, to=2-5]
  \end{tikzcd}\]
  The first two diagrams directly translate to (i) and (ii). For the correspondence between (iii) and the diagram to the right, it suffices to note that these maps going into the profinite completion $\widehat{H[\ms x,\ms y]}$ factor through $H[\ms x,\ms y]$. 
\end{proof}

We are then able to classify the subtopoi of $\mc K$ through such polynomials:

\begin{theorem}\label{thm:subtoposofK}
    The only non-trivial subtopos of $\mc K$ is the subtopos of $\neg\neg$-sheaves $\mc K_{\neg\neg}$, whose direct image by~\cref{nabladoublenegation} corresponds to $\nabla : \Set \to \mc K$ mentioned in~\cref{Kcohesive}.
\end{theorem}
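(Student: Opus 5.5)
The plan is to classify the Lawvere–Tierney topologies $j : \Omega \to \Omega$ by working pointwise over the site, rather than inside $H[\ms x]$. By \cref{subobject}, $j$ is a family of maps $j_P : \mc UP \to \mc UP$ indexed by $P \in \Kpp$. The axioms of \cref{lttopology} say that each $j_P$ is a nucleus on the finite Heyting algebra $\mc UP$. Naturality says that $f\inv j_P(U) = j_Q(f\inv U)$ for every open map $f : Q \to P$.

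First I recall the standard description of nuclei on a finite algebra $\mc UP$. They correspond bijectively to subsets $S_P \subseteq P$, the points that remain in the corresponding subspace, via
\[ j_P(U) = \scomp{p\in P}{\dv p \cap S_P \subseteq U}, \]
and $S_P$ can be recovered as the set of points $p$ with $p \notin j_P(P\setminus\dv p\,\cup\,\ldots)$. Concretely, $p \in S_P$ iff $p\notin j_P(\set{q \mid q\not\le p})$; the set $\set{q\mid q\not\le p}$ is an upset. Naturality along the open embeddings $\dv p \hook P$ from \cref{comparisonK} then shows that membership $p\in S_P$ depends only on the rooted frame $\dv p$ together with its root. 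So the whole topology is encoded by a class $\mc S$ of rooted finite posets, those whose root is kept. Naturality along open surjections between rooted frames, computed on the test upsets above, shows that $\mc S$ is closed under the passage $Q \mapsto P$ whenever there is an open surjection $Q\surj P$ of rooted frames, with the root of $Q$ mapping to the root of $P$.

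Next I exploit the product trick from the proof of \cref{sufficientcohesion}. For rooted $P,Q$, the poset $P\otimes Q$ is rooted and has open surjections onto both factors by \cref{monoidalpullback}. I then split into cases on whether the one-point frame $1$ lies in $\mc S$. If $1\notin\mc S$, I claim $\mc S=\emptyset$, since any rooted $P$ has an open surjection onto $1$. In that case $j$ is constantly $\top$ and the subtopos is degenerate. If $1\in\mc S$ but some non-singleton rooted $P$ is in $\mc S$, I aim to show that every rooted frame is in $\mc S$. This should give $S_P=P$ for all $P$, so $j=\mathrm{id}$. The remaining case is $\mc S=\set{1}$, where $S_P$ is exactly the set of maximal points. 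There $j_P(U)=\scomp{p}{\max(\dv p)\subseteq U}$, which is $\neg\neg U$, giving $\mc K_{\neg\neg}$. That subtopos is identified with $\nabla$ by \cref{nabladoublenegation}.

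The main obstacle is the middle case: showing that a single non-singleton rooted $P\in\mc S$ forces all rooted frames into $\mc S$. The upward closure along open surjections does not immediately give this, because the surjections run the wrong way for the argument. What I would try first is to use the two-element chain $\Sigma$. I want to show $\Sigma\in\mc S$ by an open surjection from $P$ onto $\Sigma$, or from a suitable $P\otimes\Sigma$, that respects roots and keeps the root in $S$. Once $\Sigma\in\mc S$, I would check directly on the test upsets that $j_\Sigma=\mathrm{id}$. From that I would derive $j(\ms x)=\ms x$ in $H[\ms x]$, using that the element $\ms x$ is determined by its behaviour on frames of this kind, with \cref{subisfree} and the Rieger–Nishimura description as a cross-check. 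If this combinatorial route stalls, the fallback is purely algebraic. I would use inflationarity $\ms x\le\varphi(\ms x)$ and meet preservation to confine $\varphi$ to the small upset of $\ms x$ in the Rieger–Nishimura lattice, and then test each candidate against axioms (i)–(iii).
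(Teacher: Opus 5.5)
Your reduction is sound and differs from the paper's route. Instead of classifying elements of $H[\ms x]$ via the Rieger--Nishimura lattice, you encode $j$ by the class $\mc S$ of rooted frames whose root survives. Your treatment of the cases $\mc S=\emptyset$ (degenerate topos) and $\mc S=\set{1}$ (where $j_P(U)=\neg\neg U$) is correct.

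The gap is the middle case. It carries the real content of the theorem, and you leave it open. Getting $\Sigma\in\mc S$ is immediate: a rooted non-singleton $P$ has an open surjection onto $\Sigma$, and you have already shown that $\mc S$ is closed under open surjective images. But the step from $j_\Sigma=\mathrm{id}$ to $j=\mathrm{id}$ rests only on the assertion that $\ms x$ is ``determined by its behaviour on frames of this kind''. The precise form you need is that $\ms x$ is the only element of $H[\ms x]$ acting as the identity on the three-element chain $\mc U\Sigma$. This is in fact true, but it is a non-trivial fact about the Rieger--Nishimura ladder: every point other than the top point forcing $\ms x$ and the point lying only below it sits below the other top point. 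You give no argument for it, and your fallback is simply the paper's proof.

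The missing idea is that naturality is an \emph{equation} $f\inv j_P(U)=j_Q(f\inv U)$. You only extracted one half of it, namely closure of $\mc S$ under open surjective images, which is why the surjections seemed to point the wrong way. The other half gives a lifting property. Suppose $f:Q\surj P$ is an open surjection of rooted frames, $P\in\mc S$, and $r_P$ is the root of $P$. Evaluating at $U=P\setminus\set{r_P}$ shows that some point of $f\inv(r_P)$ lies in $S_Q$.

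Now take any rooted non-singleton $Q$ with root $r_Q$, and the open surjection $f:Q\surj\Sigma$ sending $r_Q$ to $0$ and everything else to $1$. Since $j_\Sigma=\mathrm{id}$, you get $j_Q(Q\setminus\set{r_Q})=f\inv j_\Sigma(\set{1})=Q\setminus\set{r_Q}$. By your own recovery formula, $r_Q\in S_Q$. Hence every rooted frame lies in $\mc S$ and $j=\mathrm{id}$.

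With this one line your proof is complete, and it avoids the Rieger--Nishimura lattice altogether. It is essentially a clean version of the site-level argument the paper only sketches in the remark after the theorem. The paper's own proof works inside $H[\ms x]$: it uses that $\varphi(1)=1$ iff $\ms x\le\varphi$, and that monotonicity fails for any $\varphi\ge\ms x\vee\neg\ms x$ other than $1$.
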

\begin{proof}
  We first show the only polynomials in $H[\ms x]$ satisfying (i--iii) in~\cref{lttopology} are $\ms x,\neg\neg \ms x$ and the constant polynomial $1$ in $H[\ms x]$. Thus, besides $\mc K$ itself and the trivial topos, $\mc K$ only has one non-trivial subtopos, which is the subtopos of $\neg\neg$-sheaves $\mc K_{\neg\neg} \hook \mc K$.
  
  To classify polynomials in $H[\ms x]$ satisfying (i--iii), we recall from~\cite{nishimura1960formulas} that $H[\ms x]$ can be explicitly described by the following Hasse diagram,
    \[\begin{tikzcd}[column sep = 0ex, row sep = small]
    	&& 1 & \\
    	& \vdots && \vdots \\
    	{\neg\neg\ms x \to \ms x} && {\neg\ms x \vee \neg\neg\ms x} \\
    	& {\ms x \vee \neg\ms x} && {\neg\neg\ms x} \\
    	{\neg\ms x} && {\ms x} \\
    	& 0
    	\arrow[from=2-2, to=1-3]
    	\arrow[from=2-4, to=1-3]
    	\arrow[from=3-1, to=2-2]
    	\arrow[from=3-3, to=2-2]
    	\arrow[from=3-3, to=2-4]
    	\arrow[from=4-2, to=3-1]
    	\arrow[from=4-2, to=3-3]
    	\arrow[from=4-4, to=3-3]
    	\arrow[from=5-1, to=4-2]
    	\arrow[from=5-3, to=4-2]
    	\arrow[from=5-3, to=4-4]
    	\arrow[from=6-2, to=5-1]
    	\arrow[from=6-2, to=5-3]
    \end{tikzcd}\]
  In particular, the order on the polynomials reflects their action on Heyting algebras: if $\varphi \le \psi$ in $H[\ms x]$, then for any Heyting algebra $A$ and $a\in A$, $\varphi(a) \le \psi(a)$. This way, first note that for any $\varphi\in H[\ms x]$, $\varphi(1) = 1$ iff $\ms x \le \varphi$ in $H[\ms x]$. Now if $\ms x \vee \neg\ms x \le \varphi$, we show $\varphi$ does not preserve conjunction, unless $\varphi$ is the constant $1$. In fact, we show it cannot be \emph{monotone}. Firstly, we observe that
  \[ (\ms x \vee \neg\ms x)(0) = 0 \vee \neg 0 = 1, \]
  which implies that $\varphi(0) = 1$ for all $\ms x \vee \neg\ms x \le \varphi$. However, since $0 \le \ms x$, $\varphi(\ms x) = \varphi$, which implies $\varphi$ is \emph{not} monotone if $\varphi \neq 1$. This way, besides $1$, the only possible choices of $\varphi$ that $\ms x \le \varphi$ and $\ms x \vee \neg\ms x \not\le \varphi$ are $\ms x$ and $\neg\neg \ms x$. 
\end{proof}

\begin{remark}
    It is also possible to provide a direct proof of~\cref{thm:subtoposofK} via considering the Grothendieck topologies finer than the canonical topology $J$ introduced in~\cref{topologyonKp}. For this one may first show that any topology $K$ finer than $J$ is completely characterised by which open inclusions it contains: for any sieve $S$ on $P$, let $Q\hook P$ be the joint image of $S$ in $P$, and $S$ is a $K$-covering iff $Q \hook P$ is a $K$-covering. Now if the only open inclusions that are $K$-coverings are the identities, $K = J$. If $K$ contains an open inclusion $Q \hook P$ such that $Q$ does not contain all maximal points of $P$, then by looking at the pullback of $1 \to P$ outside the image of $Q$, we conclude that $K$ contains the empty covering on $1$; this corresponds to the degenerate subtopos. The only remaining case is when $K$ contains some, equivalently all, non-trivial open inclusions that contain all the maximal points; this corresponds to the $\neg\neg$-topology. However, writing down the details of the above proof sketch requires a lot more combinatorial arguments on finite posets, and we find our algebraic proof in~\cref{thm:subtoposofK} much cleaner.
\end{remark}

\section{The K-topos as an exact completion}\label{sec:exactcomp}

In this last section we will prove one of the main results in this paper, which is that $\mc K$ is the ex/reg-completion of the dual category of profinite Heyting algebras. Here the ex/reg-completion is the universal embedding of a regular category into an exact category (cf.~\cite{RN769}). For convenience, in this section we will mainly work with the dual category $\IKp$. We will first show that the inclusion $\IKp \hook \mc K$ identifies it as a reflective subcategory closed under small coproducts and image factorisation. In particular, $\IKp$ will be an infinitary extensive regular category, and these structures are created by the inclusion $\IKp \hook \mc K$. We will then show that the additional objects in $\mc K$ can all be written as effective quotients in $\IKp$, thus delivering the main result. 

\begin{remark}
    Before we proceed to the proof, it is instructive to mention that the result in this section is somewhat surprising from a categorical perspective. The phenomenon that the exact completion of a category forms a topos is not new to category theory, and in fact lies at the heart of the construction of realisability topoi. These constructions follow a general pattern, e.g. as shown in~\cite{menni2000exact} that the ex/reg-completion of any locally cartesian closed regular category with a generic mono forms a topos. However, $\IKp$ is \emph{not} locally cartesian closed. In fact it cannot be cartesian closed. For this recall from~\cref{subobjectasanerve} that $\IKp$ has a subobject classifier. It is also complete and cocomplete as it is a reflective subcategory of $\mc K$ (cf.~\cref{IKpreflective}). Hence, if $\IKp$ were cartesian closed, then it will in particular be an \emph{elementary topos}, which implies it will be exact. As we will see in~\cref{notexact}, this is not the case, and we do need to formally add all the missing quotients to $\IKp$ to obtain a topos, in this case the K-topos $\mc K$.
\end{remark}

We first show $\IKp$ is closed under coproducts:

\begin{lemma}\label{IKppreservescoproducts}
  $\IKp$ is closed under small coproducts in $\mc K$, hence it is infinitary extensive.
\end{lemma}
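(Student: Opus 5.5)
We need to show: for a small family $\set{X_i}_{i\in I}$ in $\IKp$, the coproduct $\coprod_i X_i$ computed in $\mc K$ is again (the nerve of) an image finite poset, namely the disjoint union $\coprod_i X_i$ of posets. Infinitary extensiveness of $\IKp$ then follows at once, because coproducts in the topos $\mc K$ are disjoint and pullback-stable, and $\IKp \hook \mc K$ is a full subcategory closed under limits (it is reflective).

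Step 1: the disjoint union $X=\coprod_i X_i$ of image finite posets is image finite, and each coprojection $X_i \hook X$ is open because $X_i$ is an upset. So $X\in\IKp$, and these coprojections form a coproduct cocone in $\IKp$. Step 2: compare the nerve $N X$ with the sheaf coproduct $\coprod_i NX_i$. There is a canonical comparison map $\coprod_i NX_i \to NX$, and I will show it is an isomorphism of sheaves. By \cref{comparisonK} it suffices to evaluate on rooted $P\in\Kpp$. Since $P$ is rooted, any open map $P \to X$ has connected image, so it lands in a single summand $X_i$. Hence
\[ NX(P) \cong \IKp(P,X) \cong \coprod_i \IKp(P,X_i) \cong \coprod_i NX_i(P) \]
naturally in $P\in\Kpp$. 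So $NX$ equals the \emph{presheaf} coproduct of the $NX_i$ restricted to $\Kpp$. Because $\mc K$ is supercompactly generated and the site $(\Kpp,J)$ is principal (\cref{supercompact}), one checks directly that this presheaf coproduct is already a sheaf: a covering sieve on rooted $P$ contains a surjection $Q\surj P$ with $Q$ rooted (after refining along $\dv q$ for a preimage $q$ of the root), and a compatible family over it lies in one summand. Alternatively, the bijection above already exhibits $NX$, which is a sheaf, as the pointwise coproduct, so sheafification does nothing. Either way $\coprod_i NX_i\cong NX$ in $\mc K$.

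Step 3 is extensivity. In $\mc K$, coproducts are disjoint and universal. Pullbacks in $\IKp$ agree with pullbacks in $\mc K$, and coproducts agree by Step 2, so the extensivity conditions transfer verbatim to $\IKp$.

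The main point requiring care is Step 2. Specifically, I must justify that testing on rooted posets suffices (via \cref{comparisonK}) and that open maps out of rooted posets factor through a single summand. This uses that a rooted finite poset is connected and that the summands $X_i$ are upsets. The rest is formal.
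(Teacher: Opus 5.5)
Your proposal is correct and is essentially the paper's proof. Both rest on the observation that an open map from a rooted $P\in\Kpp$ into the disjoint union $\bigsqcup_i X_i$ lands in a unique summand, which gives $\IKp(P,\bigsqcup_i X_i)\cong\coprod_i\IKp(P,X_i)$ naturally on $\Kpp$. The paper then cites \cref{unionofsub} to know that coproducts in $\mc K$ are computed pointwise on $\Kpp$; your second justification reaches the same conclusion without that citation (the pointwise coproduct is isomorphic to the sheaf $NX$ restricted to $\Kpp$, so it is already a sheaf and hence the coproduct in $\mc K$), and your extensivity step spells out what the paper leaves implicit.
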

\begin{proof}
  Here we use the site presentation $(\Kpp,J)$ of $\mc K$. Suppose we have a family $\set{X_i}_{i\in I}$ in $\IKp$. Let $\bigsqcup_{i\in I}X_i$ be their disjoint union. It is easy to see that this is their coproduct in $\IKp$. Now for any $P\in\Kpp$,
  \[ \IKp(P,\bigsqcup_{i\in I}X_i) \cong \coprod_{i\in I}\IKp(P,X_i), \]
  since $P$ by definition is rooted, thus any map $P \to \bigsqcup_{i\in I}X_i$ factors through a unique $X_i$. By~\cref{unionofsub}, coproducts in $\mc K$ are computed pointwise on $\Kpp$, thus this shows the disjoint union $\bigsqcup_{i\in I}X_i$ is exactly the coproduct $\coprod_{i\in I}X_i$ in $\mc K$, hence $\IKp$ is closed under coproducts in $\mc K$.
\end{proof}

For image factorisation, we first show that epis in $\IKp$ and $\mc K$ agree:

\begin{lemma}\label{IKppreservesregularepi}
  The inclusion $\IKp \hook \mc K$ preserves and reflects epis. In particular, $\IKp$ is closed under image factorisation in $\mc K$, hence is a regular category. 
\end{lemma}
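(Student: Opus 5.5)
The plan is to prove both directions through a concrete description of epis on each side: a map in $\IKp$ is an epi iff it is a surjective open map, and a map of sheaves between objects of $\IKp$ is an epi in $\mc K$ iff it is surjective on underlying posets. Since $\IKp \hook \mc K$ is fully faithful, any epi in $\mc K$ between objects of $\IKp$ is automatically an epi in $\IKp$. The real content is therefore:

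1. an epi in $\IKp$ is surjective;
2. a surjective open map is an epi in $\mc K$.

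For (1), let $f : X \to Y$ be an epi in $\IKp$ and form its cokernel pair $Y +_X Y$. As observed in the proof of \cref{nerveforposet}, the inclusion $\IKp \inj \Pos$ preserves all colimits, so this pushout can be computed in $\Pos$. There it is the quotient of $Y \sqcup Y$ by the order-congruence generated by identifying the two copies of each $f(x)$. If some $y$ lay outside the image of $f$, its two copies would remain distinct, and the two coprojections would differ. This contradicts $f$ being epi, so $f$ is surjective. The same cokernel-pair argument applied in $\mc K$ is not needed, since surjectivity there will come directly from (3) below.

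For (2) I use the principal site $(\Kpp,J)$ from \cref{supercompact}, over which epis in $\mc K$ are exactly the locally surjective maps. Concretely, for every rooted $P$ and every open map $y : P \to Y$, I must find an open surjection $q : Q \surj P$ with $Q$ finite and an open map $x : Q \to X$ with $fx = yq$. Let $r$ be the root of $P$, choose $x_0 \in X$ with $f x_0 = y r$, and set
\[ Q := \dv(r,x_0) \subseteq P \otimes_Y X, \]
the up-set of $(r,x_0)$ in the pullback computed in $\Pos$. This $Q$ is finite because $\dv x_0$ is finite, and it is rooted. I expect this step to be the main obstacle, namely checking that the projection $Q \to P$ is open and surjective. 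Openness follows from the argument of \cref{monoidalpullback}, which only uses openness of $f$; the lift $(p',x')$ it produces stays in $Q$ because $Q$ is an up-set. Surjectivity follows because the image of $Q$ is an up-set of $P$ containing the root. The projection $Q \to X$ is open for the same reason, so $Q$ provides the required local lift.

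Two further points complete the argument. (3) Conversely, if $f$ is an epi in $\mc K$, then for any $y \in Y$ I apply local surjectivity to the open inclusion $\dv y \hook Y$, which yields some $x$ with $f x = y$; so $f$ is surjective. (4) Finally, for the image factorisation of $f$ in $\mc K$, the image is a subobject of $Y$, hence lies in $\IKp$ by \cref{closedundersubobjects}. Regularity of $\IKp$ then follows from three facts: $\IKp$ is closed under finite limits in $\mc K$ (it is a reflective subcategory), it is closed under image factorisation, and epis in $\IKp$ and $\mc K$ coincide. Together these give that regular epis in $\IKp$ are pullback-stable, because they are so in the topos $\mc K$.
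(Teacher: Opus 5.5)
Your proof is correct, and its central step is the paper's own construction. In the paper's notation ($f : Y \surj X$, $x : P \to X$, $fy = x(*)$), your $\dv(r,x_0)\subseteq P\otimes_Y X$ is exactly the paper's $P\otimes_{\dv x(*)}\dv y$. This is because every point of $P$ lies above the root, so the up-set condition only constrains the second coordinate.

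You differ from the paper only in the surrounding steps.

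\begin{itemize}
\item \emph{Epis are surjective.} The paper simply asserts that epis in $\IKp$ are surjective. You actually prove it, via the cokernel pair computed in $\Pos$, using the colimit-preservation fact from the proof of \cref{nerveforposet}.
\item \emph{Image factorisation.} The paper builds the set-theoretic image explicitly and uses that surjections are epis in $\mc K$. You get it at once from \cref{closedundersubobjects}. Your route is shorter and does not depend on the epi characterisation at all. The paper's route has the advantage of identifying the factorisation concretely as a surjection followed by an open embedding.
\item \emph{Step (3).} This step is redundant given full faithfulness, though harmless.
\item \emph{Regularity.} Your last sentence needs one more line to be airtight. An epi of $\IKp$ is regular in $\IKp$, because it is the coequaliser in $\mc K$ of its kernel pair, and that kernel pair lies in $\IKp$. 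Hence regular epis of $\IKp$ are exactly the maps that are epi in $\mc K$, and their pullback-stability transfers.
\end{itemize}
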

\begin{proof}
  Since $\IKp \hook \mc K$ is fully faithful, it reflects epis. An epi in $\IKp$ is simply a surjective map $f : Y \surj X$. In this case we show it is also an epi in $\mc K$, which means locally we can find sections of $f$. Let $P\in\Kpp$ and let $x : P \to X$ be a map. To find a local section of $f$ at $P$, we may factor $x$ into $P \surj \dv x(*) \hook X$ as below. Since $f$ is surjective, we can find $y\in Y$ that $fy = x(*)$. This way we obtain the following diagram,
  \[\begin{tikzcd}
    {P \otimes_{\dv x(*)}\dv y} & {\dv y} & Y \\
    P & {\dv x(*)} & X
    \arrow[two heads, from=1-1, to=1-2]
    \arrow[two heads, from=1-1, to=2-1]
    \arrow[hook, from=1-2, to=1-3]
    \arrow["f", two heads, from=1-2, to=2-2]
    \arrow["f", two heads, from=1-3, to=2-3]
    \arrow[two heads, from=2-1, to=2-2]
    \arrow[hook, from=2-2, to=2-3]
  \end{tikzcd}\]
  which shows $f$ admits local sections, thus is an epimorphism in $\mc K$. Now given $g : Y \to X$ in $\IKp$, let $Z$ be the set-theoretic image of $g$. Since $g$ is open, it is factored into a surjection followed by an open embedding in $\IKp$,
  \[
  \begin{tikzcd}
    Y \ar[r, two heads, "h"] & Z \ar[r, hook] & X
  \end{tikzcd}
  \]
  Since $h$ is also an epi in $\mc K$, this will also be the image factorisation in $\mc K$.
\end{proof}

Furthermore, we show filtered colimits in $\IKp$ also agree with those in $\mc K$.

\begin{lemma}\label{IKppreservefiltercolim}
    $\IKp \hook \mc K$ preserves filtered colimits, i.e. $\IKp$ is an accessible reflective subcategory of $\mc K$.
\end{lemma}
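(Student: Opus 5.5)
We need to show that a filtered colimit of objects of $\IKp$, computed in $\IKp$, is also the colimit in $\mc K$. The plan is to compute everything pointwise on the site and reduce to finite presentability of representables.

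First, filtered colimits in $\mc K$. Each representable $P\in\Kp$ is a finite poset, and $J$-covering sieves on $P$ can be refined to finite jointly surjective families (\cref{leftexactfuncaresheaf}). So the sheaf condition at $P$ is a finite limit condition. Finite limits commute with filtered colimits in $\Set$, hence the pointwise filtered colimit of $J$-sheaves is again a $J$-sheaf. Thus for a filtered diagram $X_i$ in $\mc K$,
\[ (\ct_i X_i)(P) \cong \ct_i X_i(P) \]
for every $P\in\Kp$. In other words, the inclusion $\mc K\hook[\Kp\op,\Set]$ preserves filtered colimits; this also makes each representable finitely presentable in $\mc K$.

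Second, filtered colimits in $\IKp$. We have $\IKp\simeq\Ind(\Kp)\simeq\Lex(\Kp\op,\Set)$ (\cref{IKpreflective}), and filtered colimits of left exact functors $\Kp\op\to\Set$ are computed pointwise, again because filtered colimits commute with finite limits in $\Set$. Hence for a filtered diagram $X_i$ in $\IKp$, its colimit $X$ in $\IKp$ satisfies
\[ \IKp(P,X)\cong\ct_i\IKp(P,X_i) \]
for each $P\in\Kp$. This is just the statement that objects of $\Kp$ are finitely presentable in $\Ind(\Kp)$. Concretely, $X$ is the union along the diagram, and a finite rooted poset mapping into it factors through some stage.

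Comparing the two computations, the nerve of $X$ agrees pointwise with the colimit in $\mc K$ of the nerves of the $X_i$. So the inclusion $N:\IKp\hook\mc K$ preserves filtered colimits. Since $\IKp$ is locally finitely presentable and $\mc K$ is locally presentable, $N$ is an accessible reflective embedding.

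The only step needing care is the first, checking that pointwise filtered colimits of sheaves remain sheaves. The key input is that coverings can be taken finite; the same fact already underlies \cref{leftexactfuncaresheaf}.
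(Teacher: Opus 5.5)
Your second step is correct and is exactly the paper's key point: under $\IKp\simeq\Lex(\Kp\op,\Set)$, filtered colimits are computed pointwise. The gap is in your first step, and your final comparison depends on it.

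You claim that the sheaf condition for a finite jointly surjective family $\set{f_k : Q_k \to P}$ is a finite limit condition. That would hold if $\Kp$ had pullbacks, but it does not. A matching family must satisfy $F(a)x_k = F(b)x_l$ for \emph{every} pair of open maps $a : R \to Q_k$, $b : R \to Q_l$ with $f_ka=f_lb$, over all finite $R$. So the limit is taken over the whole infinite sieve, not over a finite diagram.

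The monoidal pullback $Q_k\otimes_P Q_l$ does not cut this down to finitely many equations. The induced map $R\to Q_k\otimes_P Q_l$ is only monotone, not open. Already the diagonal $\Sigma\to\Sigma\otimes\Sigma$, which arises from $R=Q_k=Q_l=\Sigma$, $P=1$, $a=b=\mathrm{id}$, is not open. \cref{leftexactfuncaresheaf} avoids this problem only because $F$ is assumed left exact.

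For an arbitrary sheaf, showing that finitely many compatibility conditions suffice is a genuinely combinatorial statement. It is essentially the claim that the representables are finitely presentable in $\mc K$. Your argument does not establish it, so the claim that all filtered colimits in $\mc K$ are computed pointwise is unjustified.

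You do not need that claim. Let $G$ be the pointwise colimit of the $NX_i$. It is a filtered colimit of left exact functors, hence left exact, hence a $J$-sheaf by \cref{leftexactfuncaresheaf}. Since $\mc K$ is a full subcategory of $[\Kp\op,\Set]$, a presheaf colimit of sheaves that is itself a sheaf is also their colimit in $\mc K$. Combined with your second step, this gives $G\cong NX$, which is precisely the paper's proof.
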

\begin{proof}
    By~\cref{filteredIKp}, as an ind-completion $\IKp \simeq \Lex(\Kp\op,\Set)$, filtered colimits in $\IKp$ are computed pointwise as functors. Thus, this agrees with the filtered colimits in the presheaf topos $[\Kp\op,\Set]$, hence also in $\mc K$.
\end{proof}

Thus, $\IKp$ is an infinitary extensive regular category. If $\IKp$ were also \emph{exact}, then by Giraud's theorem $\IKp$ would be a topos since it has a small generating family, say $\Kp$. However, $\IKp$ is \emph{not} exact. The crucial point here is that given an equivalence relation $R \rightrightarrows X$ in $\IKp$, its coequaliser in $\IKp$ is \emph{not} computed as that in $\mc K$. This means $\IKp$ is missing some quotients in $\mc K$. This can already be seen in the fact that the Yoneda embedding $\Kp \hook \mc K$ does not preserve coequalisers:

\begin{remark}[$\IKp$ is not exact]\label{notexact}
  We first show that the Yoneda embedding $\Kp \hook \mc K$ does not preserve coequalisers, via the criterion given in~\cite[Prop. 2.4.8]{RN900}. Consider two maps $f,g : \Sigma \to \Sigma^2$ where $f(0) = (0,1)$ and $g(0) = (1,0)$, with $f(1) = g(1) = (1,1)$. Now the two maps are open, and their coequaliser in $\Kp$ is $P = \set{0 < x < 1}$ with the evident projection $h : \Sigma^2 \surj P$. $\Sigma \rightrightarrows \Sigma^2 \surj P$ is a coequaliser in $\mc K$ iff for any pair of maps $u,v : Q \rightrightarrows \Sigma^2$ that $hu = hv$, there is a surjection $Q' \surj Q$ that factors $u,v$ as below,
  \[
  \begin{tikzcd}
    Q' \ar[d, dashed] \ar[r, two heads, dashed] & Q \ar[d, shift right, "u"'] \ar[d, shift left, "v"] \\
    \Sigma \ar[r, shift left, "f"] \ar[r, shift right, "g"'] & \Sigma^2 \ar[r, two heads] & P
  \end{tikzcd}
  \]
  However, note that the element $(0,0)\in\Sigma^2$ is not in the joint image of $f,g$. As long as $u,v$ map some element in $Q$ to $(0,0)$, such factorisation cannot exist. For instance, $Q$ can be taken to be $\Sigma^2$ itself, with $u$ the identity and $v$ the map interchanging $(0,1)$ and $(1,0)$. 

  Now let $R$ be the smallest equivalence relation on $\Sigma^2$ in $\IKp$ that contains the pair $\pair{f,g}$. The computation of $R$ in $\IKp$ agrees with that in $\mc K$, since $\IKp$ is closed under limits and also subobjects by~\cref{closedundersubobjects}. Furthermore, the generation of the smallest equivalence relation does not change the computation of coequaliser in any category. This way, $P$ will remain as the coequaliser of $R$ in $\IKp$, and we show that $R$ is not effective, i.e.\ the kernel pair of $P$ cannot be $R$. For this let $P'$ be the coequaliser of $R$ in $\mc K$. By the argument above, we have a non-trivial quotient $P' \surj P$ in $\mc K$. Since $\mc K$ as a topos is exact, $R$ is the kernel pair of $P'$, which shows that $R$ is strictly smaller than the kernel pair of $P$.
  
  We also mention that the argument showing $\IKp$ is not exact can also be done directly, without referring to the topos $\mc K$, following a similar argument as in~\cite{de2025finite}. However, finite product of image finite posets in $\IKp$ is hard to describe explicitly, as it involves a further transfinite process after taking the cartesian product of two posets as shown in~\cite{almeida2024colimits}. Thus, explicitly writing down the generated equivalence relation $R$ requires some non-trivial computation, and we find our categorical argument minimal for the task.
\end{remark}

To help the reader build intuition on ex/reg-completion, we here mention a more well-known instance.

\begin{remark}[Intuition of ex/reg-completion]
    Let $\Stone$ and $\KHaus$ be the category of Stone spaces and compact Hausdorff spaces, respectively. The canonical inclusion $\Stone \hook \KHaus$ is again an ex/reg-completion (cf.~\cite{marra2020characterisation}). Though $\Stone$ is cocomplete, not all equivalence relations have effective quotients. For instance, the closed interval $[0,1]$ admits a close surjection from the Cantor space $2^{\mbb N} \surj [0,1]$, whose kernel pair $R \rightrightarrows 2^{\mbb N}$ will be an equivalence relation in $\Stone$. However, the coequaliser of $R \rightrightarrows 2^{\mbb N}$ in $\Stone$ is the singleton space $1$, since $[0,1]$ is connected. This implies that this equivalence relation in $\Stone$ is not effective. By adding effective quotients to all equivalence relations in $\Stone$, one obtains all compact Hausdorff spaces. 
\end{remark}

Similarly, though $\IKp$ is cocomplete, not all equivalence relations in $\IKp$ have effective quotients. By adding these missing quotients to $\IKp$, one obtains the K-topos $\mc K$. To show this, we first show the following lemma, which is a general observation about ex/reg-completion that appears to be new to us:

\begin{lemma}\label{lem:subobjectexreg}
    Let $\mc C$ be a regular category, and let $\iota : \mc C \hook \mc D$ be a fully faithful regular embedding into an exact category, such that $\mc C$ is closed under subobjects in $\mc D$. If every object in $\mc D$ admits an epimorphism from $\iota X$ for some $X\in \mc C$, $\mc D$ is the ex/reg-completion of $\mc C$.
\end{lemma}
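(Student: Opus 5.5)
The plan is to build an exact functor $\ov\iota : \mc C\exreg \to \mc D$ out of the universal property of the ex/reg-completion, and then show that $\ov\iota$ is an equivalence. Because $\iota$ is a regular functor into an exact category, it extends, essentially uniquely, to an exact functor $\ov\iota$ whose restriction along $\mc C \hook \mc C\exreg$ is $\iota$. Concretely, $\ov\iota$ sends an object of $\mc C\exreg$, that is, an equivalence relation $R \rightrightarrows X$ in $\mc C$, to the coequaliser of $\iota R \rightrightarrows \iota X$ in $\mc D$. It then suffices to prove that $\ov\iota$ is essentially surjective and fully faithful.

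\emph{Essential surjectivity.} Fix $D \in \mc D$. By hypothesis there is an epimorphism $e : \iota X \surj D$ with $X \in \mc C$. Since $\mc D$ is exact, hence regular, $e$ is a regular epi and is the coequaliser of its kernel pair $K \rightrightarrows \iota X$. The kernel pair $K$ is a subobject of $\iota X \times \iota X$. Because $\iota$ is regular it preserves finite limits, so $\iota X \times \iota X \cong \iota(X \times X)$. Closure of $\mc C$ under subobjects then gives $K \cong \iota R$ for some subobject $R \inj X \times X$ in $\mc C$. Since $\iota$ is fully faithful and preserves finite limits, it reflects reflexivity, symmetry and transitivity, so $R$ is an equivalence relation in $\mc C$. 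Hence $D \cong \ov\iota(X,R)$.

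\emph{Full faithfulness.} This is the step I expect to be the main obstacle, because one has to match morphisms rather than objects. I would use the standard description of morphisms in an exact category. Write $X/R$ for the quotient. A morphism $X/R \to Y/S$ corresponds bijectively to a relation $F \inj X \times Y$ that is total, is compatible with $R$ and $S$ ($R \circ F \circ S \le F$), and is functional modulo $S$ ($F\op \circ F \le S$); the correspondence sends a morphism to the pullback of its graph along the quotient maps. In $\mc C\exreg$ the hom-sets are given by exactly such relations in $\mc C$. In $\mc D$, a morphism $\ov\iota(X,R) \to \ov\iota(Y,S)$ corresponds to such a relation $F \inj \iota X \times \iota Y \cong \iota(X \times Y)$. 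By closure under subobjects, $F \cong \iota F_0$ for some $F_0 \inj X \times Y$. The conditions on $F$ are expressed by relational composition, which uses only finite limits and images. Since $\iota$ is fully faithful and preserves and reflects both, $F$ satisfies these conditions in $\mc D$ if and only if $F_0$ satisfies them in $\mc C$. Thus $\iota$ induces a bijection between the relevant relations, and this bijection is precisely the action of $\ov\iota$ on hom-sets.

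The delicate points are to check that this bijection really agrees with $\ov\iota$ on morphisms, and that image formation in $\mc D$ of a composite relation lands back in $\mc C$. The second is fine: such an image is a subobject of $\iota(X \times Y)$, and $\iota$ preserves images because it is regular. Combining essential surjectivity with full faithfulness, $\ov\iota$ is an equivalence, so $\mc D \simeq \mc C\exreg$.
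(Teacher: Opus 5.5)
Your proof is essentially the paper's: the paper also gets essential surjectivity by transporting the kernel pair of a cover $\iota X \surj A$ back into $\mc C$ via closure under subobjects, and full faithfulness by matching functional relations in $\mc D$ with those in $\mc C$. It justifies the description of maps in $\mc D$ through idempotence, $\mc D\simeq\mc D\exreg$, which is your `standard description'. The check you leave open is immediate, because the exact functor $\ov\iota$ preserves the pullback of a graph along the quotient maps.

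One correction: exactness does not make every epi regular (e.g.\ $\mathbb N\to\mathbb Z$ in the exact category of monoids). The hypothesis must therefore be read as supplying a \emph{regular} epimorphism. The paper's proof tacitly assumes this too, and it is automatic in the application, where $\mc D=\mc K$ is a topos.
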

\begin{proof}
    According to~\cite[Def. 11]{RN769}, $\mc C\exreg$ can be concretely described as follows:
    \begin{itemize}
        \item objects are equivalence relations $R \hook X \times X$ in $\mc C$;
        \item a morphism $f : (X,R) \to (Y,T)$ is a relation $f \inj X \times Y$ that is functional for the equivalence classes on $R$ and $T$.
    \end{itemize}
    By the universal property of ex/reg-completion, there is a uniquely induced exact functor $\mc C\exreg \to \mc D$, which takes a pair $(X,R)$ to the coequaliser of $\iota R \rightrightarrows \iota X$ in $\mc D$. By assumption, for any $A\in\mc D$ there exists an epi $\iota X \surj A$ in $\mc D$. Its kernel pair $B \hook \iota X \times \iota X \cong \iota(X \times X)$ by assumption comes from an equivalence relation $R \hook X \times X$, thus $\mc C\exreg \to \mc D$ must be essentially surjective. 
    
    To show fully faithfulness, let $A,B$ be the images of $(X,R)$ and $(Y,T)$, respectively. We claim that maps between $A,B$ in $\mc D$ exactly correspond to relations from $\iota X$ to $\iota Y$ that are functional w.r.t.\ the equivalence relations $\iota R$ and $\iota T$. Note that fully faithfulness follows from this claim and the concrete description of morphisms in $\mc C\exreg$ above. This is because by assumption, subobjects of $\iota X \times \iota Y \cong \iota(X \times Y)$ in $\mc D$ are in bijection with subobjects of $X \times Y$ in $\mc C$, and that being functional w.r.t.\ two equivalence relations is preserved and reflected by a fully faithful regular functor. But the claim on maps between $A,B$ follows easily from the fact that the ex/reg-completion is \emph{idempotent} (cf.~\cite{RN769}). In particular, $\mc D \simeq \mc D\exreg$ since $\mc D$ is exact, and the inverse $\mc D\exreg \to \mc D$, which is induced by universal property from the fact that $\mc D$ is exact, again sends the pair $(\iota X,\iota R)$ and $(\iota Y,\iota T)$ to $A,B$, respectively. Hence, the desired claim again holds by the description of morphisms in the ex/reg-completion above.
\end{proof}

\begin{theorem}\label{maintheorem}
    The inclusion $\IKp \hook \mc K$ identifies $\mc K$ as the ex/reg-completion of $\IKp$.
\end{theorem}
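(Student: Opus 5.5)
We plan to apply \cref{lem:subobjectexreg} with $\mc C = \IKp$, $\mc D = \mc K$ and $\iota$ the nerve inclusion $\IKp \hook \mc K$. This reduces the theorem to checking its hypotheses one at a time, and nearly all of them are already established earlier in the paper.

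First, $\mc K$ is a Grothendieck topos and hence exact. Second, $\IKp$ is regular, and the inclusion is a fully faithful regular functor:
\begin{itemize}
    \item full faithfulness is \cref{IKpreflective};
    \item the inclusion is a right adjoint, so it preserves finite limits;
    \item it preserves regular epis, since by \cref{IKppreservesregularepi} epis in $\IKp$ are exactly the surjections and these stay epi in $\mc K$, with image factorisations created by the inclusion.
\end{itemize}
Third, closure of $\IKp$ under subobjects in $\mc K$ is \cref{closedundersubobjects}.

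The only remaining hypothesis is that every sheaf $F \in \mc K$ admits an epimorphism from some object of $\IKp$. For this we will use that representables generate $\mc K$. Every $F$ is a colimit of representables, so the canonical map
\[ \coprod_{P\in\Kpp,\ x\in F(P)} \yon P \to F \]
is epic, as in any sheaf topos. Each $\yon P$ lies in $\IKp$, because finite posets are image finite and the Yoneda embedding factors through $\Kp \hook \IKp \hook \mc K$. By \cref{IKppreservescoproducts}, the coproduct in $\mc K$ is the disjoint union $\bigsqcup P$, which is again an object of $\IKp$. This produces the required epi $\iota X \surj F$ with $X = \bigsqcup_{(P,x)} P$.

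Since all hypotheses hold, \cref{lem:subobjectexreg} yields $\mc K \simeq \IKp\exreg$ via the inclusion. The one step needing care is the covering step: the coproduct has to be computed inside $\IKp$ and agree with the one in $\mc K$. This is exactly what \cref{IKppreservescoproducts} secures, using rootedness over the site $(\Kpp,J)$; a general reflective subcategory need not be closed under coproducts. A smaller point to double-check is that the regular structure of $\IKp$ is the one inherited from $\mc K$, so that \enquote{regular embedding} is meaningful. That is the content of \cref{IKppreservesregularepi} together with closure under limits.
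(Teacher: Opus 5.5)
Your proposal is correct and matches the paper's proof: reduce to \cref{lem:subobjectexreg} using \cref{closedundersubobjects}, then cover each sheaf $F$ by $\coprod_{x\in F(P),P\in\Kpp}P \surj F$, which lies in $\IKp$ by \cref{IKppreservescoproducts}. Your explicit check that $\IKp \hook \mc K$ is a fully faithful regular embedding just spells out hypotheses that the paper leaves implicit from \cref{IKpreflective} and \cref{IKppreservesregularepi}.
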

\begin{proof}
    By~\cref{closedundersubobjects}, $\IKp$ is closed under subobjects in $\mc K$, thus by~\cref{lem:subobjectexreg} it suffices to show for any $F\in\mc K$, there exists $X\in\IKp$ that $X \surj F$. Given an arbitrary sheaf $F\in\mc K$, we can easily construct a quotient as below,
    \[ \coprod_{x\in F(P),P\in\Kpp}P \surj F. \]
    This map is an epimorphism since for any local section $x : P \to F$ for $P\in\Kpp$, it lifts to the domain by construction,
    \[
    \begin{tikzcd}
        & \coprod_{x\in F(P),P\in\Kpp} P \ar[d] \\ 
        P \ar[r, "x"'] \ar[ur, dashed] & F
    \end{tikzcd}
    \]
    By~\cref{IKppreservescoproducts}, the coproduct indeed belongs to $\IKp$, which completes the proof.
\end{proof}

\vspace{.3pt}
\subsection*{Acknowledgements}

We are thankful to \emph{Rodrigo Nicolau Almeida} for discussing with us an early draft of this work. We are thankful to \emph{Matteo De Berardinis} for letting us know an error in an early draft of this paper. We are thankful to \emph{Matías Menni} for some useful comments and also for letting us know an error we made in a first version of this paper. We are also thankful to two anonymous referees, whose comments improved the readability and the accuracy of this paper.

\subsection*{Funding}

The author has no relevant funding information to disclose.

\bibliography{mybib}
\bibliographystyle{alpha}

\end{document}